\renewcommand{\epsilon}{\varepsilon}
\newcommand{\tops}[2]{\texorpdfstring{#1}{#2}}
\newcommand{\qqand}{\qquad\text{and}\qquad}
\def\Xint#1{\mathchoice
   {\XXint\displaystyle\textstyle{#1}}%
   {\XXint\textstyle\scriptstyle{#1}}%
   {\XXint\scriptstyle\scriptscriptstyle{#1}}%
   {\XXint\scriptscriptstyle\scriptscriptstyle{#1}}%
   \!\int}
\def\XXint#1#2#3{{\setbox0=\hbox{$#1{#2#3}{\int}$}
     \vcenter{\hbox{$#2#3$}}\kern-.5\wd0}}
\def\dint{\Xint-}
\renewcommand{\Re}{\operatorname{Re}}
\let\originalleft\left
\let\originalright\right
\renewcommand{\left}{\mathopen{}\mathclose\bgroup\originalleft}
\renewcommand{\right}{\aftergroup\egroup\originalright}
\newcommand{\pfr}[2]{\left(\frac{#1}{#2}\right)}
\newcommand{\fr}[2]{\frac{#1}{#2}}
\newcommand{\nfr}[2]{#1/#2}
\newcommand{\tf}[2]{\tfrac{#1}{#2}}
\newtheorem{theorem}{Theorem}
\newtheorem{proposition}[theorem]{Proposition}
\newtheorem{lemma}[theorem]{Lemma}
\newtheorem{corollary}[theorem]{Corollary}
\theoremstyle{definition}
\newtheorem{remark}[theorem]{Remark}
\newtheorem{definition}[theorem]{Definition}
\numberwithin{theorem}{section}
\numberwithin{equation}{section}
\numberwithin{figure}{section}
\newcommand{\<}{\begin{equation}}
\renewcommand{\>}{\end{equation}}
\newcommand{\expp}[1]{\exp\left( #1 \right)}
\newcommand{\Dir}[2]{\mathfrak{D}[#1;#2]}
\newcommand{\floor}[1]{\lfloor#1\rfloor}
\newcommand{\brk}[1]{\left[#1\right]}
\renewcommand{\O}[1]{O\left(#1\right)}
\newcommand{\ldint}[1]{\dint_{#1-i\infty}^{#1+i\infty}}
\newcommand{\less}{\ll}
\newcommand{\great}{\gg}
\newcommand{\mop}[1]{(#1 \partial_{#1})}
\newcommand{\rt}[1]{\sqrt{#1}}
\let\mod\undefined
\newcommand{\mod}[1]{\,(#1)}
\renewcommand{\;}{\hspace{0.04em}} 
\renewcommand{\:}{\hspace{0.08em}} 
\renewcommand{\.}{\hspace{-0.08em}}
\newcommand{\Vmaj}{V\hspace{-0.1em}} 
\newcommand{\Wmaj}{W\hspace{-0.08em}} 
\newcommand{\PsiD}[1]{\Psi_{(#1)}}
\newcommand{\Psid}[1]{\Psi^{(#1)}}
\newcommand{\logX}{\log X}
\newcommand{\logx}{\log x}
\newcommand{\lf}{\left}
\newcommand{\lh}{\left}
\newcommand{\rh}{\right}
\newcommand{\signs}{\{0,\pm1\}}
\newcommand{\spnf}{(p(n,f))_\nn}
\newcommand{\spnA}{(p(n,A))_\nn}
\newcommand{\keep}[1]{{\binoppenalty=10000\relpenalty=10000 #1}}
\newcommand{\cc}{\mathbb{C}}
\newcommand{\nn}{\mathbb{N}}
\newcommand{\pp}{\mathbb{P}}
\newcommand{\rr}{\mathbb{R}}
\newcommand{\zz}{\mathbb{Z}}
\newcommand{\fm}{\mathfrak{m}}
\newcommand{\fz}{\mathfrak{z}}
\newcommand{\fA}{\mathscr{A}}
\newcommand{\fD}{\mathfrak{D}}
\newcommand{\fM}{\mathfrak{M}}
\newcommand{\fP}{\mathfrak{P}}
\newcommand{\fT}{\mathfrak{T}}
\newcommand{\xa}{\alpha}
\newcommand{\xb}{\beta}
\newcommand{\xd}{\delta}
\newcommand{\xe}{\epsilon}
\newcommand{\xh}{\eta}
\newcommand{\xk}{\kappa}
\newcommand{\xl}{\lambda}
\newcommand{\xm}{\mu}
\newcommand{\xn}{\nu}
\newcommand{\xq}{\theta}
\newcommand{\xr}{\rho}
\newcommand{\xs}{\sigma}
\newcommand{\xt}{\tau}
\newcommand{\vphi}{\varphi}
\newcommand{\xz}{\zeta}
\newcommand{\xG}{\Gamma}
\newcommand{\xD}{\Delta}
\newcommand{\xQ}{\Theta}
\newcommand{\xO}{\Omega}
\newcommand{\cQ}{\mathcal{Q}}
\newcommand{\cX}{\mathcal{X}}
\def\centerarc[#1](#2)(#3:#4:#5)
 \renewcommand{\email}[2][]{%
   \ifx\emails\@empty\relax\else{\g@addto@macro\emails{,\space}}\fi%
   \@ifnotempty{#1}{\g@addto@macro\emails{\textrm{(#1)}\space}}%
   \g@addto@macro\emails{#2}%
 }
   \def\MR#1{}
\def\section{%
    \@startsection{section}{1}%
    \z@{.7\linespacing\@plus\linespacing}{.5\linespacing}%
    {\normalfont\large\bfseries}%
}
\def\@seccntformat#1{%
  \protect\textup{\protect\@secnumfont
    \csname the#1\endcsname
\space\space
  }%
}
\begin{document}

\title{Bounds on the M\"obius-signed partition numbers} 
\author[T.\ Daniels]{Taylor Daniels}
\address{Purdue University, West Lafayette IN}
\email{daniel84@purdue.edu}
\subjclass[2020]{Primary: 11P55, 11P82, 11M26. \\ \indent \emph{Keywords and phrases}: partitions, M\"obius function, Liouville function.}
\begin{abstract}
    For $n \in \mathbb{N}$ let $\Pi[n]$ denote the set of partitions of $n$, i.e., the set of positive integer tuples $(x_1,x_2,\ldots,x_k)$ such that \keep{$x_1 \geq x_2 \geq \cdots \geq x_k$} and \keep{$x_1 + x_2 + \cdots + x_k = n$}. 
    Fixing $f:\mathbb{N}\to\{0,\pm 1\}$, for $\pi = (x_1,x_2,\ldots,x_k) \in \Pi[n]$ let $f(\pi) := f(x_1)f(x_2)\cdots f(x_k)$. In this way we define the {signed partition numbers} 
        \[ p(n,f) = \sum_{\pi\in\Pi[n]} f(\pi). \]

    Following work of Vaughan and Gafni on partitions into primes and prime powers, we derive asymptotic formulae for $p(n,\mu)$ and $p(n,\lambda)$, where $\mu$ and $\lambda$ denote the M\"obius and Liouville functions from prime number theory, respectively. In addition we discuss how quantities $p(n,f)$ generalize the classical notion of restricted partitions.
\end{abstract}
\maketitle

\section{Introduction}

For $n \in \nn$ let $\Pi[n]$ denote the set of \emph{partitions} of $n$, i.e., the set of positive integer tuples $(x_1,x_2,\ldots,x_k)$ such that \keep{$x_1 \geq x_2 \geq \ldots \geq x_k$} and \keep{$x_1 + x_2 + \cdots + x_k = n$}. 
For fixed $f:\nn\to\signs$, for $n \in \nn$ and $\pi=(x_1,x_2,\ldots,x_k) \in \Pi[n]$ let
    \<
        \label{eq:fpi}
        f(\pi) := f(x_1)f(x_2)\cdots f(x_k).
    \>
With this we define the \emph{signed partition numbers} $p(n,f)$ via
    \<
        \label{eq:pnfDefin}
        p(n,f) = \sum_{\pi\in\Pi[n]} f(\pi).  
    \>

Definition \eqref{eq:pnfDefin} generalizes several classical partition-related quantities. For example, with the constant function $1$ the $p(n,1)$ are the \emph{ordinary partition numbers} $p(n)$, and with the indicator function $1_A$ for some $A \subset \nn$ the $p(n,1_A)$ are the \emph{$A$-restricted partition numbers} $p_A(n)$, which we write as $p(n,A)$. 

Many sequences of restricted partition numbers are classical and well studied (see, e.g., \cites{andrews1976partitions,erdos1942elementary}). Despite this, the past ten years have seen a number of new publications on the asymptotics of some restricted partition numbers, e.g., \cites{berndt2018partitions,das2022partitions,gafni2016power,vaughan2015squares}.  

Our two sequences $\spnf$ of interest are those using (for $f$) the M\"obius $\mu$ and Liouville $\xl$ functions from multiplicative number theory. If $n \in \nn$ has prime factorization $p_1^{a_1}p_2^{a_2} \cdots p_r^{a_r}$ with distinct primes $p_i$ and all $a_i \geq 1$, then
    \[
        \xl(n) = (-1)^{a_1 + \cdots + a_r} \qquad \text{and} \qquad \mu(n) = 
            \begin{cases}
                (-1)^r & \text{if all $a_i = 1$,} \\
                0 & \text{otherwise.}
            \end{cases} 
    \]
We note that $\mu(n) = \xl(n)$ on the support of $\mu$ and $\mu^2$, i.e., on the set of \emph{squarefree} $n$.

We first state simple asymptotic results on the quantities $p(n,\mu)$ and $p(n,\xl)$ which are immediate corollaries of our main results. 

\begin{theorem}
    \label{thm:BigOh}
    For all $\xe > 0$, as $n\to\infty$ one has
        \begin{align}
            \label{eq:bigOh}
            p(n,\mu) = O\big(e^{(1+\xe)\sqrt{n}}\big) \qqand p(n,\xl) = O\big(e^{(1+\xe)\sqrt{\xz(2)n}}\big),
        \end{align}
    where $\xz(2) = \pi^2/6$. In addition, for $n$ even, say $n=2k$, as $k\to\infty$ one has
        \<
            \label{eq:logpn}
            \log p(2k,\mu) \sim \sqrt{2k} \qqand \log p(2k,\xl) \sim \sqrt{\zeta(2)2k},
        \>
    where the relation $a_m \sim b_m$ indicates that $\lim_{m\to\infty} a_m/b_m = 1$.
\end{theorem}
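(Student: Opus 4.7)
The plan is to derive Theorem~\ref{thm:BigOh} as a corollary of a precise saddle-point/circle-method analysis of the generating series
\[
F_f(q) := \sum_{n \ge 0} p(n,f)\, q^n = \prod_{m \ge 1} (1 - f(m) q^m)^{-1}, \qquad f : \nn \to \signs,
\]
for which
\[
\log F_f(e^{-\tau}) = \sum_{j \ge 1} \frac{1}{j} \sum_{m \ge 1} f(m)^j\, e^{-m j \tau}.
\]
Because $f$ is $\signs$-valued, one has $f(m)^j = f(m)^2$ for even $j$ and $f(m)^j = f(m)$ for odd $j \ge 1$. The odd-$j$ inner sums are $O(1)$ uniformly as $\tau \to 0^+$: this is a standard Mellin--Perron consequence of the analyticity at $s = 1$ of the Dirichlet series $1/\zeta(s) = \sum_n \xm(n)\, n^{-s}$ and $\zeta(2s)/\zeta(s) = \sum_n \xl(n)\, n^{-s}$. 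The even-$j$ sums dominate: using $\sum_n \xm^2(n)\, n^{-s} = \zeta(s)/\zeta(2s)$ (pole of residue $1/\zeta(2)$ at $s=1$) and $\xl^2 \equiv 1$, the inner sum for even $j$ behaves respectively as $1/(\zeta(2)j\tau)$ and $1/(j\tau)$. Summing $1/j^2$ over even $j$ (which yields $\zeta(2)/4$) then gives
\[
\log F_\xm(e^{-\tau}) \sim \frac{1}{4\tau} \qqand \log F_\xl(e^{-\tau}) \sim \frac{\zeta(2)}{4\tau} \qquad (\tau \to 0^+).
\]

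The bounds \eqref{eq:bigOh} then drop out of a classical saddle-point estimate of the Cauchy integral $p(n,f) = \frac{1}{2\pi i}\oint F_f(q)\, q^{-n-1}\, dq$ on the circle $|q| = e^{-\tau_n}$, with $\tau_n$ chosen to minimize the real-axis integrand: the saddle value is $\exp(2\sqrt{Cn})$ with $C = 1/4$ for $\xm$ and $C = \zeta(2)/4$ for $\xl$, and the $\xe$ absorbs the polynomial prefactor coming from the Gaussian width of the major arc. For the matching lower bounds implicit in \eqref{eq:logpn} one must control contributions from minor arcs, and the restriction $n = 2k$ enters precisely here: the next-largest singularity of $F_f$ on the unit circle sits at $q = -1$, and its contribution to the Cauchy integral carries a factor $(-1)^n$, so it adds coherently to (or at least does not dominantly oppose) the main $q = 1$ saddle exactly when $n$ is even.

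The principal obstacle is the behavior of $F_f$ near $q = -1$ and other low-order roots of unity. The naive inequality $|F_f(q)| \le F_{f^2}(|q|)$ gives only $\log F_{\xm^2}(e^{-\tau}) \sim 1/\tau$ and $\log F_{\xl^2}(e^{-\tau}) \sim \zeta(2)/\tau$, each a factor of $4$ larger than the correct constant and so doubling the target exponent. Recovering the sharp constants $1/4$ and $\zeta(2)/4$ requires genuine sign cancellation in the partial sums of $\xm$ and $\xl$---equivalently, quantitative information on $1/\zeta(s)$ and $\zeta(2s)/\zeta(s)$ near the edge of the critical strip---to show that the singularities of $F_f$ away from $q = 1$ are strictly milder than the main one. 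Presumably this is the core technical effort of the paper, and once it is in place both \eqref{eq:bigOh} and \eqref{eq:logpn} follow immediately.
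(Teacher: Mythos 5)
Your proposal follows essentially the same route as the paper: decompose $\log F_f(e^{-\tau})=\sum_j j^{-1}\sum_m f(m)^j e^{-mj\tau}$ by the parity of $j$, extract the main term $1/(4\tau)$ (resp.\ $\zeta(2)/(4\tau)$) from the even part via the pole of $\zeta(s)/\zeta(2s)$ (resp.\ $\zeta(s)$) at $s=1$, run a two-saddle circle method with principal arcs at $q=\pm1$, and observe that the $(-1)^{n}$ factor from $q=-1$ makes the two contributions add coherently when $n=2k$; this is exactly how the paper deduces Theorem~\ref{thm:BigOh} from Theorems~\ref{M:thm:Relations}--\ref{L:thm:pnlAsymp}, and you correctly identify the minor-arc/secondary-singularity control as the real work.

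One claim is overstated: the odd-$j$ inner sums $\sum_m \xm(m)e^{-mj\tau}$ and $\sum_m\xl(m)e^{-mj\tau}$ are \emph{not} $O(1)$ unconditionally --- that would require RH-strength control of $1/\zeta(s)$ inside the critical strip. What analyticity at $s=1$ plus the classical zero-free region actually yields (and what the paper proves, cf.\ Lemma~\ref{M:lem:PrincPsi1} and Corollary~\ref{M:lem:Psi1Uniform}, via Davenport's bound off the principal arcs) is $\ll \tau^{-1}(\log(1/\tau))^{-A}$, i.e.\ $o(1/\tau)$, which is all your argument needs; with that correction the sketch is sound.
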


The generating function $\Phi(z) = \Phi(z,f)$ for the sequence $\spnf$ is defined via
    \<
        \label{eq:Phi}
    	\Phi(z) = \prod_{n=1}^\infty (1-f(n)z^n)^{-1} = 1 + \sum_{n=1}^\infty p(n,f)z^n \qquad (|z|<1),
    \>
and $\Psi(z) = \Psi(z,f)$ is defined via
    \<
        \label{eq:PsiDefin}
        \Psi(z) = \sum_{k=1}^\infty \sum_{n=1}^\infty \frac{f^k(n)}{k} z^{nk} \qquad (|z| < 1),
    \>
so that 
    \<
        \Phi(z) = \exp \Psi(z).
    \>
By comparison to the functions $\Phi(z,1)$ and $\Psi(z,1)$, one sees that $\Phi(z,f)$ and $\Psi(z,f)$ are analytic for $|z|<1$, so by Cauchy's theorem one has
	\<
		\label{eq:pnfInt}
		p(n,f) = \frac{1}{2\pi i}\int_{|z|=\rho} \Phi(z)z^{-n-1} \,dz = \rho^{-n} \int_0^1 \Phi(\rho e(\xa)) e(-n\xa) \,d\xa
	\>
for all $0<\rho<1$, where $e(\xa) := \exp(2\pi i\xa)$.  Using integrals \eqref{eq:pnfInt}, we define the quantities $p(x,f)$ for general $x>0$ by using $x$ in place of $n$ therein.

When $f$ is nonnegative and $f(1)=1$, the integrals \eqref{eq:pnfInt} are dominated by the effect of the singularity %
\footnote{Strictly speaking, the analytic function $\Phi(z)$ may not have singularities in the classical sense, since the ``singularities'' of $\Phi(z)$ are likely to be dense in the unit circle and therefore not isolated. This technicality does not impact any of our rigorous arguments, and thus, lacking a better descriptor, we simply discuss the ``singularities'' of $\Phi(z)$ on the unit circle.}%
of $\Phi(z)$ at $z = +1$. Thus, for a given $x$ it is natural to select $\rho=\rho(x)$ in \eqref{eq:pnfInt} so as to minimize the quantity $\rho^{-x}\Phi(\rho)$, which is equivalent to minimizing $\Psi(\rho)-x\log\rho$. For these nonnegative $f$, the quantity $\Psi(\rho)-x\log\rho$ grows to $+\infty$ as $\rho$ approaches $0$ and $1$, and it follows that the minimum of said quantity occurs when
    \<
        \label{eq:spNonNeg}
        \rho \Psi'(\rho) = x.    
    \>
Moreover, such a $\rho$ satisfying \eqref{eq:spNonNeg} is necessarily unique, as the nonnegativity of $f$ implies that $\rho \Psi'(\rho)$ is strictly increasing on $(0,1)$. Equation \eqref{eq:spNonNeg} is the \emph{saddle-point equation} common to many analytic-combinatorial problems. 

When $f = \mu$ or $f = \xl$, the integrals \eqref{eq:pnfInt} for $p(x,\mu)$ and $p(x,\xl)$ are both dominated by the effects of \emph{two} singularities, namely those at $z = +1$ and $z = -1$. We are thus motivated to (individually) minimize the quantities $\Psi(\rho) - x \log \rho$ and $\Psi(-\rho) - x \log \rho$, leading to the equations
    \begin{align}
        \label{eq:sdp}
        \rho \Psi'(\rho) &= x, \tag{sp} \\
        \label{eq:sdp*}
        (-\rho)\Psi'(-\rho) &= x, \tag{sp$*$}
    \end{align}
where $(-\xr)\Psi'(-\xr)$ is $z\Psi'(z)$ evaluated at $z=-\rho$. 
Because $\mu$ and $\xl$ take both positive and negative values, it is no longer clear that solutions to equations \eqref{eq:sdp} and \eqref{eq:sdp*} even exist, much less unique ones. This is in fact the case for all sufficiently large $x$, as proved in section \ref{sec:Relations}.

For integer $j \geq 0$ let $\PsiD{j}(z) := \mop{z}^j\Psi(z)$. To preempt any confusion, we note that $(-1)^{j}\PsiD{j}(-\rho)$ and $\PsiD{j}(-\rho)$ are uniquely valued, regardless of their interpretations vis-\`a-vis the ``chain rule''.
When $x$ is sufficiently large and $\rho\in(0,1)$ is a solution to one of the equations \eqref{eq:sdp} and \eqref{eq:sdp*} with $\Psi(z)=\Psi(z,\mu)$, we have the following relations between $x$, $\rho$, and certain derivatives of $\Psi(\pm\rho)$.

\begin{theorem}
    \label{M:thm:Relations}
    Let $A > 0$ and $\Psi(z)=\Psi(z,\mu)$. For all sufficiently large $x$, if $\rho = \rho(x)$ satisfies at least one of the equations \eqref{eq:sdp} and \eqref{eq:sdp*}, then
        \begin{align*}
            - x \log\rho &= \tf12 \sqrt{x} \lf[ 1+O((\logx)^{-A}) \rh], \\
            \Psi(\pm\rho) &= \tf12 \sqrt{x} \lf[ 1+O((\logx)^{-A}) \rh].
        \end{align*}
    In general, for these $x$ and $\rho$ and for $j \geq 1$, one has
        \begin{align*}
            (\pm 1)^j\Psid{j}(\pm\rho) &= j!\:2^{j-1}x^{\fr{j+1}{2}} \lf[ 1+O_j((\logx)^{-A}) \rh], \\
            \PsiD{j}(\pm\rho) &= j!\:2^{j-1}x^{\fr{j+1}2} \lf[ 1+O_j((\logx)^{-A}) \rh].
        \end{align*}
\end{theorem}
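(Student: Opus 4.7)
The plan is to exploit the identity $\mu(n)^k = \mu^2(n)$ for even $k$ and $\mu(n)^k = \mu(n)$ for odd $k$ to split
\[ \Psi(\pm\rho,\mu) = \sum_{k\text{ even}} \fr{1}{k}\sum_n \mu^2(n)\rho^{nk} + \sum_{k\text{ odd}}\fr{1}{k}\sum_n (\pm 1)^n\mu(n)\rho^{nk}, \]
with the analogous split applying to $\PsiD{j}(\pm\rho)$ after inserting the factor $(nk)^j(\pm 1)^{nk}$. The even-$k$ piece is the main term, driven by the simple pole of the Dirichlet series $\sum\mu^2(n)n^{-s} = \zeta(s)/\zeta(2s)$ at $s=1$ with residue $1/\zeta(2)$; the odd-$k$ piece (and, at $-\rho$, its $(-1)^n$-twist) is of PNT quality, handled by standard partial-sum bounds on $\mu(n)$.

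Setting $w := -\log\rho$, my first step is the master expansion
\[ \PsiD{j}(\pm\rho) = \fr{j!}{4 w^{j+1}}\lf[1 + O_{j,A}\lf(\lf(\log\tf{1}{w}\rh)^{-A}\rh)\rh], \qquad j\geq 0. \]
For each even $k$, Mellin-transforming $\sum_n\mu^2(n) n^j e^{-nkw}$ and shifting the contour past the pole at $s=1$ (using a classical zero-free region for $\zeta$) produces a main term $\fr{j!}{\zeta(2)(kw)^{j+1}}$ and an error $O_{j,A}((kw)^{-(j+1)}(\log 1/(kw))^{-A})$; the factor $(nk)^j$ multiplies these through by $k^j$, and the outer $1/k$ together with $\sum_{k\text{ even}}k^{-2} = \zeta(2)/4$ conspire to give the main term $\fr{j!}{4w^{j+1}}$, cancelling the $\zeta(2)$ to leave the universal constant $1/4$. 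The odd-$k$ sums are controlled by Mertens-strength bounds on partial sums of $\mu(n)$ and its $(-1)^n$-twist, contributing only the stated error after a truncation around $k\asymp w^{-1/2}$.

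Because $\PsiD{1}(z) = z\Psi'(z)$, equation \eqref{eq:sdp} reads $\PsiD{1}(\rho) = x$ and \eqref{eq:sdp*} reads $\PsiD{1}(-\rho) = x$, so the $j=1$ master expansion gives $\fr{1}{4w^2}[1 + O_A((\log\tf{1}{w})^{-A})] = x$, which inverts to $w = \fr{1}{2\sqrt x}[1 + O_A((\log x)^{-A})]$. This immediately delivers $-x\log\rho = xw = \tf12\sqrt x[1+O_A((\log x)^{-A})]$, and the $j=0$ master expansion yields $\Psi(\pm\rho) = \fr{1}{4w}[1+\cdots] = \tf12\sqrt x[1+O_A((\log x)^{-A})]$. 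Substituting $w^{-(j+1)} = 2^{j+1}x^{(j+1)/2}[1+O_A\cdots]$ gives $\PsiD{j}(\pm\rho) = j!\,2^{j-1}x^{(j+1)/2}[1+O_{j,A}((\log x)^{-A})]$. Finally I convert to $\Psid{j}$ by inverting the Stirling-number relation $(z\partial_z)^j = \sum_i S(j,i)z^i\partial_z^i$ to obtain $z^j\Psid{j}(z) = \PsiD{j}(z) + \sum_{i<j}s(j,i)\PsiD{i}(z)$; at $z = \pm\rho$ the $i<j$ terms are of smaller order $x^{(i+1)/2}$, and $(\pm\rho)^{-j} = (\pm 1)^j(1+O(x^{-1/2}))$, giving $(\pm 1)^j\Psid{j}(\pm\rho) = \PsiD{j}(\pm\rho)(1+O_j(x^{-1/2}))$ and hence the first identity of the theorem.

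The main obstacle is the master expansion with its PNT-strength error uniform across all $k$ and all derivative orders $j$. After $j$ differentiations the effective Mellin contour sits further to the left of $\Re s = 1$, where one must contend with growth of $\zeta(s)$ in the critical strip and with the zeros of $\zeta(2s)$ entering $1/\zeta(2s)$; combined with the $k$-truncation this requires careful bookkeeping, though a standard classical zero-free region should suffice. Everything downstream — inverting the saddle-point equation, and passing between $\PsiD{j}$ and $\Psid{j}$ — is essentially algebraic once the master expansion is in hand.
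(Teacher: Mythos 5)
Your proposal is correct and follows essentially the same route as the paper: the even/odd-$k$ split with $\mu^k=\mu^2$ or $\mu$, the Mellin/contour-shift argument in which the pole of $\mathfrak{D}[\mu^2;s]=\zeta(s)/\zeta(2s)$ at $s=1$ produces the main term $j!/(4w^{j+1})$ while the $1/\zeta(s)$ factor of the odd part is beaten down to $(\log X)^{-A}$ quality via the classical zero-free region, the inversion of the saddle-point equation, and the Stirling-number passage between $\PsiD{j}$ and $\Psid{j}$ are all exactly the paper's Sections 6--8. The only cosmetic difference is that you Mellin-transform each inner sum separately and truncate in $k$, whereas the paper sums over $k$ inside the integral to produce a single contour integral carrying the factor $2^{-s-1}\zeta(s+1)$; both bookkeeping schemes yield the same constants and error terms.
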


Our next theorem gives a formula for $p(x,\mu)$ modeled after Vaughan's asymptotic formula for $p(n,\pp)$, the number of partitions into prime numbers, as in \cite{vaughan2008number}*{Thm.\ 2}. In particular, with $\Phi(z)$ and $\Psi(z)$ denoting $\Phi(z,\pp)$ and $\Psi(z,\pp)$, respectively, Vaughan establishes that as $n\to\infty$ one has
    \<
        \label{eq:pnP}
        p(n,\pp) = \frac{\rho^{-n}\Phi(\rho)}{\rt{2\pi\PsiD{2}(\rho)}} \big[ 1+O(n^{-\nfr15}) \big],
    \>
where $\rho \in (0,1)$ is the unique solution to the equation $\rho \Psi'(\rho) = n$. 

\begin{theorem}
    \label{M:thm:asymp}
    Let $\Phi(z)=\Phi(z,\xm)$ and $\Psi(z)=\Psi(z,\xm)$, and for sufficiently large $x$ let $\rho$ and $\rho_*$ denote the unique solutions to \eqref{eq:sdp} and \eqref{eq:sdp*}, respectively. As $x \to \infty$, one has
        \<
            \label{M:eq:pxmuForm}
            p(x,\mu) = \frac{\rho^{-x}\Phi(\rho)}{\sqrt{2\pi \PsiD{2}(\rho)}} \big[ 1+O(x^{-\nfr15}) \big] + (-1)^{-x} \frac{\rho_*^{-x}\Phi(-\rho_*)}{\sqrt{2\pi \PsiD{2}(-\rho_*)}} \big[ 1+O(x^{-\nfr15}) \big],
        \>
    where $(-1)^{-x} = \exp(-\pi ix)$.
\end{theorem}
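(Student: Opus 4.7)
The plan is to follow Vaughan's saddle-point derivation of \eqref{eq:pnP}, with the new wrinkle that $\Phi(z,\mu)$ has dominant behavior at \emph{two} points of the unit circle, namely $z=+1$ and $z=-1$. Each of the two terms in \eqref{M:eq:pxmuForm} will arise from a saddle-point integral centered at its own saddle, $z=\rho$ or $z=-\rho_*$.

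First I would apply Cauchy's theorem to deform the circle $|z|=\rho$ in \eqref{eq:pnfInt} into a closed contour $\fT$ that coincides with $|z|=\rho$ on the half-plane $\Re z \geq 0$, with $|z|=\rho_*$ on the half-plane $\Re z \leq 0$, and with two short vertical segments across the imaginary axis joining the two arcs. This deformation is legitimate since $\Phi(z,\mu)$ is analytic on $|z|<1$ and, by Theorem \ref{M:thm:Relations}, both $\rho,\rho_* \in (0,1)$. Next, on the right arc I would carry out a standard saddle-point analysis: parametrize $z = \rho e(\alpha)$ with $|\alpha| \leq \tf14$, cut out a major arc $|\alpha|\leq T_0$ with $T_0 \approx (\logx)^A x^{-3/4}$ (chosen so that $T_0^2 \PsiD{2}(\rho)$ grows like a power of $\log x$ while $T_0^3 \PsiD{3}(\rho) \to 0$), and Taylor-expand
\[
    \Psi(\rho e(\alpha)) = \Psi(\rho) + 2\pi i\alpha\cdot\rho\Psi'(\rho) - \tfrac{(2\pi\alpha)^2}{2}\PsiD{2}(\rho) + \O{\alpha^3 \PsiD{3}(\rho)}.
\]
Equation \eqref{eq:sdp} cancels the linear term against the $e(-x\alpha)$ factor of the integrand, Theorem \ref{M:thm:Relations} renders the cubic remainder negligible, and the resulting Gaussian integral evaluates to the first term of \eqref{M:eq:pxmuForm}. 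The analogous computation on the left arc, with $z = -\rho_* e(\beta)$ and \eqref{eq:sdp*} killing the linear term in $\beta$, produces the second term, with the prefactor $(-1)^{-x} = e^{-\pi ix}$ coming from the phase shift $e(-x/2)$.

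The main obstacle, and what will require the most work, is the minor-arc estimate: showing that $|\Phi(z,\mu)|$ is smaller than $\Phi(\rho,\mu)$ by a factor of $x^{-1/5}$ or better, uniformly for $z$ on the minor-arc portion of $|z|=\rho$, and analogously for $\Phi(-\rho_*,\mu)$ on $|z|=\rho_*$. Following Vaughan, the natural route is to express $\Psi(re(\alpha))$ through Mellin inversion in terms of a contour integral of Dirichlet series associated to $\mu$ and its powers $\mu^k$ (most notably $1/\zeta(s)$), and then to shift this Mellin contour into the classical zero-free region of $\zeta$. When $\alpha$ is bounded away from both $0$ and $\tf12$, the oscillations of $\cos(2\pi n \alpha)$ in $\Re\Psi(re(\alpha))$ average out enough of the dominant contributions associated with the singularities at $z=\pm 1$ to yield an inequality of the shape $\Re\Psi(re(\alpha)) - \Re\Psi(r) \leq -c\sqrt{x}(\logx)^{-C}$ on the minor arcs, for some $c,C>0$. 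Exponentiating converts this into the required savings over the main term, and the two short vertical segments bridging the circles $|z|=\rho$ and $|z|=\rho_*$ fall under the same umbrella since $|z|$ stays bounded away from $1$ there. Assembling the two main terms together with these minor-arc and bridge estimates yields \eqref{M:eq:pxmuForm}.
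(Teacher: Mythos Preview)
Your global architecture is exactly the paper's: deform the circle into two half-arcs of radii $\rho$ and $\rho_*$ joined by short segments on the imaginary axis, do a standard saddle-point/Taylor analysis on the principal arc near each of $z=\pm 1$ (your choice of window $T_0\asymp x^{-3/4}(\logx)^A$ matches the paper's $\xd=X^{-3/2}(\logX)^{3/2}$), and absorb everything else into the error. So the skeleton is right.

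The gap is in your minor-arc plan. A Mellin-inversion/zero-free-region argument controls only the \emph{odd} part of $\Psi$, i.e.\ the piece $\Psi_1(z)=\sum_{k\text{ odd}}k^{-1}\sum_n\mu(n)z^{nk}$, because there the relevant Dirichlet series is $1/\xz(s)$; this is essentially Davenport's bound $\sum_{n\le t}\mu(n)e(n\xq)\ll t(\log t)^{-A}$, uniform in $\xq$. But the dominant piece of $\Psi(z,\mu)$ on all arcs is the \emph{even} part $\Psi_0(z)=\sum_{k\text{ even}}k^{-1}\sum_n\mu^2(n)z^{nk}$, whose Dirichlet series $\xz(s)/\xz(2s)$ has a genuine pole at $s=1$ and for which a contour shift into the zero-free region gives no cancellation at all. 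On the minor arcs one instead needs a nontrivial exponential-sum estimate for $\sum_{n\le t}\mu^2(n)e(n\xq)$ when $\xq$ is far from low-height rationals; the paper imports the Br\"udern--Perelli bound $S_{\mu^2}(X,\xq)\ll X^{1+\xe}Q^{-1}$ on $\fm(X,Q)$ and then does some work to propagate it to the multiples $k\xq$. On the non-principal major arcs $\fM(a/q)$ with $q\ge 3$ one computes $\Psi_0$ directly via squarefree counts in progressions and finds $\Re\Psi(\rho e(\xa))\le\tf15 X+o(X)$, which suffices. Without the parity split $\Psi=\Psi_0+\Psi_1$ and the separate treatment of $\Psi_0$, your minor-arc step does not go through.

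One smaller point: your justification for the bridging segments (``$|z|$ stays bounded away from $1$'') is not correct, since $\rho,\rho_*\to 1$. What actually makes the bridges negligible is that $z=\pm i$ corresponds to $\xa=\tf14$, a major arc with $q=4$, and the paper computes $\Vmaj(4)=0$; hence $\Re\Psi(it)\ll X(\logX)^{-A}$ for $t$ between $\rho$ and $\rho_*$, giving $\fT(\rho,\rho_*)\ll e^{(1/2+\xe)\sqrt{x}}$.
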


For the case of $\Phi(z,\xl)$ and $\Psi(z,\xl)$ we have results similar to those above. Due to its ubiquity in our discussions of $\Psi(z,\xl)$, it is convenient to let
    \<
        \fz = \xz(2) = \tf{\pi^2}{6}.
    \>

\begin{theorem}
	\label{L:thm:Relations}
	Let $A>0$ and $\Psi(z)=\Psi(z,\xl)$. For all sufficiently large $x$, if $\rho = \rho(x)$ satisfies at least one of the equations \eqref{eq:sdp} and \eqref{eq:sdp*}, then
		\begin{align*}
			- x \log\rho &= \tf12\rt{\fz x} \brk{ 1+O((\logx)^{-A}) }, \\
            \Psi(\pm\rho) &= \tf12 \rt{\fz x} \brk{ 1+O((\logx)^{-A}) }.
        \end{align*}
    In general, for these $x$ and $\rho$ and for $j \geq 1$, one has
        \begin{align*}
            (\pm1)^{j}\Psid{j}(\pm\rho) &=  j!\: 2^{j-1} \fz^{\frac{1-j}{2}} x^{\fr{j+1}2} \brk{ 1+O_j((\logx)^{-A}) }, \\
            \PsiD{j}(\pm\rho) &= j!\: 2^{j-1} \fz^{\frac{1-j}{2}} x^{\fr{j+1}2} \brk{ 1+O_j((\logx)^{-A}) },
        \end{align*}
\end{theorem}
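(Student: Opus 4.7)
The proof parallels that of Theorem \ref{M:thm:Relations}, with the Dirichlet-series identities for $\mu$ replaced by those for $\lambda$. The essential algebraic input is that $\lambda$ is completely multiplicative with $\lambda(n)^2 \equiv 1$, so
\[
    \sum_{n=1}^\infty \lambda(n)^k n^{-s} = \begin{cases} \zeta(s) & k \text{ even}, \\ \zeta(2s)/\zeta(s) & k \text{ odd}. \end{cases}
\]
The first identity, in contrast to the corresponding $\mu$-identity (in which $\mu(n)^2$ is the indicator of squarefrees, giving $\zeta(s)/\zeta(2s)$ for even $k$), is the source of the extra factor of $\fz$ appearing in every main term below.

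Set $\alpha = -\log\rho$ and apply Mellin inversion $e^{-y} = \frac{1}{2\pi i}\int_{(c)}\Gamma(s)y^{-s}\,ds$ termwise in \eqref{eq:PsiDefin}. A parity split on $k$ yields, for $c > j+1$,
\[
    \PsiD{j}(\pm\rho,\lambda) = \frac{1}{2\pi i}\int_{(c)} \Gamma(s)\,\alpha^{-s}\,\zeta(s-j+1)\,G_\pm^{(j)}(s)\,ds,
\]
where $G_\pm^{(j)}(s)$ is an explicit combination of $\zeta(s-j)$, $\zeta(2(s-j))$, and $1/\zeta(s-j)$. For $z = -\rho$ the odd-$k$ summand acquires an extra factor $-(1 + 2^{1-s+j})$ coming from the Dirichlet series of $\lambda(n)(-1)^n$ (computed via the complete multiplicativity of $\lambda$); the even-$k$ summand is unaltered because $(-1)^{nk} = 1$ for even $k$.

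Shifting the contour past $\Re(s) = j+1$ yields a simple pole only at $s = j+1$, contributed by the $\zeta(s-j)$ factor in the even-$k$ part of $G_\pm^{(j)}$ (the odd-$k$ part is regular there because $1/\zeta(u)$ vanishes at $u=1$); the residue evaluates to $j!\,\fz/(4\alpha^{j+1})$, which is the main term. The subsidiary residues at $s = j + \tfrac12$ and $s = j$, together with the tail of the shifted integral---estimated via classical convexity bounds together with the zero-free region for $\zeta$---absorb into the error $O_j((\log(1/\alpha))^{-A})$. The corresponding asymptotic for $\Psid{j}(\pm\rho)$ is deduced from that for $\PsiD{j}(\pm\rho)$ through the standard identity $\PsiD{j} = \sum_{i\leq j}S(j,i)\,z^i\Psid{i}$; inversion of this relation and evaluation at $z = \pm\rho$ produce the sign $(\pm1)^j$ in front of $\Psid{j}$.

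Finally, the saddle-point equations \eqref{eq:sdp} and \eqref{eq:sdp*} read $\PsiD{1}(\pm\rho) = x$; inverting the expansion $\PsiD{1}(\pm\rho) = \fz/(4\alpha^2)[1+O(\cdot)]$ gives $\alpha = \tfrac12\sqrt{\fz/x}\bigl[1 + O((\log x)^{-A})\bigr]$, whence $-x\log\rho = x\alpha$ and the remaining formulae follow by substitution into the expansions above. The principal technical difficulty, which is shared with the $\mu$-case, is the uniform control of the error $O_j((\log x)^{-A})$ over all $j\geq 1$; this is obtained via the same integral estimates that underpin the proof of Theorem \ref{M:thm:Relations}.
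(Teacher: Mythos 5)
Your proposal is correct and follows essentially the same route as the paper: Mellin inversion of \eqref{eq:PsiDefin} with the parity split $\Dir{\lambda^2}{s}=\zeta(s)$, $\Dir{\lambda}{s}=\zeta(2s)/\zeta(s)$ (and $\Dir{(-1)^n\lambda(n)}{s}=-(1+2^{1-s})\zeta(2s)/\zeta(s)$ at $z=-\rho$), the main term coming from the residue $2^{-2}\zeta(2)\Gamma(j+1)\tau^{-j-1}$ of the even-$k$ integrand, the Stirling-number identity to pass from $\PsiD{j}$ to $(\pm1)^j\Psi^{(j)}(\pm\rho)$, and inversion of $\PsiD{1}(\pm\rho)=x$. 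One small caveat: your claim of picking up subsidiary residues at $s=j+\tfrac12$ and $s=j$ would require shifting the odd-$k$ contour past the line $\Re(s-j)=1$ into the critical strip, where $1/\zeta(s-j)$ has uncontrolled poles at the zeros of $\zeta$; the paper instead moves that piece only to the edge of the zero-free region (with a truncation at height $T=\exp(\sqrt{\log X})$), so no such residues are crossed and the odd-$k$ contribution is bounded directly by $O_j(X^{j+1}(\log X)^{-A})$ --- the conclusion is unaffected, but the contour geometry as you describe it would not be admissible for the odd part.
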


\begin{theorem}
    \label{L:thm:pnlAsymp}
    Let $\Phi(z)=\Phi(z,\xl)$ and $\Psi(z)=\Psi(z,\xl)$, and for sufficiently large $x$ let $\rho$ and $\rho_*$ denote the unique solutions to \eqref{eq:sdp} and \eqref{eq:sdp*}, respectively. As $x\to\infty$ one has
        \<
            \label{L:eq:pxxlForm}
            p(x,\xl) = \frac{\rho^{-x}\Phi(\rho)}{\rt{2\pi \PsiD{2}(\rho)}} \big[ 1+O(x^{-\nfr15}) \big] + (-1)^{-x} \frac{\rho_*^{-x}\Phi(-\rho_*)}{\rt{2\pi \PsiD{2}(-\rho_*)}} \big[ 1+O(x^{-\nfr15}) \big],
        \>
    where $(-1)^{-x} = \exp(-\pi ix)$.
\end{theorem}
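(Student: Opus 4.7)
The argument parallels the saddle-point proof of Theorem~\ref{M:thm:asymp}, with the M\"obius-specific minor-arc estimates replaced by their Liouville analogues. Starting from \eqref{eq:pnfInt} with $\Phi(z) = \Phi(z,\xl)$, I would use Cauchy's theorem to deform the circle of integration so that it traverses radius $\rho$ near $\alpha = 0$ and radius $\rho_*$ near $\alpha = \tfrac12$, joined by short transition arcs whose contribution is absorbed into the minor-arc bound. Introduce major arcs $\fM = \{|\alpha| \leq \delta\}$ and $\fM^* = \{|\alpha - \tfrac12| \leq \delta\}$ for a suitable $\delta = x^{-3/4+\eta}$ (with small $\eta > 0$), and let $\fm$ denote the complementary minor arc.

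On $\fM$, Taylor-expand $\log[\rho^{-x}\Phi(\rho e(\alpha))e(-x\alpha)]$ about $\alpha = 0$. The identity $\frac{d}{d\alpha}F(\rho e(\alpha)) = 2\pi i(z\partial_z F)(\rho e(\alpha))$ identifies the $j$th Taylor coefficient of $\Psi(\rho e(\alpha))$ with $\PsiD{j}(\rho)(2\pi i)^j/j!$, so \eqref{eq:sdp} kills the linear term while Theorem~\ref{L:thm:Relations} supplies $\PsiD{2}(\rho) \asymp \fz^{-1/2}x^{3/2}$, driving Gaussian concentration of width $\asymp x^{-3/4}$, well inside $\fM$. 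The bounds on $\PsiD{j}(\rho)$ for $j \geq 3$ from Theorem~\ref{L:thm:Relations} control the tail of the Taylor expansion, and a standard Gaussian evaluation yields the first summand of \eqref{L:eq:pxxlForm}. Parameterizing $\fM^*$ via $z = -\rho_* e(\beta)$ with $\beta = \alpha - \tfrac12$ produces the factor $(-1)^{-x} = e^{-i\pi x}$ from the principal branch of $z^{-x}$; equation \eqref{eq:sdp*} now annihilates the linear term, and an identical expansion---anchored by the $\pm\rho_*$ bounds of Theorem~\ref{L:thm:Relations}---delivers the second summand.

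The principal obstacle, and the only place where the Liouville case genuinely diverges from the M\"obius argument, is the minor-arc estimate showing $\int_\fm |\Phi(\rho e(\alpha))|\,d\alpha$ is negligible against either major-arc contribution. Writing $\log|\Phi(\rho e(\alpha))| = \Re\Psi(\rho e(\alpha),\xl)$ and splitting the double sum \eqref{eq:PsiDefin}, the terms with $k \geq 3$ are handled by absolute bounds, while the $k = 2$ contribution equals $\tfrac12\Re\Psi(\rho^2 e(2\alpha),1)$ and is controlled by the classical estimates for the ordinary partition generating function away from $z = \pm 1$. The dominant $k = 1$ sum $\sum_n \xl(n)\rho^n e(n\alpha)$ is treated by Perron-type bounds based on $\sum_n\xl(n)n^{-s} = \zeta(2s)/\zeta(s)$, adapting the M\"obius analysis (which uses $1/\zeta(s)$) to account for the additional pole of $\zeta(2s)$ at $s = \tfrac12$; the classical zero-free region for $\zeta$ supplies the necessary cancellation. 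Combining the two major-arc asymptotics with this minor-arc bound completes the proof.
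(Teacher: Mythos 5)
Your overall architecture matches the paper's: deform the contour to radius $\rho$ near $\alpha=0$ and $\rho_*$ near $\alpha=\tfrac12$ (section \ref{sec:transference}), kill the linear Taylor term with \eqref{eq:sdp} and \eqref{eq:sdp*}, and evaluate a Gaussian of width $\asymp\PsiD{2}(\rho)^{-1/2}\asymp x^{-3/4}$ (Lemma \ref{L:lem:PrincInts}). The major-arc evaluation and the origin of the factor $(-1)^{-x}$ are correctly sketched. The gap is in your treatment of everything outside the two Gaussian neighborhoods, and as written it does not close.

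The decisive problem is the claim that ``the terms with $k\geq3$ are handled by absolute bounds.'' The absolute bound is $|F_k(\rho^ke(k\alpha))|\leq\rho^k/(1-\rho^k)\sim X/k$, so bounding all $k\geq3$ absolutely costs $\sum_{k\geq3}X/k^2=(\xz(2)-\tfrac54)X\approx0.395X$, whereas the threshold you must stay under is $\Psi(\rho)=\tfrac14\xz(2)X(1+o(1))\approx0.411X$ minus a quantity $\gg\log x$ (the main terms have size $\rho^{-x}e^{\Psi(\rho)}x^{-3/4+o(1)}$). On genuine minor arcs the margin $0.016X\asymp\sqrt{x}$ barely suffices, but take $\alpha=\beta$ with $x^{-3/4+\eta}<|\beta|\ll X^{-1}$, i.e., just outside your set $\fM$ yet still very close to $0$: there $X\beta=o(1)$, so the $k=2$ term $\tfrac12 z^2/(1-z^2)$ has real part $\approx\tfrac14X(1+4\pi^2X^2\beta^2)^{-1}\approx\tfrac14 X$, and your upper bound becomes $\approx0.645X\gg0.411X$. (The same failure occurs just outside $|\alpha-\tfrac12|\leq\delta$.) The only sources of saving in this intermediate region are (i) cancellation in \emph{every} odd-$k$ sum up to a growing cutoff $K$ --- the paper's Lemma \ref{L:lem:BCBB} (Bateman--Chowla), with only the tail $k>K$ bounded absolutely, which makes the whole odd part $O(X(\logX)^{-A})$ uniformly in $\alpha$ --- and (ii) treating the \emph{entire} even part collectively, either via the major-arc asymptotic of Lemma \ref{L:lem:Major} (the factor $\Re(1-2\pi iX\beta)^{-1}<1$) or, closer in, via the Taylor term $-2\pi^2\beta^2\PsiD{2}(\rho)$; the paper's principal arcs have width $X^{-1}(\logX)^{-1/4}$, not $x^{-3/4+\eta}$, precisely so the Taylor bound hands off to the major-arc bound at a scale where the latter's $O(X(\logX)^{-A})$ error is dominated by the saving. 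Two smaller points: the $k=2$ term alone is the single geometric series $\tfrac12z^2/(1-z^2)$, not $\tfrac12\Psi(z^2,1)$ (that identity is for the sum over all even $k$); and a Perron/Mellin argument with $\xz(2s)/\xz(s)$ only applies on the principal arcs where $e(n\alpha)$ is absorbed into $e^{-n\tau}$ --- for general minor-arc $\alpha$ the $k=1$ sum needs a bound uniform in the additive twist, which is exactly what Bateman--Chowla (via Davenport) supplies.
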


We note that the results of Theorem \ref{thm:BigOh} immediately follow from Theorems \ref{M:thm:Relations}--\ref{L:thm:pnlAsymp}. Indeed, for the case of $p(n,\mu)$, using the relations in Theorem \ref{M:thm:Relations} one may deduce that the two fractions in equation \eqref{M:eq:pxmuForm} both grow like $\exp(\sqrt{n} + o(\sqrt{n}))$, from which the bound \eqref{eq:bigOh} for $p(n,\mu)$ follows at once. Because $(-1)^{-x} = +1$ when $x=2k$ with $k\in\nn$, the asymptotic relation for $\log p(2k,\mu)$ in \eqref{eq:logpn} similarly follows at once. The statements concerning $p(n,\xl)$ and $\log p(2k,\xl)$ similarly follow from Theorems \ref{L:thm:Relations} and \ref{L:thm:pnlAsymp}.

We now discuss three classical results on certain sequences $\spnA$ to help contextualize the relations of Theorem \ref{thm:BigOh}. The most well known of these is the relation 
    \[
        \log p(n,1) \sim \xk \rt{n}, 
    \]
where
    \[
        \xk := \pi \sqrt{2/3},    
    \]
which is an immediate corollary of Hardy and Ramanujan's main results in \cite{hardy1918asymptotic}.

Before stating the second result, we recall that $A \subset \nn$ is said to have \emph{density} $\xd_A$ if the ratio $|A \cap \{1,2,\ldots,N\}|N^{-1}$ tends to $\xd_A$ as $N\to\infty$. It is evident that $\xd_\nn = 1$, and it is well known that the set of squarefree numbers has density $6/\pi^2 \approx 0.601$. The prime number theorem implies that $\xd_\pp = 0$. A remarkable theorem of Erd\H{o}s \cite{erdos1942elementary} states that: if $A \subset \nn$ and $\mathrm{gcd}(A)=1$, then $A$ has density $\xd_A > 0$ if and only if
    \<
        \label{eq:Erdossim}
        \log p(n,A) \sim \xk \sqrt{\xd_A n}.
    \>
We remark that the proof of the ``only if'' statement in Erd\H{o}s' theorem is omitted in \cite{erdos1942elementary} since the proof of a similar result is given there, and that a complete proof of both statements may be found in \cite{nathanson2008elementary}*{Ch.\ 16}. The third result, though originally due to Roth and Szekeres \cite{roth1954some}, can be deduced as a corollary to Vaughan's formula \eqref{eq:pnP}. Namely, as $n\to\infty$ one has
	\<
		\label{eq:LogPrimessim}
		\log p(n,\pp) \sim \xk\rt{\fr{2n}{\log n}}.
	\>

Noting that $\xk = 2\rt{\xz(2)} = 2\rt{\fz}$, Theorem \ref{thm:BigOh} states that as $n\to\infty$ one has
    \[
        p(n,\mu) = O(e^{(1+\xe)\sqrt{n}}) \qqand p(n,\xl) = O(e^{(\fr12+\xe)\xk\sqrt{n}}),   
    \]
and that for $n = 2k$ one has
    \[
        \log p(2k,\mu) \sim \sqrt{2k} \qqand \log p(2k,\xl) \sim \tf12 \xk \sqrt{2k}.    
    \]

Finally we comment on our use of the term ``signed partition number'', since it is apparent from \eqref{eq:fpi} and \eqref{eq:pnfDefin} that ``weighted partition number'' is an equally appropriate term for $p(n,f)$. We avoid this latter term due to its ubiquity in the literature as a moniker for sequences having generating functions of the form
    \<
        \label{eq:weightedp}
        \Phi(z) = \prod_{n=1}^\infty (1-z^n)^{-f(n)} = 1 + \sum_{n=1}^\infty p_f(n) z^n    
    \>
for some nonnegative integer-valued $f(n)$. These $p_f(n)$ coincide with our $p(n,f)$ if $f$ takes only the values $0$ and $1$, but given the different roles $f(n)$ plays in \eqref{eq:Phi} and \eqref{eq:weightedp}, we favor the descriptor ``signed'' for $p(n,f)$ rather than ``weighted.''

\subsection*{Acknowledgements}

The author would like thank Trevor Wooley for both his suggestion of these research problems and his financial support, and would like to additionally thank both Trevor Wooley and Ben McReynolds for their numerous, invaluable suggestions and feedback in preparing this work. Thanks are also extended to Nicolas Robles and Alexandru Zaharescu for many helpful conversations in formulating this paper.

\section{Notation and structure}
\label{sec:prelim}

Expressions $O(g(x))$ with $g(x) > 0$ are quantities bounded by some constant multiple of $g(x)$. The relation $f(x) \less g(x)$ indicates that $f(x) = O(g(x))$ for all sufficiently large $x>0$, and $f(x)\asymp g(x)$ indicates that $f(x) \ll g(x)$ and $g(x) \ll f(x)$. Quantities of the form $X_0(A)$ indicate some constant $X_0 > 0$ depending on $A$. Statements involving $\xe$ are understood to hold for all sufficiently small $\xe > 0$ unless noted otherwise, and constants' dependencies on $\xe$ are generally suppressed.  

For real $\xa$ let $e(\xa)=\exp(2\pi i\xa)$, let $\|\xa\| = \min_{n\in\zz}|\xa-n|$, and let $\{\xa\} = \xa - \floor{\xa}$, and for real $\xa$ and $f:\nn\to\cc$ let
    \[
        S_f(t,\xa) = \sum_{n \leq t} f(n)e(n\xa).
    \]
Expressions $n \equiv a$ (mod $q$) are abbreviated as $n \equiv a \mod{q}$. We recall $\PsiD{j}(z) := \mop{z}^j \Psi(z)$. 

Let $1 \leq Q \leq X$. If $0\leq a \leq q$ and $(a,q)=1$, we define the \emph{arc} about $a/q$ as the set
	\[
		\fM(a/q) = \lf\{ \xa \in [0,1) : |\xa - a/q| \leq Q/(qX) \rh\},
	\]
noting that these sets are generally denoted $\fM(q,a)$ in the literature. Among the sets $\fM(a/q)$, the \emph{major arcs} are those $\fM(\nfr{a}{q})$ with $q \leq Q$, and $\fM(X,Q)$ denotes the union of all major arcs. The connected components of $[0,1) \setminus \fM(X,Q)$ are the \emph{minor arcs}, and their union is denoted $\fm(X,Q)$.

Ultimately, only $\xa$ lying in small subsets of the arcs $\fM(0/1)$, $\fM(1/2)$, and $\fM(1/1)$ significantly contribute to the integral \eqref{eq:pnfInt}, with all other $\xa$ contributing to an error term. Anticipating this, for $X,X_*>1$ we define
    \begin{align*}
        \fP &= \{ \xa \in [0,1) : \|\xa\| \leq X^{-1}(\logX)^{-\fr14} \}, \\
        \fP_* &= \{\xa \in [0,1) : |\xa - \tf12| \leq X_*^{-1}(\logX_*)^{-\fr14} \}.
    \end{align*}
Following Gafni \cite{gafni2021partitions} we term the sets $\fP$ and $\fP_*$ the \emph{principal arcs}, though colloquially the terms ``super-major arcs'' and ``major-major arcs'' have also been used. 

Functions $\Phi(z,f)$, $\Psi(z,f)$, etc.\ are simply written as $\Phi(z)$, $\Psi(z)$, etc. when the $f$ used is clear or when the discussions apply to general $f:\nn\to\signs$.
For $\rho, \rho_* \in (0,1)$ it is convenient to write
    \<
        \label{eq:X}
        X = \frac{1}{\log(1/\rho)} \qqand X_* = \frac{1}{\log(1/\rho_*)},
    \>
so that
    \<
        \label{eq:rho}
        \rho = e^{-1/X} \qqand \xr_* = e^{-1/X_*},
    \>
and this convention is maintained throughout this paper.

\subsection*{Structure}
This paper has three major components. Section \ref{sec:template} provides a general recipe for our analyses, after which sections \ref{sec:MinorArcs}--\ref{sec:PrincArcsII} consider the quantities $\Psi(\xr e(\xa),\mu)$ when $\xa$ is in the minor, major, and principal arcs and $\xr$ is sufficiently close to $1$ (rather, when $X$ is sufficiently large). In section \ref{sec:Relations} we examine the implicit relationships between $\xr$, $X$, and $x$ when $x$ is large and $\xr=\xr(x)$ is defined as the (unique) solution to equation \eqref{eq:sdp}. Similar analyses are made for $\xr_*$ and $X_*$ with $\xr_*$ defined as the solution to \eqref{eq:sdp*}, concluding the proof of Theorem \ref{M:thm:Relations}. 

Using our results on $\Psi(z,\mu)$, sections \ref{sec:transference} and \ref{sec:nonPrc} establish that only those $\xa$ in the principal arcs significantly contribute to the integral \eqref{eq:pnfInt} for $p(x,\mu)$. Following this, section \ref{sec:MAsymp} treats these principal-arc integrals to ultimately establish Theorem \ref{M:thm:asymp}.

Finally, section \ref{sec:Liouville} considers the quantities $p(x,\xl)$. Because the Liouville $\xl$ function is far easier to handle than the M\"obius $\mu$ function, the techniques of sections \ref{sec:template}--\ref{sec:Relations} allow for a rapid analysis of $\Psi(z,\xl)$ and proof of Theorem \ref{L:thm:Relations}. Due to the overwhelming similarities of the relations of Theorems \ref{M:thm:Relations} and \ref{L:thm:Relations}, the arguments of sections \ref{sec:transference}--\ref{sec:MAsymp} that prove Theorem \ref{M:thm:asymp} allow for an almost-immediate proof of Theorem \ref{L:thm:pnlAsymp}.

\subsection*{A note on \tops{$\xr_*$}{ρ*} and \tops{$X_*$}{X*}}

In sections \ref{sec:template}--\ref{sec:PrincArcsII} our results involve only $\xr$ and $X$, and not $\xr_*$ and $X_*$. Indeed, until section \ref{sec:Relations} our results do not require that $\rho$ (or $\rho_*$) be a solution to equation \eqref{eq:sdp} (or \eqref{eq:sdp*}), so there is no need to discuss two different $\xr,\xr_* \in (0,1)$ until then. Moreover, in section \ref{sec:Relations} we find that  when $\xr$ and $\xr_*$ are the (unique) solutions to \eqref{eq:sdp} and \eqref{eq:sdp*}, respectively, one has $X \sim X_* \sim 2\sqrt{x}$, making the results of sections \ref{sec:template}--\ref{sec:PrincArcsII} equally applicable if one uses $\xr_*$ and $X_*$ in place of $\xr$ and $X$, respectively.

\section{A template for our analyses}
\label{sec:template}

Our analyses of $\Psi(z,\mu)$ and $\Psi(z,\xl)$ for $z = \rho e(\xa)$ with $\xa$ in the sets $\fM(X,Q)$ and $\fm(X,Q)$ more or less follow the same sequence of steps. This section outlines these steps and summarizes them in Lemma \ref{F:lem:Fundamental}.

By comparison with $\Psi(z,1)$, the double series for $\Psi(z,f)$ in equation \eqref{eq:PsiDefin} converges absolutely for $|z|<1$. Thus, we exchange the sums in \eqref{eq:PsiDefin} and define
    \[
        F_k(z) = \sum_{n=1}^\infty f^k(n) z^n
    \]
so that
    \[
        \Psi(z) = \sum_{k=1}^\infty \sum_{n=1}^\infty \frac{f^k(n)}{k} z^{nk} = \sum_{k=1}^\infty \fr{1}{k} F_k(z^k).    
    \]

A cornerstone of our analyses is the observation that if $f(n)$ only takes values $0$ or $\pm 1$, then the functions $F_k(z)$ are only unique up to the parity of $k$. Indeed, as $f^k(n) = f(n)$ for odd $k$ and $f^k(n) = f^2(n) = |f(n)|$ for even $k$, we see that $F_k(z) = F_{k \,\,(\mathrm{mod}\, 2)}(z)$ for all $k$. We thus let
    \begin{alignat}{2}
        \label{F:eq:F0F1Defin}
        F_0(z) &= \sum_{n=1}^\infty f^2(n)z^n, \qquad && F_1(z) = \sum_{n=1}^\infty f(n)z^n, \\
        \label{F:eq:Psi0Psi1Defin}
        \Psi_0(z) &= \sum_{k \equiv 0 \mod2} \frac{1}{k} F_0(z^k), \qquad && \Psi_1(z) = \sum_{k\equiv 1 \mod2} \frac{1}{k} F_1(z^k),
    \end{alignat}
so that
    \[
        \Psi(z) = \Psi_0(z) + \Psi_1(z).
    \]

\begin{remark}
    We write $F_0(z)$ instead of $F_2(z)$ because the sums $\sum_n f^{2}(n)z^n$ provide the main asymptotic terms in our analyses of $p(n,\mu)$ and $p(n,\xl)$. 
\end{remark}

For $j \in \{0,1\}$ we split the functions $\Psi_j(z)$ into leading sums and tail sums, namely
    \<
        \label{F:eq:PsiHeadTailSums}
        \Psi_j(z) = \sum_{\substack{k \leq K\\k\equiv j \mod2}} \fr{1}{k} F_j(z^k) + \sum_{\substack{k > K\\ k \equiv j \mod2}} \fr{1}{k} F_j(z^k), 
    \>
where $K > 1$ is a real parameter of our choosing. The tail sums in equation \eqref{F:eq:PsiHeadTailSums} are always part of our error terms, so we record the following lemma bounding these tail sums independently of $j$, and even independently of $f$.

\begin{lemma}
    \label{F:lem:TailBB}
    Let $f : \nn\to\signs$. For real $\xa$ and large $X$ and $K$ one has that
        \[
            \sum_{k > K} \frac{1}{k} F_k(\rho^k e(k\xa)) \less \frac{X}{K}.
        \]
\end{lemma}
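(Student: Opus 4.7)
\medskip

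The plan is to apply the triangle inequality followed by the trivial bound $|f(n)| \leq 1$, and then estimate the resulting tail of a geometric-type series using the relation $\rho = e^{-1/X}$.

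First, for any integer $k\geq 1$ and any real $\alpha$, the bound $|f^k(n)| \leq 1$ together with $|e(nk\alpha)| = 1$ yields
\[
    |F_k(\rho^k e(k\alpha))| \leq \sum_{n=1}^\infty \rho^{nk} = \frac{\rho^k}{1-\rho^k}.
\]
Since $\rho = e^{-1/X}$, we have $\rho^k = e^{-k/X}$. Using the elementary inequality $1 - e^{-t} \geq \tfrac12\min(t,1)$ for $t>0$, applied with $t = k/X$, we obtain
\[
    \frac{\rho^k}{1-\rho^k} \ll \frac{e^{-k/X}}{\min(k/X,\,1)}.
\]
The problem thus reduces to bounding $\sum_{k>K} \frac{1}{k}\cdot\frac{e^{-k/X}}{\min(k/X,1)}$.

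Second, I would split the range of summation at $k=X$. In the regime $K < k \leq X$ (when $K \leq X$), one has $\min(k/X,1) = k/X$, so $\rho^k/(1-\rho^k) \leq 2X/k$, and
\[
    \sum_{K < k \leq X} \frac{1}{k}\cdot\frac{2X}{k} \leq 2X\sum_{k>K}\frac{1}{k^2} \ll \frac{X}{K}.
\]
In the regime $k > \max(K,X)$, one has $\min(k/X,1)=1$ and the exponential factor dominates: since $1-e^{-1/X}\asymp 1/X$,
\[
    \sum_{k > \max(K,X)} \frac{e^{-k/X}}{k} \leq \frac{1}{\max(K,X)}\cdot\frac{e^{-\max(K,X)/X}}{1-e^{-1/X}} \ll \frac{X}{\max(K,X)} \ll \frac{X}{K}.
\]
Summing the two contributions gives the claimed bound $\ll X/K$, uniformly in $\alpha$ and independently of $f$.

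No step here poses a real obstacle; the lemma is a crude a priori bound, and the fact that $f$ takes values in $\{0,\pm1\}$ means the triangle inequality loses essentially nothing beyond comparing $\Psi(z,f)$ to $\Psi(z,1)$. The only point requiring a moment's care is ensuring that the split at $k=X$ (or the use of the $\min$) handles both cases $K\leq X$ and $K>X$ uniformly, which the bookkeeping above accomplishes.
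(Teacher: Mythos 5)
Your proof is correct and follows essentially the same route as the paper: bound $|F_k(\rho^k e(k\alpha))|$ trivially by the geometric series $\rho^k/(1-\rho^k)$ and then estimate the tail sum. The paper dispenses with your case split at $k = X$ by writing $\rho^k/(1-\rho^k) = 1/(e^{k/X}-1) \leq X/k$ (valid for all $k$ since $e^t - 1 \geq t$), which gives $\sum_{k>K} X/k^2 \ll X/K$ in one line.
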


\begin{proof}
    For fixed $k$ we see that
        \[
            F_k(\rho^k e(k\xa)) = \sum_{n=1}^\infty f^k(n) \rho^{nk} e(nk \xa) \less \sum_{n=1}^\infty \rho^{nk} = \frac{\rho^k}{1 - \rho^k}.    
        \]
    Using the relation $\rho = e^{-1/X}$, it follows that
        \[
            \sum_{k > K} \frac{1}{k} F_k(\rho^k e(k\xa)) \less \sum_{k > K} \frac{1}{k} \, \frac{ e^{-k/X} }{1 - e^{-k/X} } = \sum_{k > K} \frac{1/k}{ e^{k/X} - 1 } \less \sum_{k > K} \frac{X}{k^2},    
        \]
    and the result follows.
\end{proof}

\begin{remark}
    It is clear that Lemma \ref{F:lem:TailBB} extends to all bounded $f : \nn \to \cc$.
\end{remark}

The analyses of $\Psi_0(z)$ and $\Psi_1(z)$ are thus reduced to analyses of $F_0(z^k)$ and $F_1(z^k)$ for $k \leq K$. In the following discussion let $j \in \{0,1\}$ and let $X$ be sufficiently large. 
Suppose that for $z$ in some arbitrary $\Omega \subset \{|z|<1\}$  there exist functions $\cX_0(z)$ and $\cX_1(z)$ such that: For $z \in \xO$ and $k \leq K$ one has
    \<
        \label{F:eq:FjCalXj}
        F_j(z^k) = \cX_j(z)/k.
    \>
Then
    \[
        \sum_{\substack{k \leq K \\ k \equiv j\mod2}} \frac{\cX_j(z)}{k^2} = c_j \cX_j(z) + O\pfr{\cX_j(z)}{K} \qquad (z \in \xO),
    \]
where $c_0 = \tf14 \xz(2)$ and $c_1 = \tf34 \xz(2)$, and it follows from Lemma \ref{F:lem:TailBB} that
    \<
		\label{F:eq:PsiXjExpr2}
        \Psi_j(z) = c_j\cX_j(z) + O\pfr{\cX_j(z)}{K} + O\pfr{X}{K} \qquad (z \in \xO).
    \>
    
If instead of \eqref{F:eq:FjCalXj} one has $F_j(z^k) \less \cX_j(z) / k$ for $z \in \xO$, then one concludes that 
    \[
        \Psi_j(z) \less \cX_j(z) + X/K \qquad (z \in \xO)
    \]
in place of \eqref{F:eq:PsiXjExpr2}. The above discussions are summarized in the following lemma. 

\begin{lemma}
    \label{F:lem:Fundamental}
    Let $f : \nn \to \signs$, and for $j\in\{0,1\}$ let $F_j(z)$ and $\Psi_j(z)$ be defined as in \eqref{F:eq:F0F1Defin} and \eqref{F:eq:Psi0Psi1Defin}, so that
        \[
            \Psi(z) = \sum_{k \equiv 0 \mod2} \fr{1}{k} F_0(z^k) + \sum_{k \equiv 1\mod2} \fr{1}{k} F_1(z^k) = \Psi_0(z) + \Psi_1(z) \qquad (|z|<1).
        \]
    Fixing $j\in\{0,1\}$, if for $z \in \xO$ and $1 \leq k \leq K$ one has $F_j(z^k) = \cX_j(z) / k$, then
        \<
            \label{F:eq:PsiXjExpr2lem}
            \Psi_j(z) = c_j\cX_j(z) + O\pfr{\cX_j(z)}{K} + O\pfr{X}{K} \qquad (z \in \xO),
        \>
	where $c_0 = \tf14 \xz(2)$ and $c_1 = \tf34 \xz(2)$. If instead of the relation $F_j(z^k)=\cX_j(z)/k$ one has that $F_j(z^k) \less \cX_j(z) / k$, then in place of equation \eqref{F:eq:PsiXjExpr2lem} one concludes that 
        \[
            \Psi_j(z) \less \cX_j(z) + X/K \qquad (z\in\xO).
        \]
\end{lemma}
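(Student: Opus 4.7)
The proof is essentially bookkeeping from the discussion immediately preceding the lemma, so my plan is to translate that discussion into a self-contained argument in a few clean steps.

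First, I would begin from the head/tail decomposition \eqref{F:eq:PsiHeadTailSums}, writing
\[
    \Psi_j(z) = \sum_{\substack{k \leq K \\ k \equiv j \mod 2}} \frac{1}{k} F_j(z^k) + \sum_{\substack{k > K \\ k \equiv j \mod 2}} \frac{1}{k} F_j(z^k).
\]
The tail sum is immediately handled by Lemma \ref{F:lem:TailBB} (applied to the subsequence of $k \equiv j \mod 2$ rather than all $k > K$, which only improves the bound by a factor of $2$), yielding a contribution of $O(X/K)$ regardless of whether $F_j$ satisfies an equality or an upper bound on $\xO$.

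Next, for the head sum I would substitute the hypothesis $F_j(z^k) = \cX_j(z)/k$ to rewrite
\[
    \sum_{\substack{k \leq K \\ k \equiv j \mod 2}} \frac{1}{k} F_j(z^k) = \cX_j(z) \sum_{\substack{k \leq K \\ k \equiv j \mod 2}} \frac{1}{k^2}.
\]
The key elementary computation is that $\sum_{k \equiv 0 \mod 2} k^{-2} = \tfrac{1}{4}\xz(2)$ and $\sum_{k \equiv 1 \mod 2} k^{-2} = \tfrac{3}{4}\xz(2)$, which identifies the constants $c_0$ and $c_1$ from the lemma. Truncating these series at $K$ introduces a tail of size $\sum_{k > K} k^{-2} = O(1/K)$, contributing an error of $O(\cX_j(z)/K)$. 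Combining with the earlier tail bound gives the stated expression \eqref{F:eq:PsiXjExpr2lem}.

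Finally, for the upper-bound version, I would run the same argument with $F_j(z^k) \less \cX_j(z)/k$ in place of the equality. The head sum is then bounded by $\cX_j(z) \sum_{k \leq K} k^{-2} \less \cX_j(z)$ (using that the full series converges, so we can drop the truncation at $K$ freely in the $\ll$ estimate), and the tail sum still contributes $O(X/K)$ by Lemma \ref{F:lem:TailBB}, giving $\Psi_j(z) \less \cX_j(z) + X/K$ on $\xO$. There is no real obstacle here; the only point requiring a moment of care is keeping track of the parity restriction when invoking the tail bound and when identifying the constants $c_j$ via the even/odd split of $\xz(2)$.
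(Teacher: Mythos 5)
Your proposal is correct and follows exactly the route the paper takes: the head/tail split \eqref{F:eq:PsiHeadTailSums}, Lemma \ref{F:lem:TailBB} for the tail, substitution of the hypothesis into the head sum, and the identification $c_0=\sum_{k\,\mathrm{even}}k^{-2}=\tfrac14\xz(2)$, $c_1=\sum_{k\,\mathrm{odd}}k^{-2}=\tfrac34\xz(2)$ with an $O(1/K)$ truncation error. Nothing is missing.
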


\section{The minor arcs}
\label{sec:MinorArcs}

We now begin our specific analyses of $\Psi(z,\mu)$, so $\Psi(z)$ always indicates $\Psi(z,\mu)$ unless otherwise stated. As $\Psi_1(z)$ is easier to estimate than $\Psi_0(z)$, we analyze $\Psi_1(z)$ first.

\begin{lemma}
    \label{M:lem:Minor1}
        Fix $A > 0$. For $X > X_0(A)$ and $\xa \in \fm(X,Q)$ one has
            \[
                \Psi_1(\rho e(\xa)) \less X(\logX)^{-A}.
            \]
    \end{lemma}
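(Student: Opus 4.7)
The plan is to apply the second (``$\less$'') half of Lemma \ref{F:lem:Fundamental} with $f = \mu$ and $j=1$. The goal is to establish a uniform bound of the form
\[
F_1(\rho^k e(k\alpha)) \less \frac{X}{k}(\log X)^{-A-1} \qquad (1 \leq k \leq K)
\]
for $\alpha \in \fm(X,Q)$, with the truncation parameter chosen as $K := (\log X)^{A}$. Plugging this into Lemma \ref{F:lem:Fundamental} then yields
\[
\Psi_1(\rho e(\alpha)) \less X(\log X)^{-A-1} + \frac{X}{K} \less X(\log X)^{-A},
\]
as required.

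The central input is Davenport's theorem on the Möbius function: for every $B>0$,
\[
\sum_{n\leq N}\mu(n)e(n\beta) \less_B N(\log N)^{-B}
\]
uniformly for all real $\beta$. Setting $\sigma = \rho^{k} = e^{-k/X}$, $\beta = k\alpha$, and $T(u,\beta) = \sum_{n \leq u}\mu(n)e(n\beta)$, Abel summation gives
\[
F_1(\rho^k e(k\alpha)) = \sum_{n \geq 1}\mu(n)\sigma^n e(n\beta) = -(\log\sigma)\int_1^\infty T(u,\beta)\,\sigma^u\,du,
\]
with the boundary contribution at $N\to\infty$ vanishing because $|T(N,\beta)| \leq N$ and $\sigma^N \to 0$ exponentially. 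Taking $B = A+1$ in Davenport's theorem bounds this last integral above by $\tfrac{k}{X}\int_1^\infty u(\log u)^{-A-1}e^{-ku/X}\,du$, and the substitution $v = ku/X$ makes this $\less (X/k)(\log(X/k))^{-A-1}$. Since the restriction $k \leq K = (\log X)^A$ ensures $\log(X/k) \asymp \log X$, the target bound on $F_1(\rho^k e(k\alpha))$ is obtained.

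The main obstacle is precisely the need to invoke the deep cancellation encoded in Davenport's theorem --- without a polylogarithmic saving in the Möbius exponential sum, no bound of the form $X(\log X)^{-A}$ could be obtained here. I would note that the minor-arc hypothesis on $\alpha$ is not actually exploited in the argument above, since Davenport's bound holds uniformly in $\beta$; the minor-arc condition will become genuinely necessary in the subsequent analysis of the even-indexed sum $\Psi_0$, whose summands involve $\mu^2$ and therefore do not admit unconditional cancellation.
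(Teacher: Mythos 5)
Your strategy is the paper's own: partial summation to write $F_1(\rho^k e(k\alpha))$ as a weighted integral of $S_\mu(t,k\alpha)$, Davenport's uniform bound to save an arbitrary power of the logarithm, and Lemma \ref{F:lem:Fundamental} to assemble $\Psi_1$; your choice $K=(\log X)^A$ in place of the paper's $K=X^{1/2}$ is harmless. However, one step as written fails. You apply Davenport's bound over the entire range $u\ge 1$, arriving at the majorant
\[
\frac{k}{X}\int_1^\infty u(\log u)^{-A-1}e^{-ku/X}\,du,
\]
which is divergent: near $u=1$ one has $(\log u)^{-A-1}\asymp (u-1)^{-A-1}$ with $A+1>1$, so the integral is $+\infty$ and cannot then be bounded by $(X/k)(\log(X/k))^{-A-1}$. (Davenport's inequality is in any case only stated for $t\ge 3$.) Moreover, even away from $u=1$ the factor $(\log u)^{-A-1}$ is merely $O(1)$ for bounded $u$, so the substitution $v=ku/X$ does not by itself produce the uniform saving $(\log(X/k))^{-A-1}$; that saving is only available once $u$ exceeds a fixed power of $X/k$.

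The repair is standard and is exactly what the paper does: split the integral at a threshold $U_0$ (the paper takes $U_0=X^{3/4}/k$), use the trivial bound $|S_\mu(u,\beta)|\le u$ for $u<U_0$ --- contributing $\ll (k/X)U_0^2=X^{1/2}/k$, a power saving over $X/k$ --- and invoke Davenport only for $u\ge U_0$, where $(\log u)^{-A-1}\le(\log U_0)^{-A-1}\ll(\log X)^{-A-1}$ holds uniformly because $k\le K$. With that modification your bound $F_1(\rho^k e(k\alpha))\ll (X/k)(\log X)^{-A-1}$ is valid, and the remainder of your argument, including the observation that the minor-arc hypothesis is never used (cf.\ Corollary \ref{M:lem:Psi1Uniform}), goes through.
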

    
\begin{proof}
    With $f = \mu$, the function $F_1(z)$ of \eqref{F:eq:F0F1Defin} is $\sum_{n=1}^\infty\mu(n) z^n$, whereby
        \[
            \Psi_1(\rho e(\xa)) = \sum_{k\equiv 1\mod2}  \frac{1}{k} F_1(\rho^k e(k\xa)) = \sum_{k\equiv 1\mod2} \frac{1}{k} \sum_{n=1}^\infty \mu(n) e^{-nk/X} e(n k\xa).
        \]
    Let $k \leq X^{1/2}$. Integrating by parts and recalling that $S_f(t,\theta)=\sum_{n \leq t} f(n)e(n\theta)$, we find that
        \<
            \label{M:eq:MajorF1Int}
            F_1(\rho^k e(k\xa)) = 
            \frac{k}{X} \int_0^\infty e^{-tk/X} S_\mu(t, k\xa) \,dt.
        \>
    When $t \geq X^{3/4}/k$, we bound $S_\mu(t,k\xa)$ using Davenport's inequality \cite{davenport1937some}:
        \<
            \label{eq:Davenport}
            S_\mu(t,\xq) \less t (\log t)^{-A} \qquad \text{for $t \geq 3$, uniformly in $\xq$.}
        \>
    Trivially bounding $S_\mu(t, k\xa) \less t$ when $t < X^{3/4}/k$, it follows that integral \eqref{M:eq:MajorF1Int} is
        \begin{align} 
            &\less \frac{k}{X} \lf[ \int_0^{X^{3/4}k^{-1}} e^{-tk/X} t \,dt + \int_{X^{3/4}k^{-1}}^\infty e^{-tk/X} t (\log t)^{-A} \,dt \rh] \notag \\
            \label{M:eq:MajorF1IntBB}
            &\less \frac{X}{k} \lf[ \int_0^{X^{-\nfr14}} e^{-u} u \,du + \log^{-A}(X^{\nfr34
            }k^{-1}) \int_{X^{-\nfr14}}^\infty e^{-u} u \,du \rh],
        \end{align}
    and we note that the assumption $k \leq X^{1/2}$ ensures that our use of inequality \eqref{eq:Davenport} remains valid as $X \to \infty$. We also note the change of $k/X$ to $X/k$ in expression \eqref{M:eq:MajorF1IntBB}. 
    As $\int_0^a e^{-u} u \,du \less a$ and $\int_a^\infty e^{-u} u \,du \less 1$ when $0<a<1$, we deduce that
        \[
            F_k(\xr^k e(k\xa)) \less \frac{X}{k}\Big[ X^{-\fr14} + \log^{-A}(X^{\fr34}k^{-1}) \Big] \less \frac{X/k}{(\logX)^{A}} \qquad (k \leq X^{\frac{1}{2}}).
        \]
    Applying Lemma \ref{F:lem:Fundamental} with $K = X^{1/2}$ we find that
        \[
            \Psi_1(\rho e(\xa)) \less X(\logX)^{-A} + X/X^{1/2},
        \]
    and the result follows. 
\end{proof}

\begin{remark}
    The reader may notice that the proof of Lemma \ref{M:lem:Minor1} did not utilize the assumption that $\xa \in \fm(X,Q)$. Indeed, because Davenport's inequality \eqref{eq:Davenport} is uniform in the variable $\xq$ (which became $k\xa$ above), we immediately deduce the following corollary.
\end{remark}

\begin{corollary}
    \label{M:lem:Psi1Uniform}
    For fixed $A>0$, real $\xa$, and all $X > X_0(A)$, one has that
        \[
            \Psi_1(\rho e(\xa)) \less X(\logX)^{-A}.
        \]
\end{corollary}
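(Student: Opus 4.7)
The plan is to observe that the proof of Lemma \ref{M:lem:Minor1} already establishes the stronger uniform statement claimed in the corollary, and so nothing new needs to be done. Concretely, the argument there proceeds by writing
\[
\Psi_1(\rho e(\xa)) = \sum_{k \equiv 1 \mod 2} \frac{1}{k} F_1(\rho^k e(k\xa)),
\]
applying Lemma \ref{F:lem:TailBB} to the tail $k > K$ with $K = X^{1/2}$, and for each head term $k \leq K$ integrating by parts to pass from $F_1(\rho^k e(k\xa))$ to the partial sum $S_\mu(t,k\xa)$. The only nontrivial estimate applied to the exponential sum is Davenport's bound \eqref{eq:Davenport}, together with the trivial bound $S_\mu(t,k\xa) \ll t$ for small $t$.

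The crucial feature is that Davenport's inequality is uniform in the frequency parameter $\xq$; in the proof of Lemma \ref{M:lem:Minor1} that parameter is $k\xa$, and no use is made of the minor-arc hypothesis on $\xa$ in obtaining the bound. Similarly, the trivial bound $|S_\mu(t,k\xa)| \leq t$ holds for every real $\xa$. Therefore the chain of estimates leading to $F_1(\rho^k e(k\xa)) \ll (X/k)(\log X)^{-A}$ for $k \leq X^{1/2}$, and then via Lemma \ref{F:lem:Fundamental} to $\Psi_1(\rho e(\xa)) \ll X(\log X)^{-A}$, is valid for arbitrary real $\xa$.

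There is essentially no obstacle: the proof proposal is simply to cite Lemma \ref{M:lem:Minor1} and remark that its argument nowhere invokes $\xa \in \fm(X,Q)$, the only property of $\xa$ being used is that $k\xa$ is a real number, which is automatic. Stating the corollary separately is worthwhile because in later sections $\Psi_1(\rho e(\xa))$ will also need to be controlled for $\xa$ lying in major or principal arcs, where this uniform estimate is exactly what is required.
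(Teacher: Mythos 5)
Your proposal is exactly the paper's argument: the paper derives this corollary from the remark that the proof of Lemma \ref{M:lem:Minor1} never uses the hypothesis $\xa \in \fm(X,Q)$, since Davenport's inequality \eqref{eq:Davenport} is uniform in the frequency parameter. Correct, and essentially identical to the paper's reasoning.
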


The analysis of $\Psi_0(\rho e(\xa))$ is much more involved than that of $\Psi_1(\rho e(\xa))$, as we require a suitable replacement for Davenport's inequality \eqref{eq:Davenport} for the sums $S_{\mu^2}(t,\xq)$. Our starting point is a theorem of Br\"udern and Perelli that is built on previous work of Br\"udern, Granville, Perelli, Vaughan, and Wooley \cite{brudern1998exponential}.

\begin{lemma}[{\cite{brudern1999exponential}, Thm.\ 4}]
    \label{lem:Brudern}
        If $1 \leq Q \leq X^{3/7}$, then
            \<
                \label{eq:Brudern}
                \sup_{\xq\in\fm(X,Q)} \lf| S_{\mu^2}(X, \xq)\rh| \less X^{1+\xe} Q^{-1}.
            \>
    \end{lemma}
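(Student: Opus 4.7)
The natural starting point is the classical identity $\mu^2(n) = \sum_{d^2 \mid n} \mu(d)$, which converts the exponential sum into a bilinear form:
$$S_{\mu^2}(X,\xq) = \sum_{d \leq \sqrt{X}} \mu(d) \sum_{m \leq X/d^2} e(d^2 m\xq).$$
The inner sum is a geometric progression with ratio $e(d^2\xq)$, so it is bounded by $\min(X/d^2, \|d^2\xq\|^{-1})$. The task is to estimate the resulting bilinear sum uniformly for $\xq \in \fm(X,Q)$.

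I would split the outer sum at a parameter $D \leq \sqrt{X}$, to be chosen near the end. For the small-$d$ range $d \leq D$, Dirichlet's theorem supplies coprime integers $a,q$ with $q \leq X/Q$ and $|\xq - a/q| \leq 1/(qX)$; the minor-arc hypothesis forces $q > Q$. Inserting the geometric bound and invoking standard estimates for sums of the shape $\sum_{d \leq D} \min(X/d^2, \|d^2 \xq\|^{-1})$ (treating $d^2$ as varying over a sparse set and splitting according to whether $d^2$ lies near a rational with small denominator), one expects a contribution of order $X^{1+\xe}(D q^{-1} + 1)$, which is satisfactory provided $D$ is not too large relative to $Q$.

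For the complementary range $D < d \leq \sqrt{X}$, the trivial geometric bound loses too much, and one must exploit cancellation in the $\mu(d)$ factors. The natural device is an $L^2$ mean-value (Parseval / large sieve) estimate: regrouping by $n = d^2m$, write the tail as $\sum_{n\leq X} a(n)e(n\xq)$ with $a(n) = \sum_{d > D,\,d^2\mid n}\mu(d)$, and apply Cauchy--Schwarz together with a bound for $\sum_n |a(n)|^2$. This yields a contribution of order roughly $X^{1+\xe}/D$, and balancing the two contributions at $D \asymp Q$ gives the desired bound $X^{1+\xe}/Q$.

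The main obstacle is pushing the admissible range of $Q$ all the way up to $X^{3/7}$. The naive splitting above typically only permits $Q$ up to about $X^{1/4}$, and widening the range requires a substantially more delicate bilinear-forms analysis, combining Weyl-type shift arguments with large-sieve inputs sharpened to exploit the squarefree support of $\mu^2$. This is precisely the content of the chain of refinements by Br\"udern, Granville, Perelli, Vaughan, and Wooley culminating in \cite{brudern1998exponential,brudern1999exponential}. Rather than reproduce this intricate optimization, I would simply invoke the Br\"udern--Perelli theorem directly, as the paper does.
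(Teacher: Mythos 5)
This lemma is not proved in the paper at all---it is imported verbatim as Theorem 4 of Br\"udern--Perelli \cite{brudern1999exponential}---and your proposal, after a reasonable sketch of the underlying bilinear-form strategy, correctly concludes by invoking that citation, which is exactly what the paper does. No gap to report.
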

 
To motivate how Lemma \ref{lem:Brudern} is used to establish our substitute for inequality \eqref{eq:Davenport} (and ultimately our bound on $\Psi_0$), we discuss a technical item to be addressed. Namely, to apply Lemma \ref{F:lem:Fundamental} we need that $F_0(\rho^k e(k\xa)) \less \cX_0(\rho^k e(k\xa)) / k$ for $k \leq K$, with $\cX_0$ not depending on $k$. However, the multiples $k\xa$ may not all be contained in $\fm(X,Q)$ as is necessary to apply the bound \eqref{eq:Brudern}. In fact, it may be that some $k\xa > 1$; however, as $S_f(t,\xq)=S_f(t,\{\xq\})$ for any $f$, it suffices to consider the fractional parts $\{k\xa\}$ of the $k\xa$.

Suppose for the moment that $K$, $\cX$, and $\cQ$ are quantities such that $\{k\xa\} \in \fm(\cX,\cQ)$ for all $\xa \in \fm(X,Q)$ and all $k \leq K$. In this case we have (for a general $f$) that
    \<
    \label{eq:minorInclBB}
    \sup_{\substack{\xa\in\fm(X,Q)\\ k \leq K}} |S_f(t, k\xa)| 
        = \sup_{\substack{\xa\in\fm(X,Q)\\ k \leq K}} |S_f(t,\{k\xa\})| 
        \leq \sup_{\xq \in \fm(\cX,\cQ)} |S_f(t,\xq)|.
    \>
Next, considering the definitions of the sets $\fM(\cX,\cQ)$ and $\fm(\cX,\cQ)$, one sees that if $t \geq \cX$, then $\fm(\cX,\cQ) \subset \fm(t,\cQ)$. From \eqref{eq:minorInclBB} then, for all $t \geq \cX$ we have that
    \<
        \label{eq:MinorInclBB1}
        \sup_{\substack{\xa\in\fm(X,Q)\\ k\leq K}} |S_f(t, k\xa)|
            \leq \sup_{\xq \in \fm(\cX,\cQ)} |S_f(t,\xq)|
            \leq \sup_{\xq\in\fm(t,\cQ)} |S_f(t,\xq)|.
    \>
Thus we need only ensure that we may apply Lemma \ref{lem:Brudern} to the final expression in \eqref{eq:MinorInclBB1} with $f=\mu^2$. These ideas are summarized in the following lemma. 

\begin{lemma}
    \label{lem:MinorInclBB}
    Suppose that $1\leq Q\leq X$, that $1\leq\cQ\leq\cX^{3/7}$, that $K>0$, and that: For all $\xa\in\fm(X,Q)$ and $k\leq K$ one has $\{k\xa\}\in\fm(\cX,\cQ)$. Then for $t\geq\cX$ one has
        \<
            \label{eq:MinorInclBrudern}
            \sup_{\substack{\xa\in\fm(X,Q)\\ k\leq K}} |S_{\mu^2}(t,k\xa)| 
                \leq \sup_{\xq\in\fm(t,\cQ)} |S_{\mu^2}(t,\xq)| 
                \less t^{1+\xe}\cQ^{-1}.
        \>
\end{lemma}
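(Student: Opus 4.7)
The plan is to string together three inequalities, each of which has been foreshadowed in the discussion immediately preceding the lemma. The overall structure will mirror equation \eqref{eq:MinorInclBB1}, culminating in an application of Lemma \ref{lem:Brudern}.

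First I would establish the leftmost inequality of \eqref{eq:MinorInclBrudern}. For any $f : \nn \to \cc$ and real $\xq$, the identity $S_f(t,\xq) = S_f(t,\{\xq\})$ holds because $e(n\xq) = e(n\{\xq\})$ for $n \in \zz$. Taking $f = \mu^2$ and using the hypothesis that $\{k\xa\} \in \fm(\cX,\cQ)$ whenever $\xa \in \fm(X,Q)$ and $k \leq K$, the sup over the pairs $(\xa,k)$ is dominated by the sup over $\xq \in \fm(\cX,\cQ)$.

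Next I would verify the set containment $\fm(\cX,\cQ) \subset \fm(t,\cQ)$ for $t \geq \cX$, which yields the second inequality. An arc $\fM(a/q)$ built with parameters $(\cX,\cQ)$ has half-width $\cQ/(q\cX)$, while the corresponding arc built with $(t,\cQ)$ has half-width $\cQ/(qt) \leq \cQ/(q\cX)$. Hence the major arcs shrink when $\cX$ is replaced by $t \geq \cX$, so $\fM(t,\cQ) \subset \fM(\cX,\cQ)$, and taking complements in $[0,1)$ reverses the inclusion for the minor arcs.

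Finally I would apply Lemma \ref{lem:Brudern} with the parameters there taken to be $t$ and $\cQ$. The hypothesis of that lemma requires $1 \leq \cQ \leq t^{3/7}$, which is satisfied because $\cQ \leq \cX^{3/7}$ by assumption and $\cX^{3/7} \leq t^{3/7}$ since $t \geq \cX$. This yields $\sup_{\xq \in \fm(t,\cQ)} |S_{\mu^2}(t,\xq)| \ll t^{1+\xe}\cQ^{-1}$, completing the chain. The lemma is essentially bookkeeping, so there is no genuine obstacle; the only point requiring any care is the monotonicity of the minor arcs in the scaling parameter, which I would verify as above.
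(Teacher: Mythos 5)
Your proposal is correct and follows exactly the route the paper takes: the paper's proof is simply ``Immediate by inequality \eqref{eq:MinorInclBB1} and Lemma \ref{lem:Brudern},'' where \eqref{eq:MinorInclBB1} encapsulates your first two steps (the fractional-part identity plus the hypothesis, and the inclusion $\fm(\cX,\cQ)\subset\fm(t,\cQ)$ for $t\geq\cX$). Your explicit verification of the monotonicity of the arcs and of the hypothesis $\cQ\leq t^{3/7}$ for Lemma \ref{lem:Brudern} fills in precisely the bookkeeping the paper leaves implicit.
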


\begin{proof}
    Immediate by inequality \eqref{eq:MinorInclBB1} and Lemma \ref{lem:Brudern}.
\end{proof}

We now consider how to select $K$, $\cX$, and $\cQ$. The simple choices $\cX = X$ and $\cQ = Q$ yield the estimate $\Psi(\rho e(\xa)) \less X^{1+\xe}$, which is insufficient since $\Psi(\rho e(\xa))$ is comparable to $X$ when $\xa\in\fM(X,Q)$.  
Examining Lemma \ref{lem:Brudern} and the proof of Lemma \ref{M:lem:Minor1}, one may deduce that \emph{decreasing} $\cX$ has a far greater effect on the resulting estimates of $\Psi_0(\rho e(\xa))$ than does \emph{increasing} $\cQ$, so we prioritize decreasing $\cX$. The quantities $\cX$ and $\cQ$ are linked through the set $\fm(\cX,\cQ)$ however, so we must consider an ``exchange'' between them.  The mechanism of this exchange is shown in the following lemma.

\begin{lemma}
\label{lem:MinorIncl}
    Fix $\xk \in (0,1)$, let $1 \leq Q \leq X$, let $\xa \in \fm(X,Q)$, and let $1 \leq k \leq Q^\xk$. If $(a_k,q_k)=1$ and 
        \<
            \lf|q_k k\xa-a_k\rh| \leq \nfr{Q^{1-\xk}}{X},    
        \>
    then $q_k > Q^{1-\xk}$. In particular one has $\{k\xa\} \in \fm(X,Q^{1-\xk})$. Finally, if $0 < \nu < 1-\xk$ then 
        \<
            \label{eq:MinorInclusion}
            \fm(X,Q^{1-\xk}) \subset \fm(XQ^{-\nu},Q^{1-\xk-\nu}).
        \>
\end{lemma}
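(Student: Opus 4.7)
The plan is to handle the three assertions in sequence: the first by contradiction, the second by a direct parallel argument, and the third by unpacking definitions.

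For the first assertion, assume for contradiction that $q_k \leq Q^{1-\xk}$. Then $kq_k \leq Q^{\xk}\cdot Q^{1-\xk}=Q$, and the hypothesis rewrites as
    \[ |\xa - a_k/(kq_k)| \leq Q^{1-\xk}/(kq_k X). \]
Reducing $a_k/(kq_k)$ to lowest terms $a'/q'$ — noting that $(a_k,q_k)=1$ forces the common factor to be $\gcd(a_k,k)$, so $q' = kq_k/\gcd(a_k,k) \leq Q$ and $(a',q')=1$ — one obtains $|\xa - a'/q'| \leq Q^{1-\xk}/(q'X)$ via $kq_k \geq q'$. For $X$ sufficiently large that $Q^{1-\xk}/(q'X) < 1/2$, the fact that $\xa \in [0,1)$ lies this close to $a'/q'$ forces $a'/q' \in [0,1)$ itself, so $0 \leq a' < q' \leq Q$ with $(a',q')=1$. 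Then $|\xa - a'/q'| \leq Q/(q'X)$ places $\xa$ in the major arc $\fM(a'/q')$, contradicting $\xa \in \fm(X,Q)$.

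For the second assertion, I would argue directly, bypassing the edge-case bookkeeping. Suppose $\{k\xa\} \in \fM(a/q)$ for some $(a,q)=1$ with $0 \leq a<q \leq Q^{1-\xk}$, so $\{k\xa\} = a/q + \delta$ with $|\delta| \leq Q^{1-\xk}/(qX)$. Setting $m := \lfloor k\xa \rfloor \in \{0,1,\ldots,k-1\}$ gives $\xa = (mq+a)/(kq) + \delta/k$. The rational $(mq+a)/(kq)$ has numerator in $[0,kq)$ (by the ranges of $m$ and $a$) and denominator $kq \leq Q^\xk \cdot Q^{1-\xk} = Q$; reducing it to lowest terms $a''/q''$ preserves $0 \leq a'' < q''$, $(a'',q'')=1$, and $q'' \leq Q$, with $|\xa - a''/q''| \leq Q^{1-\xk}/(kqX) \leq Q/(q''X)$. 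Thus $\xa \in \fM(a''/q'')$, contradicting $\xa \in \fm(X,Q)$.

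The third assertion is a direct unpacking. Given $\xb \in \fm(X, Q^{1-\xk})$, one has $|\xb - a/q| > Q^{1-\xk}/(qX)$ for every $(a,q)=1$ with $0 \leq a<q \leq Q^{1-\xk}$. Since $Q^{1-\xk-\nu} \leq Q^{1-\xk}$ (as $\nu > 0$) and
    \[ Q^{1-\xk}/(qX) = Q^{1-\xk-\nu}/(q \cdot XQ^{-\nu}), \]
this rewrites as $|\xb - a/q| > Q^{1-\xk-\nu}/(q \cdot XQ^{-\nu})$ for every $(a,q)=1$ with $0 \leq a<q \leq Q^{1-\xk-\nu}$, which is exactly $\xb \in \fm(XQ^{-\nu}, Q^{1-\xk-\nu})$, establishing \eqref{eq:MinorInclusion}. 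The principal obstacle is the reduction-to-lowest-terms bookkeeping together with the verification that the canonical numerator lies in $[0, q'')$; this is routine for the second assertion because $(mq+a)/(kq)$ already has its numerator in range, while the first assertion requires a mild large-$X$ hypothesis to rule out the edge case in which $a'/q'$ falls slightly outside $[0,1)$.
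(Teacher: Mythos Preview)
Your proof is correct and follows essentially the same approach as the paper: reduce the rational approximation of $k\xa$ (or $\{k\xa\}$) to lowest terms, observe that the reduced denominator is at most $kq_k \leq Q$, and invoke the minor-arc property of $\xa$ to force the denominator large; the third assertion is in both cases a direct unpacking of definitions via the identity $Q^{1-\xk}/(qX) = Q^{1-\xk-\nu}/(q\cdot XQ^{-\nu})$. The only notable difference is cosmetic---the paper argues directly while you argue by contradiction, and the paper derives the second assertion from the first via Dirichlet while you rerun the argument---and your large-$X$ caveat for the range of $a'$ is a technicality the paper simply glosses over.
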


\begin{proof}
    We begin by recalling that $\fm(X,Q)$ consists of those $\xb \in [0,1)$ such that:\ if $0 \leq b < r$, $(b,r) = 1$, and $|r\xb-b| \leq Q/X$, then it holds that $r > Q$. 
    
    By Dirichlet's approximation theorem there exist $q_k \in \nn$ and $a_k \in \zz$ with $q_k \leq X/Q$, $(a_k,q_k)=1$, and $|q_kk\xa  - a_k| \leq Q/X$. By writing $a_k/(kq_k) = a_k'/(k'q_k)$ with $(a_k',k'q_k)=1$ and $k' \leq k$, we deduce that
        \[
            \lf| \xa-\frac{a_k'}{k'q_k} \rh| =  \lf| \xa-\frac{a_k}{k q_k} \rh| \leq \frac{Q}{kq_kX} \leq \frac{Q}{k'q_k X}.
        \]
    As $(a_k',k'q_k) = 1$ and $\xa \in \fm(X,Q)$, we thus have $k'q_k > Q$ and 
    {\binoppenalty=10000\relpenalty=10000 $q_k > Q/k' \geq Q^{1-\xk}$}. 
    Again using Dirichlet's approximation theorem, there exist $0 \leq b_k < r_k$ with $(b_k,r_k)=1$ and $|r_k\{k\xa\} - b_k| \leq Q/X$. Setting $a_k := b_k + r_k \lfloor k \xa \rfloor$, we have that $(a_k,r_k)=1$ and $|r_k k \xa - a_k| \leq Q/X$, whereby $r_k > Q^{1-\xk}$ and $\{k\xa\} \in \fm(X,Q^{1-\xk})$.

    Now let $0<\nu<1-\xk$ and $\xb \in \fm(X,Q^{1-\xk})$, and suppose that $(b,r)=1$ and
        \<
            \label{eq:DirichletIneq}
            |r\xb-b| \leq \nfr{Q^{1-\xk-\nu}}{(XQ^{-\nu})}.
        \> 
    Of course $|r\xb-b|\leq Q^{1-\xk}/X$ then, whereby $r > Q^{1-\xk} \geq Q^{1-\xk-\nu}$, yielding the result.  
\end{proof}

We now establish the desired substitute for Davenport's inequality \eqref{eq:Davenport}.

\begin{lemma}
    \label{M:lem:MinorSmu2FinalBB}
    Let $X$ be sufficiently large, and let $\xd,\xk,\nu \in (0,1)$ be such that $\nu < 1 - \xk$ and $\xd(1-\xk-\nu) \leq \tf37 (1-\xd\nu)$. Let $Q = X^\xd$. For all $x \geq XQ^{-\nu}$ one has
        \< 
            \sup_{\substack{\xa \in \fm(X,Q) \\ k \leq Q^{\xk}}} \lf| S_{\mu^2}(x,k\xa) \rh| \less x^{1+\xe} Q^{\xk+\nu-1}.
        \>
\end{lemma}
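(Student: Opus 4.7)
The plan is to assemble Lemmas \ref{lem:MinorIncl} and \ref{lem:MinorInclBB} with the parameter choices naturally suggested by the hypotheses. Specifically, I would set $K = Q^\xk$, $\cX = XQ^{-\nu}$, and $\cQ = Q^{1-\xk-\nu}$, and show that these satisfy all the conditions needed to invoke Lemma \ref{lem:MinorInclBB}, after which the stated estimate drops out directly.

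First I would apply Lemma \ref{lem:MinorIncl}: for $\xa \in \fm(X,Q)$ and $1 \leq k \leq Q^\xk$, that lemma places $\{k\xa\}$ in $\fm(X,Q^{1-\xk})$; since $0<\nu<1-\xk$ by hypothesis, the inclusion \eqref{eq:MinorInclusion} then puts $\{k\xa\}$ inside $\fm(XQ^{-\nu},Q^{1-\xk-\nu}) = \fm(\cX,\cQ)$. This is precisely the set-inclusion hypothesis required by Lemma \ref{lem:MinorInclBB}.

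Next I would verify the quantitative constraints needed for Lemma \ref{lem:MinorInclBB}. The bound $1 \leq Q \leq X$ is immediate from $Q = X^\xd$ with $\xd \in (0,1)$ and $X$ sufficiently large; the lower bound $\cQ \geq 1$ is likewise automatic for large $X$ since $1-\xk-\nu > 0$. The crucial upper bound $\cQ \leq \cX^{3/7}$, after substituting $\cX = X^{1-\xd\nu}$ and $\cQ = X^{\xd(1-\xk-\nu)}$ and comparing exponents of $X$, is exactly the arithmetic condition $\xd(1-\xk-\nu) \leq \tf37(1-\xd\nu)$ assumed in the statement. With these hypotheses confirmed, Lemma \ref{lem:MinorInclBB} applied at $t=x \geq \cX = XQ^{-\nu}$ gives
$$\sup_{\substack{\xa\in\fm(X,Q)\\ k\leq Q^\xk}} \lf|S_{\mu^2}(x,k\xa)\rh| \;\less\; x^{1+\xe}\cQ^{-1} \;=\; x^{1+\xe}Q^{\xk+\nu-1},$$
which is the claimed bound.

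There is no real obstacle in the argument itself: all the deep content has been encapsulated upstream in the Br\"udern--Perelli estimate of Lemma \ref{lem:Brudern} and the Dirichlet-approximation manipulation carried out in Lemma \ref{lem:MinorIncl}. The only delicate point -- and the sole reason for the specific shape of the inequality on $\xd,\xk,\nu$ -- is ensuring that the auxiliary pair $(\cX,\cQ)$ still satisfies $\cQ \leq \cX^{3/7}$, so that the Br\"udern--Perelli bound remains available after the parameter exchange. Thus the work of writing the proof reduces to explicitly identifying $(\cX,\cQ,K)$ and checking that the exponent bookkeeping closes.
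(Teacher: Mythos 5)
Your proposal is correct and follows exactly the paper's own argument: apply Lemma \ref{lem:MinorIncl} to place $\{k\xa\}$ in $\fm(XQ^{-\nu},Q^{1-\xk-\nu})$, note that the exponent condition $\xd(1-\xk-\nu)\leq\tf37(1-\xd\nu)$ is precisely $\cQ\leq\cX^{3/7}$, and invoke Lemma \ref{lem:MinorInclBB} with $\cX=XQ^{-\nu}$, $\cQ=Q^{1-\xk-\nu}$, $K=Q^\xk$. Nothing further is needed.
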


\begin{proof}
    By Lemma \ref{lem:MinorIncl}, we have $\{k\xa\} \in \fm(XQ^{-\nu}, Q^{1-\xk-\nu})$ for all $\xa \in \fm(X,Q)$ and $k \leq Q^\xk$. As $Q^{1-\xk-\nu} \leq (XQ^{-\nu})^{3/7}$ by our assumptions, we apply Lemma \ref{lem:MinorInclBB} with $\cX = XQ^{-\nu}$, $\cQ = Q^{1-\xk-\nu}$, and $K = Q^\xk$ to conclude that
        \<
            \label{eq:minorsupBB}
            \sup_{\substack{\xa \in \fm(X,Q) \\ k \leq Q^\xk}} \lf| S_{\mu^2}(t,k\xa) \rh| \less t^{1+\xe}(Q^{1-\xk-\nu})^{-1} \qquad (t \geq XQ^{-\nu}).
        \>
\end{proof}

At last we estimate $\Psi_0(\rho e(\xa))$, and thereby $\Psi(\rho e(\xa))$, for all $\xa \in \fm(X,Q)$. 

\begin{proposition}
	\label{M:prop:Minor}
	Let $A > 0$ be fixed and let $Q = X^{2/5}$. For all $X > X_0(A)$ and all $\xa \in \fm(X,Q)$ one has
		\[
			\Psi(\rho e(\xa)) \less X (\logX)^{-A}.
        \]
\end{proposition}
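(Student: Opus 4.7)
The plan is to decompose $\Psi(\xr e(\xa)) = \Psi_0(\xr e(\xa)) + \Psi_1(\xr e(\xa))$ as in Section \ref{sec:template}, invoke Lemma \ref{M:lem:Minor1} to obtain $\Psi_1(\xr e(\xa)) \less X(\logX)^{-A}$ immediately, and then concentrate on $\Psi_0(\xr e(\xa))$. Because Lemma \ref{M:lem:MinorSmu2FinalBB} supplies power-saving estimates for the exponential sums $S_{\xm^2}(t,k\xa)$, one should expect $\Psi_0(\xr e(\xa)) \less X^{1-c}$ for some fixed $c > 0$, which is far beyond the mere log-saving required.

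For $\Psi_0$, the strategy is to imitate the proof of Lemma \ref{M:lem:Minor1}, using Lemma \ref{M:lem:MinorSmu2FinalBB} as a replacement for Davenport's inequality \eqref{eq:Davenport}. I would fix $\xd = 2/5$ and then choose auxiliary parameters $\xk,\xn \in (0,1)$ satisfying $\xk < \xn$, $\xk + \xn < 1$, and $\xd(1-\xk-\xn) \leq \tfrac{3}{7}(1-\xd\xn)$; for instance $\xk = 1/4$, $\xn = 1/2$ is feasible. For each $k \leq K := Q^{\xk}$, integration by parts as in \eqref{M:eq:MajorF1Int} yields
\[
F_0(\xr^k e(k\xa)) = \frac{k}{X} \int_0^\infty e^{-tk/X} S_{\xm^2}(t,k\xa)\,dt,
\]
which I would split at $t_0 = XQ^{-\xn}$. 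On $t < t_0$ the trivial bound $|S_{\xm^2}(t,k\xa)| \leq t$ combined with $kt_0/X \leq Q^{\xk-\xn} < 1$ produces a contribution $\less (X/k) Q^{2(\xk-\xn)}$. On $t \geq t_0$ Lemma \ref{M:lem:MinorSmu2FinalBB} gives $|S_{\xm^2}(t,k\xa)| \less t^{1+\xe} Q^{\xk+\xn-1}$, and a substitution $u = tk/X$ yields a contribution $\less (X/k)^{1+\xe} Q^{\xk+\xn-1}$. Both bounds are of the form $\cX_0/k$ with $\cX_0 \less X^{1+\xe}\bigl[Q^{2(\xk-\xn)} + Q^{\xk+\xn-1}\bigr]$ independent of $k$, so Lemma \ref{F:lem:Fundamental} applied with this $K$ gives
\[
\Psi_0(\xr e(\xa)) \less X^{1+\xe}\bigl[Q^{2(\xk-\xn)} + Q^{\xk+\xn-1}\bigr] + XQ^{-\xk},
\]
which for the proposed $\xk,\xn$ is $\less X^{1-c}$ for some fixed $c > 0$ (roughly $c = 1/10$ for the sample choice).

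The main delicate point is not any single estimate but rather a tripartite parameter balance: one needs Br\"udern--Perelli to apply on the shrunken arc $\fm(XQ^{-\xn}, Q^{1-\xk-\xn})$ (whence the third constraint above), the trivial piece of the integral must give a genuine power saving (forcing $\xk < \xn$), and the tail sum $X/Q^{\xk}$ arising from Lemma \ref{F:lem:Fundamental} must itself be a power saving (forcing $\xk > 0$). The paper's choice $Q = X^{2/5}$ sits safely in this feasible region, so once these ingredients are assembled the proof is more a matter of bookkeeping than substantive difficulty.
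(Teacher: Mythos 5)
Your proposal is correct and follows essentially the same route as the paper: split off $\Psi_1$ via Lemma \ref{M:lem:Minor1}, integrate $F_0$ by parts, split the integral at $XQ^{-\nu}$, use the trivial bound and Lemma \ref{M:lem:MinorSmu2FinalBB} on the two ranges, and feed the resulting $k$-uniform bound into Lemma \ref{F:lem:Fundamental}. The only (immaterial) difference is the sample choice $\nu = 1/2$ where the paper takes $\nu = 2/5$; both lie in the feasible region and yield $\Psi_0(\rho e(\alpha)) \ll X^{9/10+\epsilon}$.
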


\begin{proof}
    Since $\Psi_1(\rho e(\xa)) \less X(\logX)^{-A}$ by Lemma \ref{M:lem:Minor1}, it is more than sufficient to show that $\Psi_0(\rho e(\xa)) \less X^{\nfr9{10}}$. Recalling from \eqref{F:eq:F0F1Defin} that $F_0(z) = \sum_{n=1}^\infty \mu^2(n) z^{n}$, we sum by parts to find that
		\[
            F_0(\rho^k e(k\xa)) = \sum_{n=1}^\infty \mu^2(n) \rho^{nk} e(nk\xa) = \frac{k}{X} \int_0^\infty e^{-tk/X} S_{\mu^2}(t, k\xa) \,dt.
		\]    
    
    For the sake of exposition we let $\xd,\xk,\nu \in (0,1)$ be unknowns such that $\nu < 1 - \xk$ and $\xd(1-\xk-\nu) \leq \tf37 (1-\xd\nu)$, let $Q = X^\xd$, and let $k \leq Q^{\xk}$. Bounding $S_{\mu^2}(t,k\xa)\less t$ when $t \leq XQ^{-\nu}$ and using \eqref{eq:minorsupBB} when $t > XQ^{-\nu}$, we have
        \begin{align*}
            \notag
            F_0(\rho^k e(k\xa)) 
                &\less \frac{k}{X} \brk{ \int_0^{XQ^{-\nu}} e^{-tk/X} t \,dt + Q^{\xk+\nu-1} \int_{XQ^{-\nu}}^\infty e^{-tk/X} t^{1+\xe} \,dt } \\
            \notag
            &= \frac{X}{k} \brk{ \int_0^{kQ^{-\nu}} e^{-u} u \,du  + Q^{\xk+\nu-1} \pfr{X}{k}^\xe \int_{kQ^{-\nu}}^\infty e^{-u}u \,du } \\
            &\less \frac{X}{k} \Big[ 1 - \lf( 1+kQ^{-\nu} \rh) \expp{-kQ^{-\nu}} + Q^{\xk+\nu-1} X^\xe \Big].         
        \end{align*}
    As $k \leq Q^\xk$ and the function $1-(1+t)e^{-t}$ is increasing in $t$, this is
        \<
            \label{M:eq:MinorF0BB3}
            \less \frac{X}{k} \Big[ 1 - (1+Q^{\xk-\nu}) \exp(-Q^{\xk-\nu}) + Q^{\xk+\nu-1} X^\xe \Big].     
        \>
    If, in addition to our previous assumptions, we suppose that $\xk-\nu<0$, then the inequality $1-(1+t)e^{-t} \less t^2$ (for small $t$), and the fact that $Q$ may be assumed to be large (since $Q=X^\xd$ and $X$ is large), together imply that \eqref{M:eq:MinorF0BB3} is
        \<
            \label{M:eq:MinorF0BB4}
            \less \fr{X}{k} \Big[ Q^{2(\xk-\xn)} + Q^{\xk+\xn-1} X^\xe \Big] = \fr{X}{k} \Big[ X^{-2\xd(\xn-\xk)} + X^{-\xd(1-\xk-\xn)+\xe} \Big].
        \>
    Taking $\xe < \tf15\xd(1-\xk-\nu)$, we deduce that
        \<
            \label{M:eq:MinorF0cX}
            F_0(\rho^k e(k\xa)) \less \fr{1}{k} \lf[ X^{1-2\xd(\xn-\xk)} + X^{1-\fr45\xd(1-\xk-\xn)} \rh],
        \>

    and applying Lemma \ref{F:lem:Fundamental} using \eqref{M:eq:MinorF0cX} and $K = Q^\xk = X^{\xd\xk}$, it follows that
		\[
			\Psi_0(\rho e(\xa)) \less X^{1-2\xd(\xn-\xk)} + X^{1-\fr45\xd(1-\xk-\xn)} + X^{1-\xd\xk}.
        \]
    Setting $\xd = \tf25$, $\xk = \tf14$, and $\nu = \tf25$, we see that these satisfy the conditions
        \[
            \nu < 1-\xk, \quad \xk < \nu, \quad\text{and}\quad \xd(1-\xk-\xn) \leq \tf37(1-\xd\xn)   
        \] 
    and yield the bound
        \[
            \Psi_0(\rho e(\xa)) \less X^{\fr{17}{25}} + X^{\fr{111}{125}} +  X^{\fr{9}{10}} \less X^{\fr{9}{10}},
        \]
    and the proposition follows.
\end{proof}

\section{The major arcs}

We now analyze $\Psi(\rho e(\xa))$ for $\xa \in \fM(X,Q)$, which includes when $\xa$ is in $\fP$ or $\fP_*$. Although the choice $Q = X^{2/5}$ mentioned in the proof of Proposition \ref{M:prop:Minor} is ultimately used, we again treat $Q$ as a free parameter until this is choice is necessary.
In truth, we need not further analyze $\Psi_1(\rho e(\xa))$ since Corollary \ref{M:lem:Psi1Uniform} provides a satisfactory estimate already. While our analysis of $\Psi_0(\rho e(\xa))$ on the minor arcs requires estimates of the exponential sums $S_{\mu^2}(t,\xq)$, our analysis on the major arcs requires results on sums of $\mu^2(n)$ as $n$ runs over arithmetic progressions. 

We recall that $\mu(n) \neq 0$ if and only if $p^2 \nmid n$ for all prime $p$, and that such $n$ are said to be squarefree or \emph{quadratfrei}. For positive $q$ and $r$, not necessarily coprime, we define
    \[
        \cQ(t;q,r) = \sum_{\substack{n\leq t\\ n \equiv r \mod{q}}} \mu^2(n), 
    \]
which is simply the number of squarefree $n \leq t$ that are congruent to $r$ modulo $q$.
The following function determines the asymptotic behavior of $\cQ(t;q,r)$, so it is convenient to define it now.

\begin{definition}
    If $(q,r)$ is squarefree set
        \<
            \label{eq:g}
            g(q,r) := \sum_{\substack{n=1 \\ (n^2,q)\;|\;r}}^\infty \frac{\mu(n)(n^2,q)}{n^2 q},
        \>
    and set $g(q,r):=0$ otherwise. 
\end{definition}

We note that \eqref{eq:g} immediately implies that $g(q,r) = g(q,(q,r))$. Although the following result is well known, we include a proof here for completeness.

\begin{lemma}
    \label{M:lem:SqFreeAsymp}
    For all positive integers $q$ and $r$ and all sufficiently large $t$, one has
	    \[
		    \cQ(t;q,r) = g(q,r) t + O(t^{1/2})
        \]
    with implicit constant independent of $q$ and $r$.
\end{lemma}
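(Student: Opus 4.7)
The plan is to detect the squarefree condition via the identity $\mu^2(n) = \sum_{d^2 \mid n} \mu(d)$, substitute into $\cQ(t;q,r)$, and interchange the order of summation. Since $d^2 \mid n \leq t$ forces $d \leq t^{1/2}$, this rewrites
\[
    \cQ(t;q,r) = \sum_{d \leq t^{1/2}} \mu(d) \, N_d(t;q,r),
\]
where $N_d(t;q,r)$ counts the integers $n \leq t$ satisfying simultaneously $n \equiv r \mod{q}$ and $d^2 \mid n$.

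To evaluate $N_d(t;q,r)$, I would write $n = d^2 m$, merging the two congruence conditions into the single congruence $d^2 m \equiv r \mod{q}$. This is solvable iff $(d^2,q) \mid r$, and when solvable the admissible residues of $m$ form a single arithmetic progression modulo $q/(d^2,q)$. Counting lattice points up to $m \leq t/d^2$ then yields
\[
    N_d(t;q,r) = \mathbf{1}\big[(d^2,q) \mid r\big] \cdot \lh( \frac{t(d^2,q)}{d^2 q} + O(1) \rh)
\]
with an absolute implicit constant. Summing the $O(1)$ term over $d \leq t^{1/2}$ immediately contributes the claimed $O(t^{1/2})$ error.

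The remaining main term is the truncated sum $t \sum_{d \leq t^{1/2},\, (d^2,q)\mid r} \mu(d)(d^2,q)/(d^2 q)$. Completing this to an infinite sum reproduces the definition \eqref{eq:g} of $g(q,r)$, and the completion error is controlled via $(d^2,q)/q \leq 1$, which makes each omitted term $\less 1/d^2$ and so the total tail $t \cdot O(t^{-1/2}) = O(t^{1/2})$. Assembling the pieces gives $\cQ(t;q,r) = g(q,r)t + O(t^{1/2})$ with implicit constant independent of $q$ and $r$. The convention $g(q,r):=0$ when $(q,r)$ fails to be squarefree is automatically consistent: in that case some prime $p$ satisfies $p^2 \mid (q,r)$, so every $n \equiv r \mod{q}$ is divisible by $p^2$ and $\cQ(t;q,r) \equiv 0$, and the same derivation then forces the corresponding series to vanish. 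The main obstacle is purely technical---verifying that the $O(1)$ in the lattice count and the $\less 1/d^2$ tail bound are genuinely uniform in $q$ and $r$---which reduces to the absolute estimates $|\lfloor X/M\rfloor - X/M| \leq 1$ and $(d^2,q) \leq q$.
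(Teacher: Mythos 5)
Your proposal is correct and follows essentially the same route as the paper: both apply $\mu^2(n)=\sum_{d^2\mid n}\mu(d)$, reduce to counting $m\leq t/d^2$ with $d^2m\equiv r\ (q)$ (solvable precisely when $(d^2,q)\mid r$, giving density $(d^2,q)/(d^2q)$ with an absolute $O(1)$), and complete the resulting sum over $d\leq t^{1/2}$ to the infinite series defining $g(q,r)$ at a cost of $O(t^{1/2})$. The only cosmetic difference is that the paper organizes the solvability condition by summing over $d=(b^2,q)\mid r$ rather than via an indicator, and your remark verifying consistency of the convention $g(q,r)=0$ for non-squarefree $(q,r)$ is a small point the paper leaves implicit.
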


\begin{proof}
    Writing $n=ab^2$ with $a$ squarefree, and using the well known identity 
        \[
            \mu^2(n) = \sum_{d^2\mid n} \mu(d),
        \] 
    we rearrange terms in the sum $\cQ(t;q,r)$ to find that
        \[ 
        \cQ(t;q,r) 
            = \sum_{b \leq \rt{t}} \mu(b) \bigg( \sum_{\substack{a \leq t/b^2 \\ ab^2 \equiv r \mod{q}}} 1 \bigg)
            = \sum_{d\;|\;r} \sum_{\substack{b\leq\rt{t} \\ (b^2,q)=d}} \mu(b) \bigg(\sum_{\substack{a \leq t/b^2 \\ ab^2 \equiv r\mod{q}}} 1 \bigg).
        \]
    For any $b$ let $d=(b^2,q)$, let $b^2=\xb d$, let $q=\xq d$, and let $r=\xr d$. Then $ab^2\equiv r\mod{q}$ if and only if $a\xb \equiv\xr\mod{\xq}$, and defining $\xb'$ so that $\xb'\xb \equiv 1 \mod{\xq}$ (valid since $(\xb,\xq)=1$), we have that $ab^2 \equiv r \mod{q}$ if and only if $a \equiv \xb'\xr\mod{\xq}$. It follows that
        \[ \cQ(t;q,r)  
            = \sum_{d\;|\;r} \sum_{\substack{b\leq\rt{t} \\ (b^2,q)=d}} \mu(b) \bigg(\sum_{\substack{a \leq t/b^2 \\ a \equiv \xb'\xr\mod{\xq}}} 1\bigg) 
            = \sum_{d\;|\;r} \sum_{\substack{b\leq\rt{t} \\ (b^2,q)=d}} \mu(b) \lf(\frac{t}{b^2\xq} + \O{1} \rh).
        \]
    Writing $1/(b^2\xq) = d/(b^2q)$ we deduce that 
        \<  
            \label{M:eq:cQAsymp}
            \cQ(t;q,r) = t \sum_{\substack{b\leq\rt{t} \\ (b^2,q)\;|\;r}} \frac{\mu(b)(b^2,q)}{b^2q} + O(t^{1/2}),
        \>
    noting the implicit constant to be independent of $q$ and $r$. Extending the sum over $b$ in equation \eqref{M:eq:cQAsymp} to sum over all $b \geq 1$ at a cost of $O(t^{-1/2})$, we conclude that
        \[
            \cQ(t;q,r) = t \bigg(\sum_{\substack{n=1 \\ (n^2,q)\;|\;r}}^\infty \frac{\mu(n)(n^2,q)}{n^2 q}\bigg) + \O{t^{1/2}}
        \]
    as claimed, and the implicit constant is again independent of $q$ and $r$. 
\end{proof}

We now record a lemma on an exponential sum involving the constants $g(q,r)$.

\begin{definition}
    Let $G(q)$ be the multiplicative function defined on prime powers by         
        \[
            G(p^i)= 
            \begin{cases}
                - (p^2 - 1)^{-1} & i=1,2, \\
                0 & \text{otherwise.}
            \end{cases}
        \] 
\end{definition}

As shown in \cite{vaughan2005variance}*{Lem.\ 2.5}, for all $q$ one has
    \<
        \label{eq:gsum}
        \sum_{r = 1}^q e(r/q) g(q,r) = \xz(2)^{-1}G(q).
    \>
We now derive the primary asymptotic formula for our major arcs. 

\begin{proposition}
    \label{M:prop:Major}
    Fix $A>0$, let $Q=X^{2/5}$, and suppose that $\xa = a/q+\xb$ with $0\leq a \leq q \leq Q$, $(a,q)=1$, and $|\xb| \leq Q/(qX)$. There exists a real constant $\Vmaj(q)$ such that, as $X\to\infty$, one has
        \<
            \label{M:eq:Major}
            \Psi(\rho e(\xa)) = \Vmaj(q)\pfr{X}{1-2\pi iX\xb}  + O(X (\logX)^{-A}),
        \>
    with implicit constant independent of $q$ and $a$, and with $V(q)$ given in the following manner. Namely, letting $\tilde{V}(q)$ be the multiplicative function defined on prime powers $p^k>1$ via
        \<
            \label{eq:Vtilde}
            \tilde{V}(p^k) = \begin{cases}
                0 & k=1, \\
                -p^2 & k \geq 2,
            \end{cases}
        \>
    one has
        \<
            \label{eq:VVtilde}
            V(q) = \begin{cases}
                (2q)^{-2} \tilde{V}(q) & 2 \nmid q, \\
                q^{-2} \tilde{V}(q/2) & 2 \mid q.
            \end{cases}
        \>
    In particular, one has $\Vmaj(1) = \Vmaj(2) = \tfrac{1}{4}$, $\Vmaj(4)=0$, and $|\Vmaj(q)| \leq \frac{1}{16}$ for all other $q$.
\end{proposition}

\begin{proof}[Proof of Proposition \ref{M:prop:Major}.]  
    Let $\tau := X^{-1}(1-2\pi iX\xb)$ so that $\rho e(\xa) = e(a/q)e^{-\tau}$. As $\Psi_1(\rho e(\xa)) \less X(\logX)^{-A}$ for $X > X_0(A)$, it suffices to show that
    	\<
            \label{M:eq:MajorPsi0Asymp}
    		\Psi_0(\rho e(\xa)) = \Vmaj(q) \tau^{-1} + O(X^{\fr{9}{10}}).
    	\>
    For all $k \in \nn$ set
        \[
            q_k := q / (q,k) \qqand a_k := a k / (q,k).
        \]
    Recalling definition \eqref{F:eq:F0F1Defin} we have $F_0(z) = \sum_{n=1}^\infty \mu^2(n) z^n$, and grouping summands into residue classes modulo $q_k$ we have
		\<
			\label{M:eq:MajorF0Term}
			F_0( \rho^k e(k\xa) )  = \sum_{n=1}^\infty \mu^2(n) e\lf(\frac{nka}{q}\rh) e^{-nk \tau} 
			= \sum_{r =1}^{q_k}  e\pfr{a_k r}{q_k} \sum_{ n \equiv r \mod{q_k}} \mu^2(n) e^{-nk \tau}.
		\>
	Summing by parts and applying Lemma \ref{M:lem:SqFreeAsymp}, we find that
		\<
            \label{M:eq:MajorF0Int}
			\sum_{n \equiv r \mod{q_k}} \mu^2(n)e^{-nk\tau} = k\tau \int_0^\infty e^{-tk\tau}\cQ(t;q_k,r) \,dt =   k\tau\int_0^\infty e^{-tk\tau}(tg(q_k,r)+O(t^{\fr12})) \,dt.
		\>
    As $|\xt| \less X^{-1}(1 + X|\xb|) \less Q/(qX)$, the error term here is
		\<
            \label{M:eq:MajorIntErr}
			k\tau \int_0^\infty e^{tk\tau}O(t^{1/2}) \less \frac{kQ}{qX} \int_0^\infty e^{-tk/X} t^{1/2} \,dt \less \frac{kQ}{qX} \pfr{X}{k}^{3/2} = \frac{Q}{q}\pfr{X}{k}^{1/2},
		\>
	and we remark that it is in equation \eqref{M:eq:MajorF0Int} and inequality \eqref{M:eq:MajorIntErr} that the independence (from $q_k$ and $r$) of the constant in Lemma \ref{M:lem:SqFreeAsymp} is used.

    To consider the remaining integral in equation \eqref{M:eq:MajorF0Int}, we first rewrite
		\<
            \label{M:eq:MajorMainInt}
			g(q_k, r) \int_0^\infty e^{-t k \tau}  t k \tau \,dt 
			= (k \tau)^{-1} g(q_k, r) \int_0^\infty e^{-t k \tau }  t k \tau \,d(t k \tau).
		\>
    Observing that $|\arg k\tau| = |\arg(1-2\pi iX\xb)| < \fr{\pi}{2}$ for all $|\xb|\leq Q/qX$, we recognize the integral in \eqref{M:eq:MajorMainInt} to be equal to $\xG(2)$ (see, e.g., \cite{montgomery2007multiplicative}*{App.\ C}), and conclude that
        \<
            \label{M:eq:MajorGammaInt}
            g(q_k,r) \int_0^\infty e^{-tk\tau} tk\tau \,dt = \frac{g(q_k,r)}{k \tau}.
        \>

    Returning to equation \eqref{M:eq:MajorF0Term} and using \eqref{M:eq:MajorIntErr} and \eqref{M:eq:MajorGammaInt}, we find that
    	\<
            \label{M:eq:MajorF0Sum2}
    		\sum_{r =1}^{q_k} e\pfr{a_k r}{q_k} \sum_{n \equiv r \mod{q_k}} \mu^2(n)  e^{-nk \tau}
    	    = \frac{1}{k \tau} \sum_{r =1}^{q_k} e\pfr{a_k r}{q_k}  g(q_k, r) + \O{\frac{X^{1/2}Q}{k^{1/2}}}.
    	\>
	As $(a_k, q_k) = 1$, we define $a_k'$ so that $a_k a_k' \equiv 1 \mod{q_k}$ and recall from \eqref{eq:g} that $g(q,r)=g(q,(q,r))$ to deduce that
		\<
            \label{eq:gGcdSwap}
			g(q_k, r) = g(q_k, (q_k, r)) = g(q_k, (q_k, r a_k')) = g(q_k, r a_k'). 
      	\>
	By equation \eqref{eq:gGcdSwap} one has
		\<
            \label{eq:gExpSum}
			\sum_{r =1}^{q_k} e(\nfr{a_k r}{q_k})  g(q_k, r) 
			= \sum_{r =1}^{q_k} e(r/q_k)  g(q_k, r a_k')
			= \sum_{r =1}^{q_k} e(r/q_k)  g(q_k, r),
		\>
	which by equation \eqref{eq:gsum} is equal to $\xz(2)^{-1}G(q_k)$. 
    
    Combining equations \eqref{M:eq:MajorF0Int}, \eqref{M:eq:MajorF0Sum2}, and \eqref{eq:gExpSum} we conclude that
		\<
          \label{M:eq:MajorF0Sum3}
		    F_0(\rho^k e(k\xa)) =  \frac{G(q_k)}{\xt k \xz(2)} + O\pfr{X^{1/2}Q}{k^{1/2}},
      \>
	but we note that Lemma \ref{F:lem:Fundamental} cannot be applied here, as the quantity $\cX_0(z)$ indicated by \eqref{M:eq:MajorF0Sum3} is dependent on $k$. Thus considering the definition of $\Psi_0(z)$ directly, it follows from \eqref{M:eq:MajorF0Sum3} that
		\<
            \label{M:eq:MajorPsi0}
            \Psi_0(\rho e(\xa)) = \sum_{k \equiv 0\mod2} \fr{1}{k} F_0 ( \rho^k e(k\xa)) 
            = (\xt\xz(2))^{-1} \sum_{k \equiv 0\mod2} \frac{G(q_k)}{k^2} + 
            O(X^{1/2}Q), 
		\>
    and we are motivated to define
        \<
            \label{eq:VDefin}
            \Vmaj(q) = \xz(2)^{-1} \sum_{k \equiv 0 \mod2} \frac{G(q_k)}{k^2} 				= \tfrac{1}{4} \xz(2)^{-1} \sum_{k=1}^\infty G\left(\frac{q}{(q,2k)}\right) k^{-2}.
        \>

    So that the $O(X^{1/2}Q)$ term in equation \eqref{M:eq:MajorPsi0} is $o(X)$, it suffices to take $Q=X^{2/5}$, and equation \eqref{M:eq:MajorPsi0Asymp} (and thereby \eqref{M:eq:Major}) is established. We note that the implicit constants throughout our proof of equation \eqref{M:eq:MajorPsi0Asymp} are independent of $q$ and $a$, and therefore the implicit constant in equation \eqref{M:eq:Major} is independent of $q$ and $a$ as well.
    
    Finally we verify the claimed properties of $V(q)$. If $2 \nmid q$ then $(q,2k)=(q,k)$, and so grouping summands by $(q,k)$ we have
        \[
            V(q) = \fr{3}{2\pi^2} \sum_{d | q} G\left(\frac{q}{d}\right) \sum_{\substack{k=1 \\ (q,k)=d}}^\infty \frac{1}{k^2} = \fr{3}{2\pi^2} \sum_{d | q} \frac{G(q/d)}{d^2} \sum_{\substack{k=1 \\ (q/d,k)=1}}^\infty \frac{1}{k^2}.
        \]
    This latter quantity is equal to
        \[
            \tf14 \sum_{d | q} d^{-2}G(q/d) \prod_{p | qd^{-1}}(1-p^{-2}) = \tf14 q^{-2} \sum_{d | q} d^2G(d) \prod_{p | d}(1-p^{-2}) = \tf14 \tilde{V}(q).
        \]
	If $2 \mid q$, say $q = 2r$, then $q/(q,2k) = r/(r,k)$ and we similarly find that
        \[
            V(q) = \frac{3}{2\pi^2} \sum_{d | r} G\left(\frac{r}{d}\right) \sum_{\substack{k=1 \\ (r,k)=d}}^\infty \frac{1}{k^2} = (2r)^{-2} \sum_{d | r} d^2G(d) \prod_{p | d}(1-p^{-2}).
        \]
    
    Thus, setting
        \[
            \tilde{V}(n) := \sum_{d | n} d^2G(d) \prod_{p | d}(1-p^{-2}),
        \]
    it is apparent that $\tilde{V}$ is multiplicative (since $G$ is), and we deduce that 
        \[
            V(q) = \begin{cases}
                (2q)^{-2} \tilde{V}(q) & 2 \nmid q, \\
                q^{-2} \tilde{V}(q/2) & 2 \mid q.
            \end{cases}
        \]
    For primes $p$ we see that
        \[
            \tilde{V}(p) = 1 + p^2G(p)(1-p^{-2}) = 1 - 1 = 0,
        \]
    and for prime powers $p^k$ with $k \geq 2$ we see that
        \[
            \tilde{V}(p^k) = 1 + p^{2}G(p)(1-p^{-2}) + p^4G(p^2)(1-p^{-2})= 1 - 1 - p^2 = -p^2,
        \]
    thereby validating formulae \eqref{eq:Vtilde} and \eqref{eq:VVtilde}. The facts that $V(1)=V(2)=\tf14$, that $V(4)=0$, and that $|V(q)| \leq \tf{1}{16}$ for all other $q$ are verified at once. 
\end{proof}

\section{The principal arcs I}
\label{sec:PrincArcsI}

We recall that the principal arcs are
    \begin{align*}
        \fP &= \{ \xa \in [0,1) : \|\xa\| \leq X^{-1}(\logX)^{-\nfr14} \}, \\
        \fP_* &= \{ \xa \in [0,1) : |\xa-\tf12| \leq X_*^{-1}(\logX_*)^{-\nfr14} \}.
    \end{align*}
For brevity, let
    \<
        \xh := X^{-1}(\logX)^{-\nfr14} \qqand \xh_* := X_*^{-1}(\logX_*)^{-\nfr14}.
    \>
Because $\Psi(r e(\xa)) = \Psi(r e(\xa-1))$ for all $\xa$, it is convenient to identify the sets
    \<
        \label{eq:prcId}
        \{ e(\xa) : \xa \in \fP \} \qqand \{ e(\xb) : \text{$\xb\in\rr$ and $|\xb|\leq\xh$} \}
    \>
and consider $\Psi(\rho e(\xa))$ with $\xa \in \fP$ as $\Psi(\xr e(\xb))$ with $|\xb| \leq \xh$. As $e(1/2)=-1$, we may similarly identify the sets
    \<
        \label{eq:prc*Id}
        \{ e(\xa) : \xa \in \fP_* \} \qqand \{ -e(\xb) : \text{$\xb\in\rr$ and $|\xb|\leq\xh_*$} \}   
    \>
and consider $\Psi(\xr_* e(\xa))$ with $\xa\in\fP_*$ as $\Psi(-\xr_* e(\xb))$ with $|\xb|\leq\xh_*$.

Despite the above definitions' in terms of $X_*$, as discussed in section \ref{sec:prelim} we do not presently distinguish $X$ and $X_*$, because this and the following section do not involve solutions to the saddle-point equations \eqref{eq:sdp} and \eqref{eq:sdp*}. Considering this with \eqref{eq:prcId} and \eqref{eq:prc*Id} then, we see that our results on $\Psi(\pm\xr e(\xb))$ with $|\xb| \leq \xh$ sufficiently cover the analyses of $\Psi(\xr e(\xa))$ with $\xa$ in the principal arcs.

We now introduce some convenient notation for the analyses of $\Psi(\xr e(\xa))$ when $\xa$ is in the principal arcs. Because the discussions below are applicable to general $f:\nn\to\signs$, we temporarily return to discussion of a general $\Psi(z)=\Psi(z,f)$.

For complex $s$ we write $s=\xs+it$, and $\xs$ and $t$ always represent the real and imaginary parts of some $s\in\cc$, respectively. Integrals $\frac{1}{2\pi i} \int$ are abbreviated using dashed integrals $\dint$. Let $\Dir{f}{s}$ denote the Dirichlet series $\sum_{n=1}^\infty f(n) n^{-s}$ and let $\xs_0 = \xs_0(f)$ and $\xs_1 = \xs_1(f)$ denote the abcissae of absolute convergence for $\Dir{f^2}{s}$ and $\Dir{f}{s}$, respectively. For simplicity all $\xs_0$ and $\xs_1$ below are assumed to be positive. 
We recall from the definitions \eqref{F:eq:F0F1Defin} and \eqref{F:eq:Psi0Psi1Defin} that
    \[
        \Psi(z) = \Psi_0(z) + \Psi_1(z) = \sum_{\substack{k \equiv 0\mod2}} \sum_{n=1}^\infty \frac{f^2(n)}{k} z^{nk} + \sum_{k\equiv 1\mod2} \sum_{n=1}^\infty \frac{f(n)}{k} z^{nk}.
    \]

For real $\xb$ and $z=\xr e(\xb)$ we again set $\tau := \fr{1}{X}(1-2\pi iX\xb)$ so that $z= e^{-\xt}$. Using the well known Cahen-Mellin formula
    \[
        e^{-w} = \ldint{\xs} \xG(s)z^{-s} \,ds \qquad (\Re w>0,\,\,\xs > 0),
    \]
we express $\Psi_0(z)$ and $\Psi_1(z)$ as sums of integrals, and interchange the summations and integrations using the series' absolute convergence, to derive the formulae
	\begin{align}
        \label{M:eq:Psi0Mellin}
	    \Psi_0(z) = \Psi_0(e^{-\xt}) &= \ldint{1+\xs_0} 2^{-s-1}\xz(s+1) \Dir{f^2}{s} \xG(s)\tau^{-s} \,ds, \\
        \label{M:eq:Psi1Mellin}
	    \Psi_1(z) = \Psi_1(e^{-\xt}) &= \ldint{1+\xs_1} (1-2^{-s-1}) \xz(s+1) \Dir{f}{s} \xG(s)\tau^{-s} \,ds.
    \end{align}
We note that if $f$ is nonnegative the distinction between $\Psi_0$ and $\Psi_1$ is unnecessary, and we derive a formula like formula (6.2.6) of \cite{andrews1976partitions} as expected. In general, recalling that $\PsiD{j}(z):= \mop{z}^j\Psi(z)$, for $j \geq 0$ one has
	\<
        \label{eq:MopjPsi}
		\PsiD{j}(z) = \sum_{k=1}^\infty \sum_{n=1}^\infty k^{j-1} n^{j} f^k(n) z^{nk},
	\>
yielding (after a change of variable of $s$ to $s+j$) the general formulae
	\begin{align}
		\label{eq:PsiMellin0}
        (\Psi_0)_{(j)}(z) &= \ldint{1+\xs_0} 2^{-s-1} \xz(s+1) \fD[f^2;s]\xG(s+j)\tau^{-s-j} \,ds, \\
        \label{eq:PsiMellin1}
        (\Psi_1)_{(j)}(z) &= \ldint{1+\xs_1} (1-2^{-s-1}) \xz(s+1) \Dir{f}{s} \xG(s+j)\tau^{-s-j} \,ds.
    \end{align}

When $z = -\rho e(\xb) = -e^{-\tau}$ our derivations require only small changes. As $(-z)^{nk} = z^{nk}$ when $k$ is even, one sees from equation \eqref{eq:MopjPsi} that $\Psi_0(z)$ and the derivatives $(\Psi_0)_{(j)}(z)$ are even functions, whereby \eqref{eq:PsiMellin0} may be reused for $(\Psi_0)_{(j)}(-z)$. When $k$ is odd the equality $(-1)^{nk} = (-1)^n$ shows that 
    \<
        \label{eq:Psi1Neg1}
        (\Psi_1)_{(j)}(-z) = \sum_{k\equiv 1\mod2} \sum_{n=1}^\infty (-1)^n n^{-j} k^{-j-1} f(n) z^{nk}, 
    \>
and one derives a formula like \eqref{eq:PsiMellin1} by replacing $\Dir{f(n)}{s}$ with $\Dir{(-1)^n f(n)}{s}$.

\section{The principal arcs II}
\label{sec:PrincArcsII}

We now return to the specific study of $\Psi(z,\mu)$, again abbreviating it to $\Psi(z)$. In addition we maintain the definitions $\eta = X^{-1}(\logX)^{-1/4}$ and $\xt = X^{-1}(1-2\pi iX\xb)$ so that $\pm\xr e(\xb) = \pm e^{-\xt}$. The goal of this section is the proof of the following proposition.

\begin{proposition}
    \label{M:prop:Princ}
    Fix $A > 0$. For all $j \geq 0$, all $X > X_0(A)$, and all $|\xb|\leq \eta$ one has
        \<
            \label{M:eq:Momop}
		    \PsiD{j}(\pm \rho e(\xb)) = \frac{j!}{4} \pfr{X}{1-2\pi iX\xb}^{j+1} + O_j(X^{j+1}(\logX)^{-A})
	    \>
    and
        \<
            \label{M:eq:Deriv}
            (\pm1)^j\PsiD{j}(\pm \rho e(\xb)) = \frac{j!}{4} \pfr{X}{1-2\pi iX\xb}^{j+1} + O_j(X^{j+1}(\logX)^{-A}).
        \>
\end{proposition}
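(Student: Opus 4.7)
The strategy is to apply the Mellin representations \eqref{eq:PsiMellin0} and \eqref{eq:PsiMellin1} specialized to $f=\xm$, together with the analogue for $z=-\xr e(\xb)$ implied by \eqref{eq:Psi1Neg1}, and to shift each contour past $s=1$. Since
\[
    \Dir{\xm^2}{s}=\fr{\xz(s)}{\xz(2s)} \qquad\text{and}\qquad \Dir{\xm}{s}=\fr{1}{\xz(s)},
\]
only $(\Psi_0)_{(j)}$ produces a residue at $s=1$; a direct computation gives
\[
    \operatorname*{Res}_{s=1}\brk{2^{-s-1}\xz(s+1)\fr{\xz(s)}{\xz(2s)}\xG(s+j)\xt^{-s-j}}
    =\tf14\xz(2)\cdot\tf{1}{\xz(2)}\cdot j!\cdot\xt^{-(j+1)}
    =\tf{j!}{4}\pfr{X}{1-2\pi iX\xb}^{j+1},
\]
which is precisely the claimed main term. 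The analysis of $(\Psi_1)_{(j)}$ will produce no residue at $s=1$ because $1/\xz(s)$ vanishes there, so everything from this series goes into the error.

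First I would treat $z=\xr e(\xb)$. Beginning from \eqref{eq:PsiMellin0}, shift the line $\Re s=1+\xs_0$ to a De la Vall\'ee Poussin contour of shape $\Re s=1-c/\log(|t|+2)$, lying just inside the classical zero-free region for $\xz(2s)$, truncated at height $|t|\leq T$ with $T=\exp\brk{c\log X/(A\log\log X)}$. The exponential decay of $\xG(s+j)$ disposes of the truncation error and the horizontal legs. On the shifted vertical piece, the convexity bound on $\xz(s)$, the estimate $1/|\xz(2s)|=O(1)$ (since the contour stays well to the right of $\Re(2s)=1$), and the bound $|\xt^{-s-j}|\asymp X^{\Re s+j}\leq X^{j+1}X^{-c/\log T}$ combine to give a contribution of size $O_j(X^{j+1}(\log X)^{-A})$. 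The parallel argument applied to \eqref{eq:PsiMellin1} uses the zero-free-region estimate $1/|\xz(s)|\less\log(|t|+2)$ in place of the trivial bound on $1/\xz(2s)$; poles at $s=0,-1,\ldots$ contribute quantities of order at most $X^{j}$, absorbed into the error.

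Next I would handle $z=-\xr e(\xb)$. Because the defining sum of $\Psi_0$ runs over even $k$ only, $(\Psi_0)_{(j)}(z)$ is an even function of $z$, so $(\Psi_0)_{(j)}(-\xr e(\xb))=(\Psi_0)_{(j)}(\xr e(\xb))$ and the main term is unchanged. For the $\Psi_1$ contribution, equation \eqref{eq:Psi1Neg1} gives the Mellin representation with $\Dir{\xm}{s}$ replaced by
\[
    \sum_{n=1}^{\infty}\frac{(-1)^n\xm(n)}{n^s}=-\fr{1+2^{-s}}{(1-2^{-s})\xz(s)},
\]
which still vanishes at $s=1$ and enjoys identical zero-free-region bounds, so the error estimate carries over verbatim. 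This proves \eqref{M:eq:Momop}. Equation \eqref{M:eq:Deriv} is obtained by running the identical Mellin-and-shift argument for $\Psid{j}$ in place of $\PsiD{j}$; the sign $(\pm 1)^j$ on the left precisely absorbs the sign that appears when differentiating the factor $(\pm e^{-\xt})^{nk}$ termwise and evaluating at $\pm\xr e(\xb)=\pm e^{-\xt}$.

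The principal technical difficulty is the compatible choice of the truncation height $T$ and the horizontal displacement $c/\log(|t|+2)$, so that the savings $X^{-c/\log T}$ coming from shifting the contour precisely realize the required factor $(\log X)^{-A}$, uniformly in $|\xb|\leq\xh$ and with implicit constants depending only on $j$ and $A$. Executing this optimization while tracking the polynomial factors produced by Stirling's estimate for $\xG(s+j)$ and by the convexity bound on $\xz$, and verifying that the residues picked up to the left of $s=1$ (at $s=0,-1,\ldots$ from the poles of $\xz(s+1)$ and $\xG(s+j)$) produce nothing larger than $X^{j}$, constitutes the main bookkeeping component of the proof.
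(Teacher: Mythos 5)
Your proposal is correct and follows essentially the same route as the paper: Mellin representations \eqref{eq:PsiMellin0}--\eqref{eq:PsiMellin1}, a contour shift picking up the residue of $\xz(s)/\xz(2s)$ at $s=1$ for the $\Psi_0$ piece, the zero-free region and $1/|\xz(s)|\less\log|t|$ for the $\Psi_1$ piece, evenness of $(\Psi_0)_{(j)}$ and the Dirichlet series $-\tfrac{1+2^{-s}}{1-2^{-s}}\xz(s)^{-1}$ for $z=-\xr e(\xb)$. Two small divergences: the paper pushes the $\Psi_0$ contour all the way to $\Re s=\tf12+\xe$, getting a power-saving error $O_j(X^{j+\fr12+\xe})$ rather than your $(\logX)^{-A}$ saving (both suffice here, but the stronger bound is what makes the $\Psi_1$ term the bottleneck); and for \eqref{M:eq:Deriv} the paper does not redo the Mellin analysis for $\Psid{j}$ but instead uses the Stirling-number identity $z^j\Psid{j}(z)=\sum_{k\leq j}s(j,k)\PsiD{k}(z)$ together with $(\pm\xr e(\xb))^j=(\pm1)^j+O_j(X^{-1})$, which converts \eqref{M:eq:Momop} into \eqref{M:eq:Deriv} purely algebraically --- your sketch of that step is the one place where the plan is under-specified, since no integral representation for $\Psid{j}(e^{-\xt})$ is set up in the paper and deriving one termwise is messier than the operator identity.
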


We record two useful, classical properties of $\xz(s)$ and $\xG(s)$. First, there exist universal positive constants $c$ and $t_0$ such that $\xz(s) \neq 0$ when
    \<
        \label{eq:ZetaZeroFree}
        |t|>t_0 \qqand \xs \geq 1 - \frac{c}{\log|t|},
    \>
and on this region one has \cite{montgomery2007multiplicative}*{Thm.\ 6.6 and 6.7}
    \<
        \label{eq:ZetaBB}
        |\xz(\xs+it)|^{-1} \less \log{t} \less t^\xe.
    \>
Second, one has \cite{paris2001asymptotics}*{ineq.\ (2.1.19)}
\<
    \label{eq:GammaBB}
    |\xG(s)| \less |s|^{\xs-\fr12} \expp{-\tf12 \pi|t| + \tf16 |s|^{-1}} \qquad (\xs>0). 
\>

\begin{lemma}
	\label{M:lem:PrincPsi1}
	Fix $A>0$. For all $j \geq 0$, all $X > X_0(A)$, and all $|\xb|\leq \xh$, one has
		\[
			(\Psi_1)_{(j)}(\rho e(\xb)) \less_j X^{j+1} (\logX)^{-A}.
		\]
\end{lemma}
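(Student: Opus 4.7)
The plan is to combine the Mellin--Barnes representation \eqref{eq:PsiMellin1} with the classical zero-free region \eqref{eq:ZetaZeroFree} for $\xz$, shifting the contour leftwards through the critical strip to exploit the bound $|\xz(s)|^{-1}\less\log|t|$ from \eqref{eq:ZetaBB}.

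Specializing \eqref{eq:PsiMellin1} to $f=\xm$ (so $\Dir{\xm}{s}=1/\xz(s)$ and $\xs_1=1$) gives
\[
(\Psi_1)_{(j)}(\rho e(\xb)) = \ldint{1+\xs_1} \frac{(1-2^{-s-1})\,\xz(s+1)\,\xG(s+j)}{\xz(s)}\,\xt^{-s-j}\,ds,
\]
where $\xt = X^{-1}(1-2\pi iX\xb)$. Since $|\xb|\leq\xh$, one has $|\xt|\asymp X^{-1}$ and $|\arg\xt|\less(\logX)^{-\fr14}$, so $|\xt^{-s-j}|\less X^{\xs+j}\exp(|t|\cdot O((\logX)^{-\fr14}))$ on any vertical line $\Re s=\xs$. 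I would then truncate the contour at height $T:=(\logX)^2$; by \eqref{eq:GammaBB} the Gamma factor decays like $e^{-\pi|t|/2}$, so the tail $|t|\geq T$ contributes $O(X^{2+j}e^{-T})$, which is negligible relative to the target.

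Next I would shift the truncated portion from $\Re s = 1+\xs_1$ to $\Re s = \xa$, where $\xa := 1 - c(A)\log\logX/\logX$ with $c(A)$ chosen larger than $A$ (plus slack for the log factors below). For large $X$ this line lies inside the zero-free region \eqref{eq:ZetaZeroFree} throughout $|t|\leq T$, since $c(A)\log\logX/\logX < c/\log(T+2)$. Critically, the shift crosses \emph{no} poles: $\xz(s+1)$'s only pole is at $s=0<\xa$, the poles of $\xG(s+j)$ lie at $s=-j,-j-1,\ldots$ all to the left of $\xa$, and the zeros of $\xz(s)$ are avoided; the horizontal connectors at $\Im s=\pm T$ are absorbed by the $\xG$-decay. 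On the shifted line, applying \eqref{eq:ZetaBB} and \eqref{eq:GammaBB} bounds the integrand by
\[
\less_j (\log|t|)\cdot|s+j|^{\xa+j-\fr12}\,e^{-\pi|t|/2}\,X^{\xa+j}\,e^{|t|\cdot o(1)},
\]
since $|\xz(s+1)|$ and $|1-2^{-s-1}|$ are bounded for $\Re s=\xa$. The $t$-integral converges to $\less_j X^{\xa+j}(\logX)^{O(1)}$, and since $X^{\xa}=X(\logX)^{-c(A)}$, the total is $\less_j X^{j+1}(\logX)^{-c(A)+O(1)}$, which becomes $\less_j X^{j+1}(\logX)^{-A}$ upon taking $c(A)$ large.

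The main obstacle will be juggling the two opposing constraints on $\xa$ and $T$: the line $\Re s=\xa$ must simultaneously (i) lie inside the zero-free region throughout $|t|\leq T$, i.e.\ $1-\xa\leq c/\log T$, and (ii) be far enough left that $X^{\xa-1}\leq(\logX)^{-A}$, i.e.\ $1-\xa\geq A\log\logX/\logX$. The polynomial choice $T=(\logX)^{2}$ together with $\xa = 1 - c(A)\log\logX/\logX$ resolves both constraints with room to spare; the remaining estimates are routine.
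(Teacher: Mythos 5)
Your proposal is correct and follows essentially the same route as the paper: truncate the Mellin--Barnes integral \eqref{eq:PsiMellin1}, shift the contour into the zero-free region \eqref{eq:ZetaZeroFree}, and bound $1/\xz$ via \eqref{eq:ZetaBB}. The only difference is parametric --- the paper takes $T=\exp(\sqrt{\logX})$ with a single, $A$-independent shift to $1-c/\log T$ (yielding a saving $e^{-c\sqrt{\logX}}$ that beats every power of $\logX$ at once), whereas your $T=(\logX)^2$ and $A$-dependent abscissa $1-c(A)\log\logX/\logX$ give exactly the $(\logX)^{-A}$ saving needed; both balances are valid.
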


\begin{proof}
	As $\xr e(\xb) = e^{-\xt}$ and $\Dir{\mu}{s} = 1/\xz(s)$, we have from \eqref{eq:PsiMellin1} that
        \<
            \label{M:eq:Psi1Int}
            (\Psi_1)_{(j)}(\rho e(\xb)) = \dint_{2-i\infty}^{2+i\infty} (1 - 2^{-s-1})\frac{\xz(s+1)}{\xz(s)}\xG(s+j)\tau^{-s-j} \,ds.
        \>
    
    Letting $T$ be a sufficiently large parameter of our choosing, we first truncate integral \eqref{M:eq:Psi1Int} at height $T$ with error term
        \<
            \label{M:eq:Psi1TruncError}
            \less \int_T^\infty \lf| \frac{\xG(2+j+it)\tau^{-j-2-it}}{\xz(2+it)} \rh| \,dt.
        \>
    Having assumed that $|\xb| \leq \eta = X^{-1}(\logX)^{-1/4}$, for sufficiently large $X$ one has
		\[
			|\arg\tau| = |\arg(1-2\pi iX\xb)| 
            \leq |\arctan(2\pi(\logX)^{-\nfr14})| \leq \fr{\pi}{4},
		\]
	where $\arctan(x)$ has values in $(-\tf{\pi}{2},\tf{\pi}{2})$, and since $|\xt|^{-1} \leq X$ this implies that
		\<
            \label{eq:tauAbsBB}
			|\tau^{-2-j-it}| = |\tau|^{-j-2} e^{t \arg \tau} \leq X^{j+2} e^{\pi t/4} \qquad (t>0). 
		\>
	Using inequalities \eqref{eq:ZetaBB}, \eqref{eq:GammaBB}, and \eqref{eq:tauAbsBB}, it follows that integral \eqref{M:eq:Psi1TruncError} is
		\[
			\less_j X^{j+2} \int_T^\infty t^{j+\nfr32+\xe} e^{-\pi t/4} \,dt \less_j X^{j+2} e^{-\pi T/5},
        \]
    whereby
        \<
            \label{M:eq:Psi1Trunc}
            (\Psi_1)_{(j)}(\rho e(\xb)) = \dint_{2-iT}^{2+iT} (1 - 2^{-s-1}) \frac{\xz(s+1)}{\xz(s)}\xG(s+j)\tau^{-s-j} \,ds + O_j( X^{j+2} e^{-\pi T/5}) 
        \>
	Let $\xs_* := 1 - c / \log T$, where $c$ is the constant of the zero free region \eqref{eq:ZetaZeroFree}. Using Cauchy's theorem, we see that the integral in \eqref{M:eq:Psi1Trunc} is
        \[
            = \lf[ \dint_{2-iT}^{\xs_*-iT} +\,\, \dint_{\xs_* -iT}^{\xs_*+iT} +\,\, \dint_{\xs_*+iT}^{2+iT} \rh] (1-2^{-s-1})\frac{\xz(s+1)}{\xz(s)}\xG(s+j)\tau^{-s-j} \,ds.
        \]
    Again using \eqref{eq:ZetaBB}, \eqref{eq:GammaBB}, and \eqref{eq:tauAbsBB}, we see that the integral from $\xs_*+iT$ to $2+iT$ here is
        \<
            \label{M:eq:Psi1Int24BB}
            \less \int_{\xs_*}^2 \lf| \fr{\xG(j+\xs+iT)\tau^{-j-\xs-iT}}{\xz(\xs+iT)} \rh| \,d\xs \less_j X^{j+2} T^{j+\nfr32+\xe} e^{-\pi T/4} \less_j X^{j+2} e^{-\pi T/5},
        \>
    and similarly for the integral from $2-iT$ to $\xs_*-iT$. Using similar arguments for the integral from $\xs_*-iT$ to $\xs_*+iT$, we find that
        \<
            \label{M:eq:Psi1Int3BB}
            \dint_{\xs_* -iT}^{\xs_* +iT} (1 - 2^{-s-1}) \frac{\xz(s+1)}{\xz(s)}\xG(s+j)\tau^{-s-j} \,ds \less_j X^{j+1-c/\log T} \log T.
        \>
    
    Combining equation \eqref{M:eq:Psi1Trunc} and inequalities \eqref{M:eq:Psi1Int24BB} and \eqref{M:eq:Psi1Int3BB}, we deduce that
		\<
            \label{eq:Psi1CrudeBB}
			(\Psi_1)_{(j)}(\rho e(\xb)) \less_j X^{j+1} \lf( X e^{-\pi T/5} + X^{-c/\log{T}} \log{T}\rh).
        \>
    Now letting $T=\exp(\rt{\logX})$, we have that
        \[
            X \expp{-\fr{\pi}{5} T} = \expp{\logX - \fr{\pi}{5} e^{\rt{\logX}} } \less \expp{ -\fr{\pi}{6} e^{\rt{\logX}} }
        \]
    and
        \[
            X^{-c/\log{T}} \log{T} 
            = \lf|\frac{X}{1-2\pi iX\xb} \rh|^{-c/\rt{\logX}} \rt{\logX} 
            \less e^{-c \rt{\logX}} \rt{\logX},
        \]
    and the result follows from inequality \eqref{eq:Psi1CrudeBB} since
        \[
            \expp{-\fr{\pi}{6} e^{\rt{\logX}}} + e^{-c\rt{\logX}} \rt{\logX} \less (\logX)^{-A}.
        \]
\end{proof}

The analysis of $(\Psi_0)_{(j)}$ is largely similar to that of $(\Psi_1)_{(j)}$. As $\Dir{\mu^2}{s} = \xz(s) / \xz(2s)$, equation \eqref{eq:PsiMellin0} becomes
    \<
        \label{M:eq:Psi0AsympInt}
        (\Psi_0)_{(j)}(\rho e(\xb)) = \ldint{2} 2^{-s-1} \xz(s+1) \frac{\xz(s)}{\xz(2s)} \xG(s+j)\tau^{-s-j} \,ds,
    \>
and we see that the integrand here has a simple pole at $s = 1$ with residue
    \<
        \label{eq:mures}
        2^{-2} \frac{\xz(2)}{\xz(2)} \xG(j+1)\tau^{-j-1} = \frac{j!}{4}\pfr{X}{1-2\pi iX\xb}^{j+1}.
    \>
Under the same assumptions as in Lemma \ref{M:lem:PrincPsi1}, we account for this residue and shift the line of integration in \eqref{M:eq:Psi0AsympInt} left to $\xs = \tf12+\xe$. We then bound
    \[
        \ldint{\fr12+\xe} 2^{-s-1} \xz(s+1)\frac{\xz(s)}{\xz(2s)} \xG(s+j)\tau^{-s-j} \,ds \less_j X^{j+\fr12+\xe}
    \]
using arguments like in the proof of Lemma \ref{M:lem:PrincPsi1}, and thereby deduce that
	\<
		\label{M:eq:Psi0Asymp}
		(\Psi_0)_{(j)}(\rho e(\xb)) = \frac{j!}{4}\pfr{X}{1-2\pi iX\xb}^{j+1} + O_j(X^{j+\fr12+\xe}).
	\>
    Equation \eqref{M:eq:Momop} then follows at once by combining Lemmata \ref{M:lem:PrincPsi1} and equation \eqref{M:eq:Psi0Asymp}.

We now consider $\PsiD{j}(-\rho e(\xb))$ for $|\xb| \leq \xh$, again recalling that $\xh = X^{-1}(\logX)^{-1/4}$. First, equation \eqref{eq:Psi1Neg1} shows that
	\[
		(\Psi_1)_{(j)}(-\rho e(\xb)) = (\Psi_1)_{(j)}(-e^{-\tau}) = \sum_{k\equiv 1\mod2} \sum_{n=1}^\infty (-1)^n \mu(n) n^j k^{j-1} e^{-nk \tau},
	\]
and we now require a closed formula for $\Dir{(-1)^n\mu(n)}{s} = \sum_{n=1}^\infty (-1)^n \mu(n)n^{-s}$. If $ 2 \mid k$ then $\mu(2k) = 0$, and if $2 \nmid k$ then $\mu(2k) = - \mu(k)$, whereby
    \[
        \sum_{k=1}^\infty \frac{\mu(2k)}{(2k)^s} = -2^{-s} \sum_{2 \,\nmid\, k} \frac{\mu(k)}{k^s} = -2^{-s} \prod_{\substack{p\text{ prime} \\ p > 2}} (1-p^{-s}) = -\pfr{2^{-s}}{1-2^{-s}} \frac{1}{\xz(s)}.
    \]
We similarly see that $\sum_{n\equiv 1\mod2} (-1)^n\mu(n)n^{-s} = -[ (1-2^{-s})\xz(s) ]^{-1}$, and we deduce that
    \<
        \label{M:eq:DirNeg1}
        \Dir{(-1)^n\mu(n)}{s} = - \pfr{1+2^{-s}}{1-2^{-s}}\frac{1}{\xz(s)}.
    \>

The only significant difference between $\Dir{\mu}{s}$ and $\Dir{(-1)^n\mu(n)}{s}$ is the addition of simple poles at points $s = 2\pi ik/\log 2$ for $k \in \zz$. Because the arguments used for Lemma \ref{M:lem:PrincPsi1} only involve $s$ with $\xs \geq 1-c/\log T$, the analysis of $(\Psi_1)_{(j)}(-\xr e(\xb))$ is identical to that of $(\Psi_1)_{(j)}(+\xr e(\xb))$, yielding the bound
    \[
        (\Psi_1)_{(j)}(-\xr e(\xb)) \less X(\logX)^{-A},    
    \]
mutatis mutandis. Recalling the remarks at the end of section \ref{sec:PrincArcsI}, the functions $\Psi_0(z)$ and $(\Psi_0)_{(j)}(z)$ are even, so by equation \eqref{M:eq:Psi0Asymp} we immediately conclude that
    \[
        (\Psi_0)_{(j)}(-\rho e(\xb)) = \frac{j!}{4}\pfr{X}{1-2\pi iX\xb}^{j+1} + O_j(X^{j+\fr12+\xe}),
    \]
and formula \eqref{M:eq:Momop} for $z = -\rho e(\xb)$ follows.

We now turn our attention to equation \eqref{M:eq:Deriv},  following the arguments used in \cite{vaughan2008number}*{Lem.\ 2.3}. Arguing by induction, we find that for all $j \geq 1$ there exist integers $s(j,k)$ for which
    \<
        \label{eq:DiffOp}
        z^j \Psid{j}(z) = \sum_{k=1}^j s(j,k) \PsiD{j}(z).
    \>
Moreover, because $s(j,j)=1$ for all $j$, the leading term in the sum \eqref{eq:DiffOp} is $\PsiD{j}(z)$. Using the assumption that $|\xb| \leq X^{-1}(\logX)^{-1/4}$, we observe that 
    \[
        \xr e(\xb) = (1 + O(1/X)) (1+O(\xb)) = 1 + O(1/X),
    \]
whereby $(\pm\xr e(\xb))^j = (\pm1)^j + O_j(X^{-1})$. Using this with equations \eqref{M:eq:Momop} and \eqref{eq:DiffOp}, 
equation \eqref{M:eq:Deriv} is established for $z = \pm\xr e(\xb)$, and the proof of Proposition \ref{M:prop:Princ} is complete.

\begin{remark}
    We note that the $s(j,k)$ in \eqref{eq:DiffOp} are the \emph{Stirling numbers of the second kind} from elementary combinatorics. One also has a kind of dual relation to \eqref{eq:DiffOp}, in that one has
        \<
            \label{eq:DiffOp2}
            \PsiD{j}(z) = \sum_{k=1}^j S(j,k) z^j \Psid{j}(z),
        \>
    where the $S(j,k)$ are the \emph{Stirling numbers of the first kind}. Equations \eqref{eq:DiffOp} and \eqref{eq:DiffOp2} may both be proved using induction and the recursive relations satisfied by the $S(j,k)$ and $s(j,k)$. See, e.g., \cite{graham1994concrete}*{Exer.\ 6.13}.
\end{remark}

\section{Proof of Theorem \ref{M:thm:Relations}}
\label{sec:Relations}
We now examine how $\rho$ and $\xr_*$ (and thereby $X$ and $X_*$) should depend on $x$, recalling that for sufficiently large $x$ we want $\xr$ and $\xr_*$ to be solutions to the equations
    \[
        \PsiD{1}(\rho) = \xr\Psi'(\xr) = x \qqand \PsiD{1}(-\rho) = (-\xr)\Psi'(-\xr) = x,
    \] 
i.e., the equations \eqref{eq:sdp} and \eqref{eq:sdp*}, respectively.

\begin{lemma}
    \label{lem:sdpt}
    Let $\Psi(z) = \Psi(z,\mu)$. For all $x > x_0$ there exist unique $\rho=\xr(x)$ and $\rho_*=\xr_*(x)$ in $(0,1)$ satisfying equations \eqref{eq:sdp} and \eqref{eq:sdp*}, respectively. Moreover, as $x\to\infty$ one has $\xr(x)\to1$ and $\xr_*(x)\to1$.
\end{lemma}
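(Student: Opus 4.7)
The plan is to reduce the lemma to the estimates provided by Proposition \ref{M:prop:Princ} at $\beta = 0$, together with elementary real-variable arguments. Setting $j = 1$ and $\beta = 0$ in \eqref{M:eq:Momop} yields
\[
\PsiD{1}(\pm\rho) = \tfrac{1}{4}X^2 + O\big(X^2 (\log X)^{-A}\big),
\]
so $\PsiD{1}(\pm\rho) \to +\infty$ as $\rho \to 1^-$ (equivalently $X \to \infty$); setting instead $j = 2$ gives $\PsiD{2}(\pm\rho) = \tfrac{1}{2}X^3 + O(X^3(\log X)^{-A})$, which is strictly positive once $X \geq X_1$ for some fixed threshold $X_1$. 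These two asymptotics are the only real input from the preceding sections.

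For existence, I would observe that $\rho \mapsto \PsiD{1}(\pm\rho, \mu)$ is real-analytic on $(0,1)$ and satisfies $\PsiD{1}(\pm\rho) \to 0$ as $\rho \to 0^+$, since the series $\PsiD{1}(z) = \sum_{k,n} \mu^k(n) n z^{nk}$ converges absolutely for $|z| < 1$ with vanishing constant term. Combined with the divergence at $\rho = 1^-$, the intermediate value theorem produces at least one $\rho \in (0,1)$ solving \eqref{eq:sdp}, and similarly at least one $\rho_* \in (0,1)$ solving \eqref{eq:sdp*}, for every sufficiently large $x$.

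For uniqueness, I would use the identity $\PsiD{2}(z) = (z\partial_z)\PsiD{1}(z)$ together with the chain rule to obtain $\tfrac{d}{d\rho}\PsiD{1}(\pm\rho) = \PsiD{2}(\pm\rho)/\rho$. The positivity of $\PsiD{2}(\pm\rho)$ for $X \geq X_1$ then implies that $\PsiD{1}(\pm\rho)$ is strictly increasing on the interval $I_1 := (e^{-1/X_1}, 1)$. Outside $I_1$ I would invoke the crude comparison
\[
|\PsiD{1}(\pm\rho, \mu)| \;\leq\; \sum_{k,n \geq 1} n\rho^{nk} \;=\; \sum_{n=1}^\infty \frac{n\rho^n}{1-\rho^n} \;\leq\; C_1,
\]
uniform in $\rho \in (0, e^{-1/X_1}]$, to conclude that once $x > C_1$ any solution of \eqref{eq:sdp} or \eqref{eq:sdp*} must lie in $I_1$, where the strict monotonicity of $\PsiD{1}(\pm\rho)$ forces it to be unique.

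The limits $\rho(x), \rho_*(x) \to 1$ will then follow at once: along any sequence $x_n \to \infty$ for which $\rho(x_n)$ or $\rho_*(x_n)$ stayed in some $[0,r]$ with $r < 1$, the corresponding values $\PsiD{1}(\pm\rho(x_n)) = x_n$ would remain bounded by the comparison above, contradicting $x_n \to \infty$. I do not anticipate any genuine obstacle in this lemma; the only mildly delicate point is assembling the global uniqueness statement (since the asymptotics of Proposition \ref{M:prop:Princ} give monotonicity only near $\rho = 1$), and this is handled by the elementary bound on $\PsiD{1}$ away from $\rho = 1$.
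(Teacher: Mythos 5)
Your argument is correct and is essentially the paper's own proof: existence via the intermediate value theorem using the divergence $\PsiD{1}(\pm\rho)\to\infty$ as $\rho\to1^-$ from Proposition \ref{M:prop:Princ}, uniqueness via positivity of $\PsiD{2}(\pm\rho)/\rho = \tfrac{d}{d\rho}\PsiD{1}(\pm\rho)$ near $\rho=1$ combined with a uniform bound on $\PsiD{1}$ away from $\rho=1$, and the limit $\rho(x),\rho_*(x)\to1$ from that same boundedness on compact subintervals. The paper phrases this with $g(\rho)=\rho\Psi'(\rho)$ and $g'=\Psi'+\rho\Psi''$, which is identical to your $\PsiD{1}$, $\PsiD{2}$ formulation.
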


\begin{proof}
    Setting $g(\rho) := \rho \Psi'(\rho)$, it is evident that $g(\xr)$ is continuous on $[0,1)$ and that $g(0) = 0$. Because Proposition \ref{M:prop:Princ} shows that $\lim_{\rho \to 1^-} g(\rho) = \infty$, a solution $\rho(x) \in (0,1)$ for \eqref{eq:sdp} is guaranteed by the intermediate value theorem. Moreover, since $g(\rho)$ is bounded on intervals $[0,\rho_0]\subset[0,1)$, it is necessary that $\rho(x) \to 1$ as $x \to \infty$. 
    
    We have $g'(\rho) = \Psi'(\rho)+\rho\Psi''(\rho)$, and equation \eqref{M:eq:Deriv} implies that $\lim_{\rho \to 1^-} g'(\rho) = \infty$, so there necessarily exists $\rho_0 \in (0,1)$ such that $g'(\rho) > 0$ for $\rho > \rho_0$. It follows that if $x > x_0 := \sup_{[0,\rho_0]} g(\rho)$, then $\rho(x) > \rho_0$, and the solution $\rho(x)$ to equation \eqref{eq:sdp} must be unique since $g(\rho)$ is monotonic on $(\rho_0,1)$. 

    The arguments concerning equation \eqref{eq:sdp*} and $\rho_*$ are identical.
\end{proof}

\begin{remark}
    Because $\rho = e^{-1/X}$ and $\xr(x)\to1$ as $x\to\infty$, the statements ``as $x \to \infty$'', ``as $X \to \infty$'', and ``as $\xr\to1$'' are equivalent, whereby asymptotic statements such as $X \sim 2\sqrt{x}$ may be made precise. For simplicity we omit further references to this technicality, and the dependencies of $\rho$ and $X$ on $x$ are understood in future discussions. The same is done for $\xr_*$, $X_*$, and $x$.
\end{remark}

Fix $A>0$. By Proposition \ref{M:prop:Princ}, for $X > X_0(A)$ one has
    \<
        \label{eq:xX}
        x = \tf14 X^2 \lf[1+O((\logX)^{-A})\rh],
    \>
whereby $\log x = 2 \logX + \O{1}$, and in particular $\log x \asymp \logX$. This implies that the term $O((\logX)^{-A})$ in equation \eqref{eq:xX} may be replaced with $O((\logx)^{-A})$, so that
    \<
        \label{eq:Xinx}
        X = 2\rt{x} \lh[ 1+O((\logx)^{-A}) \rh]^{-\fr12} = 2\rt{x}\lf[ 1+O((\logx)^{-A})\rh] 
    \>
and
    \<
        \label{eq:xLogrho}
        - x \log\rho = \frac{x}{X} = \tf12 \rt{x} \lf[ 1+O((\logx)^{-A}) \rh]^{-1} = \tf12 \rt{x}\lf[ 1+O((\logx)^{-A}) \rh].
    \>
Recalling from Proposition \ref{M:prop:Princ} that $\PsiD{j}(\rho) = \tf14 j!\: X^{j+1} + O(X^{j+1}(\logX)^{-A})$, for $j \geq 1$ we use equation \eqref{eq:Xinx} to deduce that
    \<
        \label{M:eq:Momopx}
        \begin{aligned}
        \PsiD{j}(\rho) 
            &= 2^{j-1} j!\: x^{\fr{j+1}2} \lf[ 1+O((\logx)^{-A}) \rh]^{j-1} = 2^{j-1} j!\: x^{\fr{j+1}2} \lf[ 1+O_j((\logx)^{-A}) \rh].
        \end{aligned}
    \>
Just as equation \eqref{M:eq:Deriv} is proved using equation \eqref{eq:DiffOp}, it follows from \eqref{M:eq:Momopx} that
    \<
        \label{M:eq:Derivx}
        (\pm1)^j\Psid{j}(\rho) = 2^{j-1} j!\: x^{\fr{j+1}{2}} \lf[ 1+O_j((\logX)^{-A}) \rh],
    \>
and we have established the relations of Theorem \ref{M:thm:Relations} for $\rho$ satisfying $\xr\Psi'(\rho)=x$.

Now considering the case of $\xr_*$ and $X_*$, again by Proposition \ref{M:prop:Princ} we have 
    \<
        \label{eq:xX*}
        x = \tf14 X_*^2 \lf[ 1+O((\log{X_*})^{-A}) \rh].
    \> 
Considering the identical forms of equations \eqref{eq:xX} and \eqref{eq:xX*}, the analogues of equations \eqref{eq:Xinx}--\eqref{M:eq:Derivx} for $-x \log(\rho_*)$, $\PsiD{j}(-\rho_*)$, etc.\! follow mutatis mutandis, completing the proof of Theorem \ref{M:thm:Relations}.

\section{Arc transference and \tops{$*$}{*}-arcs}
\label{sec:transference}

Having considered $\Psi(\xr e(\xa))$ for different $\xa \in [0,1)$ and for $\xr$ and $\xr_*$ as functions of $x$, we now return to the integral formula for $p(x,\mu)$, namely
    \<
        \label{eq:ints}
        \dint_{|z|=r} \Phi(z)z^{-x-1} \,dz = r^{-x} \int_0^1 \exp\Psi(r e(\xa)) e(-x\xa) \,d\xa \qquad (0<r<1), 
    \>
and consider the different contributions made to \eqref{eq:ints} as $\xa$ runs over $[0,1)$. As mentioned in section \ref{sec:prelim}, ultimately only those $\xa$ in small neighborhoods of $0$, $1$, and $1/2$ significantly contribute to this integral.

In what follows let $x>0$ be sufficiently large. 
In analyzing integral \eqref{eq:ints} when $\xa$ is near $0$ and $1$, control of our error terms requires that we take the radius $r$ of integration to be the solution $\xr$ of \eqref{eq:sdp}. On the other hand, controlling our error terms in \eqref{eq:ints} for $\xa$ near $1/2$ similarly requires that $r$ be the solution $\xr_*$ of \eqref{eq:sdp*}. As we expect for almost all $x$ that $\rho(x)$ and $\rho_*(x)$ are not equal, we must make a ``compromise'' by modifying the original circle $|z|=r$ of integration in \eqref{eq:ints}.

As $\Phi(z)$ is analytic for $|z|<1$, we split the interval $[0,1)$ into the sets 
    \<
        \label{eq:fAfA*}
        \fA := [0,\tf14)\cup[\tf34,1) \qqand \fA_* := [\tf14,\tf34)
    \>
and change the path of integration on the left side of \eqref{eq:ints} to be the positively oriented path consisting of the two semicircles 
    \[
        \rho e(\fA) := \{ \rho e(\xa) : \xa \in \fA \} \qqand \rho_* e(\fA_*) := \{ \rho_* e(\xa) : \xa \in \fA_* \}
    \]
and the two connecting line segments $[i\rho,i\rho_*]$ and $[-i\rho_*,-i\rho]$, as shown in Figure \ref{fig:Contour}.

{
\begin{figure}[!ht]
\centering
{\scalebox{0.95}{
\begin{tikzpicture}
    \def\gap{0.2}
    \def\tzR{2.35} 
    \def\tzr{1.80} 
    \def\tzudr{3} 
    
    \draw[line width = 0.5pt] (0,0) circle (\tzudr);
    \draw[dashed, line width = 0.5pt] (0,0) circle (\tzR);
    \draw[dashed, line width = 0.5pt] (0,0) circle (\tzr);
    
    \draw [help lines,-] (-1.2*\tzudr, 0)--(1.2*\tzudr, 0);
    \draw [help lines,-] (0, -1.2*\tzudr)--(0, 1.2*\tzudr);
    
    \draw[line width=1pt,
        decoration = {markings, mark = at position 0.6 with {\arrow[line width=1.2pt]{>}}},
        postaction = {decorate}] 
        (90:\tzR) arc (90:270:\tzR);
    
    \draw[line width=1pt, 
        decoration = {markings, mark = at position 0.6 with {\arrow[line width=1.2pt]{>}}}, postaction = {decorate}] 
        (-90:\tzR) -- (-90:\tzr);
    
    \draw[line width=1pt, 
        decoration = {markings, mark = at position 0.6 with {\arrow[line width=1.2pt]{>}}}, postaction = {decorate}] 
        (-90:\tzr) arc (-90:90:\tzr);
    
    \draw[line width=1pt, 
        decoration = {markings, mark = at position 0.6 with {\arrow[line width=1.2pt]{>}}}, postaction = {decorate}] 
        (90:\tzr) -- (90:\tzR);
    
    \draw[very thin, dashed] (0,0)--(125:\tzR);
    \node[below] at (130:0.6*\tzR) {$\rho_*$};
    
    \draw[very thin, dashed] (0,0)--(345:\tzr);
    \node[below] at (345:0.6*\tzr) {$\rho$};
    
    \node[below right] at (\tzudr,0) {$1$};
    \node[below left] at (-\tzudr,0) {$-1$};
    \node[above right] at (0, \tzudr) {$i$};
    \node[below right] at (0, -\tzudr) {$-i$};
\end{tikzpicture}
}}
\caption{The new contour of integration. We note that the figure draws $\rho_* > \rho$ for illustrative purposes only---it is possible that $\rho_* \leq \rho$.}
\label{fig:Contour}
\end{figure}
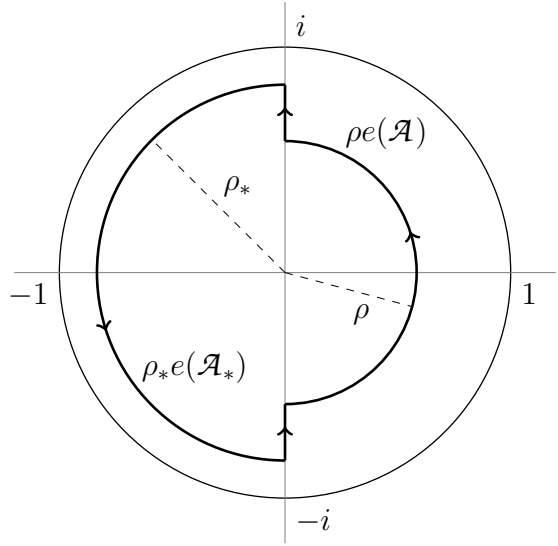
}

Using this new contour of integration in \eqref{eq:ints} we have
    \<
        \label{eq:pxmuNewInt}
        p(x,\mu) = \dint_{\rho e(\fA)} \Phi(z) z^{-x-1}\,dz + \dint_{\rho_* e(\fA_*)} \Phi(z) z^{-x-1} \, dz + \fT(\rho,\rho_*),
    \>
where
    \<
        \fT(\xr,\xr_*) := \dint_{i\xr}^{i\xr_*} \Phi(z)z^{-n-1} \,dz + \dint_{-i\xr_*}^{-i\xr} \Phi(z)z^{-n-1} \,dz,
    \>
and we note that
    \[
        \fT(\rho,\rho_*) = 2 \Re \lf[ \dint_{i\rho}^{i\rho_*} \Phi(z)z^{-n-1} \,dz \rh] = 2\Re\lf[ \frac{i^{-n-1}}{2\pi} \int_{\rho}^{\rho_*} \Phi(it)t^{-n-1} \,dt \rh].
    \]

\begin{remark}
    Before proceeding further we must address a technical issue introduced by this ``arc transference'' in equation \eqref{eq:pxmuNewInt}. Because $\xr(x)$ and $\xr_*(x)$ are unlikely to be equal, so then are $X$ and $X_*$ unlikely to be equal. Thus, the division of $[0,1)$ into major, minor, and principal arcs using $X$ and $Q$ (which is a power of $X$) is now problematic. In particular, we must define ``$*$-arcs'' using $X_*$ and $Q_*$ via
        \[
            \fM_*(a/q) := \{ \xa \in [0,1) : |\xa-a/q| \leq Q_*/(qX_*) \},    
        \]
    and consider $*$-versions of the major and minor arcs, say $\fM(X_*,Q_*)$ and $\fm(X_*,Q_*)$, noting that we have already defined $\fP_*$.  

    Fortunately, this technicality does not create much trouble. In particular, recalling the note at the end of section \ref{sec:prelim}, the proofs of Propositions \ref{M:prop:Minor}, \ref{M:prop:Major}, and \ref{M:prop:Princ} do not require that $\xr$ be a solution to one of the equations \eqref{eq:sdp} and \eqref{eq:sdp*}. Indeed, said propositions hold by identical proofs if one replaces $\xr$, $X$, and $Q$ with $\xr_*$, $X_*$, and $Q_*$, respectively. That this replacement is valid is perhaps made intuitively clear by recalling that, per Theorem \ref{M:thm:Relations}, both $X \sim 2\sqrt{x}$ and $X_* \sim 2\sqrt{x}$.
\end{remark}

We now establish that this ``arc transference'' made in equation \eqref{eq:pxmuNewInt} only adds a small error term to our formula for $p(x,\mu)$; that is, small  relative to the main terms of \eqref{eq:pxmuNewInt}, which are of order $\exp(\sqrt{x})$. For convenience we recall from Theorem \ref{M:thm:Relations} and the previous section that
    \begin{alignat}{2}
        \label{M:eq:XinxRecall}
        -x \log \rho &= \tf12 \rt{x}\brk{1+O((\logx)^{-A})},
            \qquad X &&= 2\rt{x}\brk{1+O((\logx)^{-A})}, \\
        \label{M:eq:X*inxRecall}
        -x \log \rho_* &= \tf12 \rt{x}\brk{1+O((\logx)^{-A})},
            \qquad X_* &&= 2\rt{x}\brk{1+O((\logx)^{-A})}.
    \end{alignat}

\begin{lemma}
    \label{lem:Transfer}
    For $x > x_0$ let $\rho$ and $\rho_*$ be the unique solutions to equations \eqref{eq:sdp} and \eqref{eq:sdp*}, respectively. For all $\xe > 0$, as $x\to\infty$ one has
        \[
            \fT(\rho,\rho_*) \less \exp((\tf12+\xe)\sqrt{x}).
        \]
\end{lemma}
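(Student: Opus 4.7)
The plan is to bound $\fT(\xr,\xr_*)$ trivially by $|\xr_*-\xr|$ times the supremum of the integrand over $t$ between $\xr$ and $\xr_*$. The key observation is that the integrand involves $\Phi(it)$, i.e., $\xa = 1/4$, which is exactly the ``harmless'' rational where $\Vmaj(4)=0$ in Proposition \ref{M:prop:Major}. This kills the would-be main term and leaves only a small error, so the integrand grows like $\exp((\tf12+o(1))\sqrt{x})$, which is the content of the lemma.

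In more detail, assume without loss of generality that $\xr \leq \xr_*$. Then from the explicit expression for $\fT(\xr,\xr_*)$ one has
\[
    |\fT(\xr,\xr_*)| \leq \frac{1}{\pi}\,|\xr_*-\xr| \sup_{\xr\leq t\leq \xr_*} |\Phi(it)|\, t^{-x-1}.
\]
For each such $t$ write $t = e^{-1/Y}$ with $Y$ between $X$ and $X_*$. By Theorem \ref{M:thm:Relations} (specifically the recalled relations \eqref{M:eq:XinxRecall} and \eqref{M:eq:X*inxRecall}) both $X$ and $X_*$ equal $2\sqrt{x}[1+O((\logx)^{-A})]$, hence so does $Y$. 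In particular, $|\xr_*-\xr| \leq 1$ is a crude but sufficient bound on the length factor.

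Next I apply Proposition \ref{M:prop:Major} to $\Psi(te(\tf14))=\Psi(it)$ with $a/q = 1/4$ and $\xb=0$. For $x$ large enough we have $q=4 \leq Y^{2/5}$, so the proposition applies and gives
\[
    \Psi(it) = \Vmaj(4)\cdot Y + O\!\left(Y(\log Y)^{-A}\right) = O\!\left(\sqrt{x}\,(\logx)^{-A}\right),
\]
using $\Vmaj(4)=0$. Thus $|\Phi(it)| = \exp(\Re\Psi(it)) = \exp\!\left(O(\sqrt{x}(\logx)^{-A})\right)$. Meanwhile
\[
    t^{-x} = \exp(x/Y) = \exp\!\left(\tfrac{1}{2}\sqrt{x}\,[1+O((\logx)^{-A})]\right).
\]
Multiplying gives $|\Phi(it)|\,t^{-x-1} \leq \exp\!\left(\tfrac{1}{2}\sqrt{x}\,[1+O((\logx)^{-A})]\right)$, so for any fixed $\xe>0$ and all sufficiently large $x$ this is at most $\exp((\tf12+\xe)\sqrt{x})$, and the lemma follows.

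The argument is essentially routine once one identifies the right tool; the only mild subtlety is justifying that Proposition \ref{M:prop:Major} applies uniformly for all $t\in[\xr,\xr_*]$ rather than only at $t=\xr$. This is immediate from inspection of the proposition's statement and proof, since it is fundamentally a result about $\Psi(z)$ for $|z|$ close to $1$ parametrized by the quantity $X = 1/\log(1/|z|)$, and $Y$ plays exactly this role for $z=it$. No deep estimate is needed beyond those already established.
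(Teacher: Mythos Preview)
Your proof is correct and follows essentially the same approach as the paper: both hinge on the fact that $\Vmaj(4)=0$ in Proposition~\ref{M:prop:Major}, which forces $\Re\Psi(it)=O(\sqrt{x}(\logx)^{-A})$ for $t$ between $\rho$ and $\rho_*$, and then combine this with $t^{-x}=\exp((\tf12+o(1))\sqrt{x})$. The only cosmetic difference is that the paper integrates $t^{-x-1}$ over $[\rho,\rho_*]$ directly (obtaining $(\rho_*^{-x}-\rho^{-x})/x$) rather than using your cruder length bound $|\rho_*-\rho|\leq 1$, but this distinction is immaterial.
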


\begin{proof}
    Fix $A,\xe>0$ and let $x>x_0(A)$. It is evident that $\fT(\rho,\rho_*) = -\fT(\rho_*,\rho)$, so there is no loss of generality in assuming that $\rho\leq\rho_*$. First, using \eqref{M:eq:XinxRecall} and \eqref{M:eq:X*inxRecall} we find that
        \begin{align}
            \label{eq:TIntBB}
            \begin{aligned}
            & \int_{\rho}^{\rho_*} \frac{dt}{t^{x+1}} 
            = \frac{\rho^{-x}-\rho_*^{-x}}{x}
            = \fr{1}{x}\lf[ e^{\fr12\rt{x} + o(\rt{x})} - e^{\fr12\rt{x} + o(\rt{x})} \rh]
            \less e^{(\fr12+\xe)\rt{x}}.
            \end{aligned}
        \end{align}
    
    For $\rho \leq t \leq \rho_*$ let $t = \exp(-1/X_t)$, and assume that $x$ (and therefore $\rho$) is large enough that $X_t (\logX_t)^{-A}$ is increasing in $t$ for $t \geq \rho$. Because $\Vmaj(4) = 0$ in Proposition \ref{M:prop:Major}, for sufficiently large $x$ it holds that
        \<
            \label{eq:PsiitBB}
            \Re \Psi(it) \less X_t (\logX_t)^{-A} \less X_* (\logX_*)^{-A} \less \sqrt{x} (\logx)^{-A} \leq \xe\sqrt{x} \qquad 
        \>
    for $\rho \leq t \leq \rho_*$. Using inequalities \eqref{eq:TIntBB} and \eqref{eq:PsiitBB} we have
        \[
            \lf| \dint_{i\rho}^{i\rho_*} \Phi(z)z^{-x-1} \,dz \rh| 
            \less e^{\xe\rt{x}} \int_{\rho}^{\rho_*}\frac{dt}{t^{x+1}} \less e^{(\fr12 + 2\xe)\rt{x}},
        \]
    and the result follows by repeating our arguments with $\xe/2$ in place of $\xe$.
\end{proof}

\begin{corollary}
    \label{M:cor:transfer}
    For $x > x_0$ let $\rho$ and $\rho_*$ be the unique solutions to equations \eqref{eq:sdp} and \eqref{eq:sdp*}, respectively. For all $\xe > 0$, as $x\to\infty$ one has
        \[
            \begin{aligned}
                p(x,\mu) 
                &= \rho^{-x} \int_{\fA} \Phi(\rho e(\xa)) e(-x\xa) \, d\xa + \rho_*^{-x} \int_{\fA_*} \Phi(\rho_* e(\xa)) e(-x\xa)\,d\xa + O\big( e^{(\fr12+\xe)\sqrt{x}} \big).
            \end{aligned}
        \]
\end{corollary}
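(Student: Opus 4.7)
The proposal is to deduce the corollary directly from equation \eqref{eq:pxmuNewInt} together with Lemma \ref{lem:Transfer}, so almost no new work is required beyond a parametrization of the two semicircular contours and an application of the established bound on $\fT(\rho,\rho_*)$.

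More concretely, I would begin by recalling \eqref{eq:pxmuNewInt}, which already decomposes
\[
p(x,\mu) = \dint_{\rho e(\fA)} \Phi(z) z^{-x-1}\,dz + \dint_{\rho_* e(\fA_*)} \Phi(z) z^{-x-1}\,dz + \fT(\rho,\rho_*).
\]
This decomposition was obtained in Section \ref{sec:transference} by deforming the original circular contour of integration in \eqref{eq:pnfInt} into the piecewise path shown in Figure \ref{fig:Contour}, which is legitimate by the analyticity of $\Phi(z)$ on $\{|z|<1\}$.

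Next, I would parametrize the two semicircular pieces. On $\rho e(\fA)$ write $z = \rho e(\xa)$ with $\xa\in\fA$; then $dz = 2\pi i \, z\, d\xa$ and $z^{-x-1}\,dz = 2\pi i\, \rho^{-x} e(-x\xa)\, d\xa$, giving
\[
\dint_{\rho e(\fA)} \Phi(z) z^{-x-1}\,dz = \rho^{-x} \int_{\fA} \Phi(\rho e(\xa)) e(-x\xa)\,d\xa.
\]
An identical computation with $\rho_*$ in place of $\rho$ and $\fA_*$ in place of $\fA$ handles the second semicircular integral. Finally, I would invoke Lemma \ref{lem:Transfer} to conclude that $\fT(\rho,\rho_*) \less \exp((\tfrac12+\xe)\sqrt{x})$. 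Combining these three observations yields the stated formula.

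Because each step is either a direct quotation from \eqref{eq:pxmuNewInt}, a routine change of variable, or an invocation of an already-proved lemma, there is no genuine obstacle in this argument; the corollary is really just a repackaging of the contour deformation of Section \ref{sec:transference} into the notation that will be used in Section \ref{sec:nonPrc} and Section \ref{sec:MAsymp}. The only mild care needed is to check that the orientation convention used to parametrize each semicircle is consistent with the positive orientation of the deformed contour in Figure \ref{fig:Contour}, so that no stray sign appears in front of either integral.
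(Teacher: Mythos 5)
Your proposal is correct and is exactly the argument the paper intends: the corollary is stated without separate proof precisely because it follows from \eqref{eq:pxmuNewInt} by the routine parametrization $z=\rho e(\xa)$ (so that $\dint \Phi(z)z^{-x-1}\,dz = \rho^{-x}\int\Phi(\rho e(\xa))e(-x\xa)\,d\xa$, the factor $2\pi i$ cancelling against the dashed-integral normalization) together with the bound on $\fT(\rho,\rho_*)$ from Lemma \ref{lem:Transfer}. Your orientation check is the right thing to verify and poses no issue, since both semicircles are traversed counterclockwise along the positively oriented contour of Figure \ref{fig:Contour}.
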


\section{The contributions of nonprincipal arcs}
\label{sec:nonPrc}

We are now ready to bound the contributions of all nonprincipal arcs to the integrals 
    \[
        \rho^{-x} \int_{\fA} \Phi(\rho e(\xa))e(-x\xa) \,d\xa \qqand \rho_*^{-x} \int_{\fA_*} \Phi(\rho_* e(\xa))e(-x\xa) \,d\xa,
    \]
recalling that 
    \[
        \fA := [0,\tf14)\cup[\tf34,1) \qqand \fA_* := [\tf14,\tf34)  
    \]
and that
    \begin{align*}
        \fP &= \{ \xa\in[0,1) : \|\xa\| \leq X^{-1}(\logX)^{-\nfr14} \}, \\
        \fP_* &= \{ \xa\in[0,1) : |\xa-\tf12| \leq X_*^{-1}(\logX_*)^{-\nfr14} \}.
    \end{align*}

Our primary tools toward this end are the following inequalities on $\Re\Psi(z)$.

\begin{lemma}
    \label{M:lem:NonPrincIneq}
    Let $X > X_0$. For all $\xa \in \fA\setminus\fP$ one has
        \<
            \label{M:eq:PsiReBB}
            \Re \Psi(\rho e(\xa)) \leq \lf( 1-\fr{1}{\logX} \rh) \Psi(\rho),
        \>
    and for all $\xa \in \fA_*\setminus\fP_*$ one has
        \<
            \label{M:eq:PsiReBB*}
            \Re \Psi(\rho_* e(\xa)) \leq \lf( 1-\fr{1}{\logX} \rh) \Psi(-\rho_*).
        \>
\end{lemma}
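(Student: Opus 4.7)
The plan is to decompose $\fA\setminus\fP$ and $\fA_*\setminus\fP_*$ by arc type---minor arc, major arc with $|\Vmaj(q)|\leq 1/5$, or ``critical'' major arc with $|\Vmaj(q)|=1/4$---and apply the corresponding estimate from Sections \ref{sec:MinorArcs}--\ref{sec:PrincArcsII}. Since $\Vmaj(1)=\Vmaj(2)=1/4$, $\Vmaj(4)=0$, and $|\Vmaj(q)|\leq 1/5$ otherwise, and since $\fM(0/1)\cup\fM(1/1)\subset\fA$ and $\fM(1/2)\subset\fA_*$ for large $X$, the only critical arcs inside $\fA$ (resp.\ $\fA_*$) are $\fM(0/1),\fM(1/1)$ (resp.\ $\fM(1/2)$). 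I rely on the remark preceding Lemma \ref{lem:Transfer}---that Propositions \ref{M:prop:Minor}, \ref{M:prop:Major}, and \ref{M:prop:Princ} remain valid with $(\xr,X,Q)$ replaced by $(\xr_*,X_*,Q_*)$---and on Theorem \ref{M:thm:Relations}, which gives $\Psi(\pm\xr)=\tfrac{X}{4}[1+O((\logX)^{-A})]$ with $X\asymp X_*\asymp\sqrt{x}$.

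For $\xa\in\fA\setminus\fP$ in the minor arcs $\fm(X,Q)$, Proposition \ref{M:prop:Minor} at once yields $|\Psi(\xr e(\xa))|\ll X(\logX)^{-A}$, which is far smaller than $(1-1/\logX)\Psi(\xr)\asymp X$. For $\xa\in\fM(a/q)\cap\fA$ with $|\Vmaj(q)|\leq 1/5$, Proposition \ref{M:prop:Major} and the bound $\Re[X/(1-2\pi iX\xb)] = X/(1+(2\pi X\xb)^2)\leq X$ give
\[
    \Re\Psi(\xr e(\xa)) \leq |\Vmaj(q)|\,X + O(X(\logX)^{-A}) \leq \tfrac{X}{5} + O(X(\logX)^{-A}),
\]
which is well below $(1-1/\logX)\Psi(\xr) = \tfrac{X}{4}(1+o(1))$ for large $X$.

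The main obstacle is the critical case $\xa\in(\fM(0/1)\cup\fM(1/1))\setminus\fP$, where $\Vmaj(1)=1/4$ matches the order of $\Psi(\xr)$, so the savings must come from the denominator rather than the coefficient. Writing $\xa = a+\xb$ with $a\in\{0,1\}$ and $|\xb|>\xh = X^{-1}(\logX)^{-1/4}$,
\[
    \Re\!\brk{\frac{X}{1-2\pi iX\xb}} = \frac{X}{1+(2\pi X\xb)^2} \leq \frac{X}{1+4\pi^2(\logX)^{-1/2}} \leq X\bigl(1-c(\logX)^{-1/2}\bigr)
\]
for an absolute $c>0$ and large $X$. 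Combining with Proposition \ref{M:prop:Major} and $\Psi(\xr)=\tfrac{X}{4}+O(X(\logX)^{-A})$ gives
\[
    \Re\Psi(\xr e(\xa)) \leq \tfrac{X}{4}\bigl(1-c(\logX)^{-1/2}\bigr) + O(X(\logX)^{-A}) \leq \Psi(\xr)\bigl(1-1/\logX\bigr)
\]
for $X>X_0$, on choosing $A$ large enough that $c(\logX)^{-1/2}$ dominates both $(\logX)^{-1}$ and the $O$-term.

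The second inequality is proved by the same three-part analysis; the only critical arc in $\fA_*$ is $\fM(1/2)$, and for $\xa=\tfrac12+\xb\in\fM(1/2)\setminus\fP_*$ one uses $\xr_* e(\tfrac12+\xb)=-\xr_* e(\xb)$ together with the Proposition \ref{M:prop:Princ} expansion $\Psi(-\xr_* e(\xb)) = (X_*/4)/(1-2\pi iX_*\xb)+O(X_*(\log X_*)^{-A})$. The denominator argument above then goes through verbatim with $\xh,X,\Psi(\xr)$ replaced by $\xh_*,X_*,\Psi(-\xr_*)$.
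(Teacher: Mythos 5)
Your proof is correct and follows essentially the same route as the paper's: decompose $\fA\setminus\fP$ (resp.\ $\fA_*\setminus\fP_*$) into minor arcs, noncritical major arcs, and the critical arcs $\fM(0/1),\fM(1/1)$ (resp.\ $\fM(1/2)$), and on the critical arcs extract the saving $1-c(\logX)^{-1/2}$ from the denominator $1-2\pi iX\xb$ using $|\xb|>\xh$, which is precisely the paper's step $\Delta=(1+4\pi^2X^2\xb^2)^{-1/2}\leq 1-(\logX)^{-1/2}$. One minor citation slip: on $\fM(1/2)\setminus\fP_*$ the expansion you quote should be attributed to Proposition \ref{M:prop:Major} with $q=2$ (in its $*$-version), not Proposition \ref{M:prop:Princ}, whose hypothesis $|\xb|\leq\xh_*$ excludes exactly the range $|\xb|>\xh_*$ you are treating; since $\Vmaj(2)=\tf14$ the formula is identical, so nothing in the argument breaks.
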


\begin{remark}
    In the following proof we require that $A>1$ in contrast to our usual allowance that $A>0$. This restriction does not affect any of our previous results since said results allowed $A>0$ to be arbitrary.
\end{remark}

\begin{proof}
    Fix $A>1$, let $X>X_0(A)$, and let $Q=X^{2/5}$. Suppose that $\xa \in \fA$ can be written as $\xa = a/q+\xb$ with $0 \leq a \leq q \leq Q$, $(a,q)=1$, and $|\xb|\leq Q/(qX)$. For real $\xb$ we define
        \[
            \Delta = (1+4\pi^2X^2\xb^2)^{-\fr12} \qqand \vphi = \arctan(2\pi X\xb), 
        \]
    using the branch of $\arctan$ with values in $(-\fr{\pi}{2},\fr{\pi}{2})$, so that $(1-2\pi iX\xb)^{-1} = \Delta e^{i\vphi}$ and Proposition \ref{M:prop:Major} states that
        \<
            \label{eq:PsiMajorRedux}
            \Psi(\xr e(\xa)) = V(q)X\xD e^{i\vphi} + O(X(\logX)^{-A}).
        \>
    As $\Vmaj(1)=\tf14$, we have
        \<
            \label{M:eq:Psirho}
            \Psi(\xr) = \tf14 X + O(X(\logX)^{-A}).
        \>
    
    Since $\Delta \leq 1$ and $|\Vmaj(q)| \leq \tf{1}{16}$ when $q>2$, it follows at once from \eqref{eq:PsiMajorRedux} and \eqref{M:eq:Psirho} that inequality \eqref{M:eq:PsiReBB} holds for all $\xa$ in sets $\fA \cap \fM(a/q)$ with $3 \leq q \leq Q$. Recalling from Proposition \ref{M:prop:Minor} that for $\xa \in \fA \cap \fm(X,Q)$ one has $\Psi(\rho e(\xa)) \less X (\logX)^{-A}$ already, then \emph{a fortiori} inequality \eqref{M:eq:PsiReBB} holds for these $\xa$ as well. 

    It thus remains to consider only those $\xa$ in the major arcs $\fM(\fr01)$ and $\fM(\fr11)$ (since $\fM(\fr12)$ is not included in $\fA$) that are \emph{not} in the principal arc $\fP$. Considering the case of $\fM(\fr01)$, let $\xa = \xb$ with $X^{-1}(\logX)^{-1/4} < \xb \leq Q/X$. Here we find that
        \[
            \Delta \leq \big( 1+4\pi^2(\logX)^{-\fr12} \big)^{-\fr12} = 1-2\pi^2(\logX)^{-\fr12} + O((\logX)^{-1}) \leq 1-(\logX)^{-\fr12}
        \]
    for sufficiently large $X$, whereby equation \eqref{eq:PsiMajorRedux} shows that
        \<
            \label{eq:PsiReNonP}
            \Re \Psi(\xr e(\xb)) \leq \tf14(1-(\logX)^{-\fr12})X + O(X(\logX)^{-A}).
        \>
    Since the main term in $(1-(\logX)^{-1})\Psi(\xr)$ is $\tf14(1-(\logX)^{-1})X$ and we have $A>1$, we may ``spend'' the extra saved $(\logX)^{-1/2}$ in \eqref{eq:PsiReNonP} to overcome any differences between the error terms in \eqref{M:eq:Psirho} and \eqref{eq:PsiReNonP}, and thereby conclude that inequality \eqref{M:eq:PsiReBB} holds for all $\xb \in \fM(\tf01) \setminus \fP$. 
    
    A similar argument shows that inequality \eqref{M:eq:PsiReBB} holds for $\xa \in \fM(\tf11) \setminus \fP$, so the lemma's first assertion is established. Since $V(2)$ is also equal to $\tf14$, Proposition \ref{M:prop:Major} similarly implies that 
        \[
            \Psi(-\rho_*) = \tf14 X_* + O(X_*(\logX_*)^{-A}).
        \]
    Using this in place of \eqref{M:eq:Psirho}, identical arguments to those above show that inequality \eqref{M:eq:PsiReBB*} holds for $\xa \in \fA_* \setminus \fP_*$, and the proof of the lemma is completed.
\end{proof}

Using the results of Lemma \ref{M:lem:NonPrincIneq} we now take our first major step toward establishing the asymptotic formula \eqref{M:eq:pxmuForm} for $p(x,\mu)$.

\begin{proposition}
    \label{M:prop:pxmuReduc}
    For all $x > x_0$ let $\xr$ and $\xr_*$ be the unique solutions to equations \eqref{eq:sdp} and \eqref{eq:sdp*}, respectively. For fixed $B>0$, as $x\to\infty$ one has
        \<
            \label{eq:pxmuIntP}
            \begin{aligned}
                p(x,\mu) &= \rho^{-x} \int_{\fP} \Phi(\rho e(\xa)) e(-x\xa) \, d\xa + \rho_*^{-x} \int_{\fP_*} \Phi(\rho_*e(\xa)) e(-x\xa) \,d\xa \\
                & \qquad + O(\rho^{-x}\Phi(\rho)x^{-B}) + O(\rho_*^{-x}\Phi(-\rho_*)x^{-B}) + O(e^{(\fr12+\xe)\sqrt{x}}).
            \end{aligned}
        \>
\end{proposition}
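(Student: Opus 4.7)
The plan is to begin with Corollary~\ref{M:cor:transfer}, which already expresses $p(x,\xm)$ as the sum of the two integrals over $\fA$ and $\fA_*$ (taken along the circles of radii $\rho$ and $\rho_*$) plus the arc-transference error $O(e^{(\fr12+\xe)\sqrt{x}})$. For each of those two integrals I would split the domain of integration into the principal arc ($\fP$ or $\fP_*$) and its complement inside $\fA$ or $\fA_*$, reducing the proposition to the task of absorbing the two complementary integrals into the claimed error terms.

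The workhorse for these bounds is Lemma~\ref{M:lem:NonPrincIneq}. Writing $|\Phi(\rho e(\xa))| = \exp(\Re\Psi(\rho e(\xa)))$, that lemma gives for $\xa \in \fA \setminus \fP$
\[
    |\Phi(\rho e(\xa))| \leq \exp\!\lf((1-(\logX)^{-1})\Psi(\rho)\rh) = \Phi(\rho) \exp(-\Psi(\rho)/\logX).
\]
Since $\fA \setminus \fP$ has Lebesgue measure at most $\tf12$, trivial integration yields
\[
    \rho^{-x}\int_{\fA \setminus \fP} |\Phi(\rho e(\xa))|\,d\xa \less \rho^{-x}\Phi(\rho)\exp(-\Psi(\rho)/\logX),
\]
and the entirely analogous argument on $\fA_* \setminus \fP_*$ produces $\rho_*^{-x}\Phi(-\rho_*)\exp(-\Psi(-\rho_*)/\logX)$.

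To convert these bounds into the required $O(\rho^{-x}\Phi(\rho) x^{-B})$ and $O(\rho_*^{-x}\Phi(-\rho_*) x^{-B})$, I would appeal to Theorem~\ref{M:thm:Relations}, which supplies $\Psi(\rho) \sim \tf12\sqrt{x}$ and $\Psi(-\rho_*) \sim \tf12\sqrt{x}$, together with~\eqref{eq:Xinx} giving $\logX \sim \tf12\logx$. Consequently each of $\exp(-\Psi(\rho)/\logX)$ and $\exp(-\Psi(-\rho_*)/\logX)$ is dominated by $\exp(-c\sqrt{x}/\logx)$ for some $c>0$, and this beats every negative power of $x$. Combining these two contributions with the transference error from Corollary~\ref{M:cor:transfer} then yields formula~\eqref{eq:pxmuIntP}.

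No real obstacle is anticipated here. The inclusions $\fP \subset \fA$ and $\fP_* \subset \fA_*$ hold as soon as $\xh$ and $\xh_*$ are small enough, i.e., for all $x$ large, and the rest of the argument amounts to a one-line application of Lemma~\ref{M:lem:NonPrincIneq} combined with the asymptotic $\Psi(\rho),\Psi(-\rho_*) \asymp \sqrt{x}$. The only minor bookkeeping is the observation that the subexponential saving $\exp(-c\sqrt{x}/\logx)$ dominates $x^{-B}$ for every fixed $B>0$, which is immediate from $\sqrt{x}/\logx$ tending to $+\infty$ faster than any multiple of $\logx$.
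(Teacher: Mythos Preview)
Your proposal is correct and follows essentially the same route as the paper: start from Corollary~\ref{M:cor:transfer}, split off $\fA\setminus\fP$ and $\fA_*\setminus\fP_*$, apply Lemma~\ref{M:lem:NonPrincIneq} to get the factor $\exp(-\Psi(\pm\rho)/\log X)$, and then use $\Psi(\pm\rho)\asymp\sqrt{x}$ and $\log X\asymp\log x$ to beat any power $x^{-B}$. The only cosmetic difference is that the paper writes the chain $\exp(-c\sqrt{x}/\log x)\ll\exp(-x^{1/4})\ll x^{-B}$ explicitly, whereas you appeal directly to $\sqrt{x}/\log x\to\infty$.
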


\begin{proof}
    Fix $B>0$ and let $x>x_0(B)$. When $\xa \in \fA \setminus \fP$ Lemma \ref{M:lem:NonPrincIneq} implies that
        \<
            \label{eq:expPsiBB}
            \Phi(\xr e(\xa)) = \exp \Psi(\rho e(\xa)) \less \expp{ \Psi(\rho) - \frac{\Psi(\rho)}{\logX} },
        \>    
    and since $\Psi(\xr) \asymp X \asymp x^{\nfr14}$ and $\logX \asymp \logx$ by \eqref{M:eq:XinxRecall}, we have
        \<
            \label{eq:expPsiBB2}
            \expp{-\frac{\Psi(\rho)}{\logX}} \less \expp{-\frac{x^{\nfr12}}{\log x}} \less \exp(-x^{\nfr14}) \less x^{-B}.  
        \>
    By \eqref{eq:expPsiBB} and \eqref{eq:expPsiBB2} then, we have
        \[
            \rho^{-x} \int_{\fA\setminus\fP} \Phi(\rho e(\xa)) e(-x\xa)\, d\xa \less \rho^{-x}e^{\Psi(\rho)} x^{-B} = \rho^{-x}\Phi(\rho)x^{-B},
        \]
    and similarly for the integral over $\fA_* \setminus \fP_*$ (by way of inequality \eqref{M:eq:PsiReBB*}), and the result follows at once from Corollary \ref{M:cor:transfer}.
\end{proof}

\section{The proof of Theorem \ref{M:thm:asymp}}
\label{sec:MAsymp}

To establish our asymptotic formula for $p(x,\mu)$ in Theorem \ref{M:thm:asymp}, it only remains to treat the principal-arc integrals in equation \eqref{eq:pxmuIntP}. We recall that we identify 
    \[
        \{e(\xa):\xa\in\fP\} \qqand \{e(\xa):\xa\in\fP_*\}
    \]
with
    \[
        \{ e(\xb) : \text{$\xb \in \rr$ and $|\xb|\leq \xh$} \} \qqand \{ -e(\xb) : \text{$\xb\in\rr$ and $|\xb|\leq\xh_*$} \},
    \]
respectively, where
    \[
        \xh = X^{-1}(\logX)^{-\nfr14} \qqand \xh_* = X_*^{-1}(\logX_*)^{-\nfr14}.
    \]

When $f$ is nonnegative it is clear that $|\Psi(z,f)| \leq \Psi(|z|,f)$ for $|z|<1$, and similarly for all derivatives of $\Psi(z,f)$. We recover a similar bound for $\Psi(z,\mu)$ in the following lemma.

\begin{lemma} 
    \label{lem:PsiDerivBB}
    Fix $A>0$. There exists a constant $X_A > 0$ such that for all $0 \leq j \leq 3$, $X > X_A$, and $|\xb| \leq \xh$ one has
        \<
            \label{eq:PsiDerivBB}
            |\PsiD{j}(\rho e(\xb))| \leq \rt{2} \PsiD{j}(\rho).
        \>
\end{lemma}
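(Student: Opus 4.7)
The strategy is simply to invoke Proposition \ref{M:prop:Princ} and use the elementary observation that $|1 - 2\pi iX\beta| \geq 1$ for all real $\beta$, since the real part is identically $1$. The factor $\sqrt{2}$ on the right-hand side is just a convenient constant strictly greater than $1$; any fixed constant $>1$ would do, and the stated range $0 \le j \le 3$ ensures the implicit constants stay uniform.

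First I would apply Proposition \ref{M:prop:Princ} at $\beta$ and at $\beta = 0$ to obtain
\[
    \Psi_{(j)}(\rho e(\beta)) = \frac{j!}{4}\left(\frac{X}{1-2\pi iX\beta}\right)^{j+1} + O_j\!\bigl(X^{j+1}(\log X)^{-A}\bigr)
\]
and
\[
    \Psi_{(j)}(\rho) = \frac{j!}{4}\, X^{j+1} + O_j\!\bigl(X^{j+1}(\log X)^{-A}\bigr).
\]
Since $1-2\pi iX\beta$ has real part $1$, one has $|1-2\pi iX\beta|^{-(j+1)} \leq 1$, so the main term of $\Psi_{(j)}(\rho e(\beta))$ has modulus at most $(j!/4)X^{j+1}$, which is precisely the main term of $\Psi_{(j)}(\rho)$.

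Combining these, the triangle inequality gives
\[
    \bigl|\Psi_{(j)}(\rho e(\beta))\bigr| \leq \frac{j!}{4}\, X^{j+1} + O_j\!\bigl(X^{j+1}(\log X)^{-A}\bigr) = \Psi_{(j)}(\rho)\bigl[1 + O_j((\log X)^{-A})\bigr],
\]
where for the last equality I divide through by the asymptotic expression for $\Psi_{(j)}(\rho)$ and absorb the ratio of error terms. Since $j$ only runs over $\{0,1,2,3\}$, the implicit constants $O_j(\cdot)$ are bounded, so choosing $X_A$ large enough that $1 + O((\log X)^{-A}) \leq \sqrt{2}$ for every $j \in \{0,1,2,3\}$ and $X > X_A$ yields the claim.

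There is no real obstacle here; the entire argument rests on the inequality $|1-2\pi iX\beta|\geq 1$ and the asymptotic from Proposition \ref{M:prop:Princ}. The bound on $j$ is only needed to collapse the $O_j$ dependence into a single threshold $X_A$, and the constant $\sqrt{2}$ is simply a convenient slack factor for the error term.
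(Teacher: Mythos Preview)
Your proposal is correct and follows essentially the same route as the paper: both apply Proposition~\ref{M:prop:Princ} at $\xb$ and at $\xb=0$, use $|1-2\pi iX\xb|\geq 1$ to dominate the main term, and then absorb the $O_j((\logX)^{-A})$ errors by choosing $X_A=\max_{0\leq j\leq 3} X_0(j,A)$ so that the ratio stays below $\sqrt{2}$. The paper writes the argument as a direct ratio $(1+c_j(\logX)^{-A})/(1-c_j(\logX)^{-A})$ rather than via the triangle inequality, but this is cosmetic.
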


\begin{proof}
    For $0 \leq j \leq 3$ let $c_j$ denote the implicit constants in equation \eqref{M:eq:Momop}. By said equation, for $X>X_0(A)$ we have
        \[
            \frac{|\PsiD{j}(\rho e(\xb))|}{\PsiD{j}(\rho)}= |1-2\pi iX\xb|^{-j-1}\lf|\frac{1+O_j(\log^{-A}X)}{1+O_j(\log^{-A}X)} \rh| \leq \frac{1+c_j\log^{-A}X}{1-c_j\log^{-A}X}.
        \]
    The latter fraction here is at most $\sqrt{2}$ for $X > X_0(j,A)$, and inequality \eqref{eq:PsiDerivBB} follows by setting $X_A := \max_{0\leq j\leq 3} X_0(j,A)$.
\end{proof}

We first consider the integral in \eqref{eq:pxmuIntP} over $\fP$ using arguments from \cite{vaughan2008number}*{sec.\ 4}. 

\begin{proposition}
    \label{M:prop:MainTerm}
    For $x>x_0$ let $\rho$ be the unique solution to equation \eqref{eq:sdp}. As $x\to\infty$, one has
        \<
            \label{M:eq:PrincIntegral0}
            \rho^{-x} \int_{\fP} \Phi(\rho e(\xa)) e(-x\xa)\,d\xa = \frac{\rho^{-x}\Phi(\rho)}{\sqrt{2\pi \PsiD{2}(\rho)}} \lf[ 1+O(x^{-\nfr15}) \rh].
        \>
\end{proposition}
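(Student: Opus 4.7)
The plan is to carry out the standard saddle-point/Gaussian-integral evaluation. First I would identify $\fP$ with the symmetric interval $[-\eta,\eta]$ using $\xa \leftrightarrow \xb$ (cf.\ \eqref{eq:prcId}), so that the integral to be evaluated becomes $\rho^{-x}\int_{-\eta}^{\eta} \Phi(\rho e(\xb))e(-x\xb)\,d\xb$. Writing $F(\xb) := \Psi(\rho e(\xb))$ and noting that $F^{(j)}(\xb) = (2\pi i)^j \PsiD{j}(\rho e(\xb))$, Taylor's theorem gives
\[
    F(\xb) = \Psi(\rho) + 2\pi i\xb\,\PsiD{1}(\rho) - 2\pi^2 \xb^2 \PsiD{2}(\rho) + R(\xb),
\]
where the integral-form remainder obeys $|R(\xb)| \less |\xb|^3 \sup_{|s|\leq|\xb|} |\PsiD{3}(\rho e(s))| \less |\xb|^3 \PsiD{3}(\rho)$ by Lemma \ref{lem:PsiDerivBB}. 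Since $\rho$ satisfies \eqref{eq:sdp}, i.e., $\PsiD{1}(\rho) = x$, the linear term is annihilated by the factor $e(-x\xb)$, yielding
\[
    \Phi(\rho e(\xb)) e(-x\xb) = \Phi(\rho) \exp\!\pth{-2\pi^2 \xb^2 \PsiD{2}(\rho) + R(\xb)}.
\]

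Next I would substitute $u = 2\pi\xb \sqrt{\PsiD{2}(\rho)}$, transforming the integral into
\[
    \rho^{-x}\int_{-\eta}^{\eta} \Phi(\rho e(\xb))e(-x\xb)\,d\xb = \frac{\rho^{-x}\Phi(\rho)}{2\pi\sqrt{\PsiD{2}(\rho)}} \int_{-U}^{U} e^{-u^2/2 + \widetilde R(u)}\,du,
\]
where $U := 2\pi\eta\sqrt{\PsiD{2}(\rho)}$ and $|\widetilde R(u)| \less |u|^3\PsiD{3}(\rho)/\PsiD{2}(\rho)^{3/2}$. Theorem \ref{M:thm:Relations} gives $\PsiD{j}(\rho) \asymp x^{(j+1)/2}$, so $\PsiD{3}(\rho)/\PsiD{2}(\rho)^{3/2} \asymp x^{-1/4}$ and $U \asymp x^{1/4}(\logx)^{-1/4}$. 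Hence $|\widetilde R(u)| \less |u|^3 x^{-1/4}$, and the crucial ratio satisfies $|\widetilde R(u)|/u^2 \less |u|x^{-1/4} \less (\logx)^{-1/4}$ uniformly on $[-U,U]$.

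I would then split the $u$-integral at $|u| = u_0 := x^{1/12}$. On the inner range $|u|\leq u_0$, one has $|\widetilde R(u)| \less 1$, so $e^{\widetilde R(u)} = 1 + O(|\widetilde R(u)|)$, and since $\int_{\rr} |u|^3 e^{-u^2/2}\,du < \infty$,
\[
    \int_{|u|\leq u_0} e^{-u^2/2 + \widetilde R(u)}\,du = \int_{|u|\leq u_0} e^{-u^2/2}\,du + O(x^{-1/4}) = \sqrt{2\pi} + O(x^{-1/4}).
\]
On the outer range $u_0 \leq |u| \leq U$, the uniform bound $|\widetilde R(u)| \leq u^2/4$ (valid for large $x$ by the above ratio estimate) gives $\Re(-u^2/2 + \widetilde R(u)) \leq -u^2/4$, so this contribution is bounded by $\int_{|u|\geq u_0} e^{-u^2/4}\,du \less e^{-x^{1/6}/4}$, which is negligible. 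Combining and using $\sqrt{2\pi\PsiD{2}(\rho)} = 2\pi\sqrt{\PsiD{2}(\rho)}/\sqrt{2\pi}$ delivers \eqref{M:eq:PrincIntegral0} with error $O(x^{-1/4})$, which is absorbed by the stated $O(x^{-1/5})$.

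The main technical obstacle is ensuring that the cubic remainder $\widetilde R(u)$ is controllable throughout the \emph{entire} range of integration, not merely near $u=0$; this is precisely where Lemma \ref{lem:PsiDerivBB} is used to pass from $|\PsiD{3}(\rho e(s))|$ to $\PsiD{3}(\rho)$, and where the $(\logX)^{-1/4}$ in the definition of $\eta$ is essential, since it shaves $U$ just below $x^{1/4}$ and thereby makes the cubic correction to the Gaussian exponent uniformly $o(u^2)$.
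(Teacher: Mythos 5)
Your proposal is correct and follows essentially the same route as the paper: a third-order Taylor expansion of $\Psi(\rho e(\xb))$ whose cubic remainder is controlled via Lemma \ref{lem:PsiDerivBB}, cancellation of the linear term by the saddle-point equation \eqref{eq:sdp}, a split of the range of integration, and evaluation of the resulting Gaussian integral. The only (harmless) difference is bookkeeping: the paper splits at $\xb\asymp X^{-3/2}(\logX)^{3/2}$ and bounds the cubic term uniformly there to extract $1+O(x^{-1/5})$, whereas you normalize to the $u$-variable and integrate the cubic term against the Gaussian, which in fact yields the slightly sharper $O(x^{-1/4})$.
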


\begin{remark}
    In the following proof, quantities $c_1, c_2, \ldots$ are positive constants.
\end{remark}

\begin{proof}
    For $|\xb| \leq \xh$ let $\Psi(\rho e(\xb))=R(\xb)+iI(\xb)$ for real-valued functions $R(\xb)$ and $I(\xb)$, and consider the third-order Taylor expansion with remainder:
        \begin{align*}
            R(\xb) &= R(0) + \xb R'(0) + \tf12 \xb^2 R''(0) + \tf16 \xb^3 R'''(c_R \xb), \\
            I(\xb) &= I(0) + \xb I'(0) + \tf12 \xb^2 I''(0) + \tf16 \xb^3 I'''(c_I \xb),
        \end{align*}
    where $c_R$ and $c_I$ are in $(0,1)$ and depend on $\xb$. One may easily see that
        \<
            \label{eq:RIDeriv}
            R^{(j)}\.(\xb) + iI^{(j)}\.(\xb) = \partial_\xb^{\,j}\hspace{-0.1em}\big[\Psi(\rho e(\xb))\big] = (2 \pi i)^j \PsiD{j}(\rho e(\xb))
        \>
    for all $j \geq 1$, and in particular that
        \begin{subequations}
        \begin{align}
            R'(\xb) + i I'(\xb) &= 2 \pi i \PsiD{1}(\rho e(\xb)), \\
            R''(\xb) + i I''(\xb) &= - 4 \pi^2 \PsiD{2}(\rho e(\xb)), \\
            \label{eq:RIDeriv2}
            R'''(\xb) + i I'''(\xb) &= -8 \pi^3 i \PsiD{3}(\rho e(\xb)).
        \end{align}
        \end{subequations}
    Applying Lemma \ref{lem:PsiDerivBB} to equation \eqref{eq:RIDeriv2}, we have
        \[
            |R'''(\xb)| \leq 8 \pi^3 \sqrt{2} \PsiD{3}(\rho), 
        \]
    and likewise for $|I'''(\xb)|$. With $c_0 := 8 \pi^3 \sqrt{2} \PsiD{3}(\rho)$, we implicitly define $w(\xb)$ via
        \<
            \label{eq:w}
            \xb^3 \big[R'''(c_R \xb) + i I'''(c_I \xb) \big] = 2 c_0 |\xb|^3  w(\xb)
        \>
    and see that $|w(\xb)| \leq 1$. Combining equations \eqref{eq:RIDeriv}--\eqref{eq:w}, we deduce that
        \<
            \label{eq:PsiTaylor}
            \Psi(\rho e(\xb)) = 
            \Psi(\rho) + 2\pi i\xb\PsiD{1}(\rho) - 2\pi^2\xb^2 \PsiD{2}(\rho) + \tf83 \pi^3 w(\xb)|\xb|^3\PsiD{3}(\rho).
        \>

    Recalling that $x = \PsiD{1}(\rho)$ we have $e(-x\xb) = \exp(-2\pi i\xb\PsiD{1}(\rho))$, and we thus write
        \<
            \label{eq:IntxQ}
            \xr^{-x} \int_{\fP} \exp\.\.\hspace{-0.04em}\big[\Psi(\rho e(\xa))\big] e(-x\xa) \,d\xa
                = \rho^{-x} \int_{-\xh}^{\xh} \exp\xQ(\rho,\xb) \,d\xb,
        \>
    where
        \<
            \label{eq:xQ}
            \xQ(\rho,\xb) :=
            \Psi(\rho) - 2 \pi^2 \xb^2 \PsiD{2}(\rho) + \tf83 \pi^3 w(\xb) |\xb|^3 \PsiD{3}(\rho).
        \>
    By Theorem \ref{M:thm:Relations} we have $\PsiD{2}(\rho) \great X^3$ and $\PsiD{3}(\rho) \less X^4$, so that when $X$ is sufficiently large and $|\xb| \leq \eta = X^{-1}(\logX)^{-1/4}$ we have
        \[
            \Re \xQ(\rho,\xb) \leq \Psi(\rho) - 1 \pi^2 \xb^2 \PsiD{2}(\rho).
        \]
    For those $\xb$ such that $|\xb| \geq X^{-3/2} (\logX)^{3/2}$ as well, we further deduce that
        \<
            \label{eq:xQReBB}
            \Re \xQ(\rho,\xb) \leq \Psi(\rho) - c_1(\logX)^3.
        \>
    Fixing $B>1$ and setting 
        \<
            \xd := X^{-\nfr32} (\logX)^{\nfr32},
        \>
    we use \eqref{eq:xQReBB} and repeat the arguments used for Proposition \ref{M:prop:pxmuReduc} to conclude that
        \<
            \label{M:eq:IntExtra}
            \xr^{-x} \int_{\xd\leq|\xb|\leq\xh} \exp\xQ(\xr,\xb) \,d\xb \less \xr^{-x}\Phi(\xr)x^{-B}.  
        \>

    It remains to consider \eqref{M:eq:PrincIntegral0} when $|\xb| < \xd$. For these $\xb$, Theorem \ref{M:thm:Relations} implies that
        \[
            \begin{aligned}
            &\tf83 \pi^3 |\xb|^3 w(\xb) \PsiD{3}(\rho) \less X^{-\fr92} (\logX)^{\fr92} X^4 \sim c_2 x^{-\fr14} \lf[ (\logx)^{\fr92} + \O{1} \rh] \less x^{-\fr15},
            \end{aligned}
        \]
    and we deduce that
        \<
            \rho^{-x} \int_{-\xd}^\xd \exp\xQ(\rho,\xb) \,d\xb = \rho^{-x}e^{\Psi(\rho)}\lf[1 + O(x^{-\fr15}) \rh] \int_{-\xd}^{\xd} \expp{ - 2\pi^2 \PsiD{2}(\rho) \xb^2 } \,d\xb.
        \>
    We have
        \[
            \int_{-\xd}^{\xd} \expp{ -2\pi^2 \PsiD{2}(\rho) \xb^2 } \,d\xb = \frac{1}{\rt{2\pi\PsiD{2}(\rho)}} + \O{\frac{\exp(-\xd^2\PsiD{2}(\rho))}{\xd\PsiD{2}(\rho)}},  
        \]
    and since $\PsiD{2}(\rho) \asymp X^3 \asymp x^{3/2}$ our error term here is 
        \[
            \less \frac{\exp(-c_3(\logX)^3)}{X^{3/2}(\logX)^{3/2}} \less \exp(-c_4 (\logx)^3) \less x^{-B}.
        \]
    In total then, we find that
        \[
            \rho^{-x} \int_{-\xd}^\xd \exp\xQ(\rho,\xb) \,d\xb
            = \rho^{-x}\Phi(\rho)\lf[ 1 + O(x^{-\nfr15}) \rh] \lf[ \frac{1}{\rt{2 \pi \PsiD{2}(\rho)}} + O(x^{-B}) \rh].
        \]
    Using the assumption that $B>1$ we deduce that
        \<
            \label{M:eq:IntDelta}
            \rho^{-x} \int_{-\xd}^\xd \exp\xQ(\rho,\xb) \,d\xb = \frac{\rho^{-x}\Phi(\rho)}{\rt{2 \pi \PsiD{2}(\rho)}} \lf[1+O(x^{-\nfr15})\rh],
        \>
    and the result follows from adding equations \eqref{M:eq:IntExtra} and \eqref{M:eq:IntDelta}, again due to the facts that $\PsiD{2}(\rho) \asymp x^{3/2}$ and $B > 1$.
\end{proof}

As is par for the course, the analysis of $\Psi(\rho_* e(\xa))$ for $\xa$ in $\fP_*$ is largely identical to the analysis for $\xa$ in $\fP$. Thus, let $\rho_*$ be the unique solution to \eqref{eq:sdp*}, and for $\xa \in \fP_*$ write $\xa = \tf12+\xb$ and $\rho_* e(\xa) = -\rho_* e(\xb)$ with $|\xb| \leq \xh_* = X_*^{-1}(\logX_*)^{-1/4}$. 

Like in Lemma \ref{lem:PsiDerivBB}, we easily deduce that for fixed $A>0$ there is some $X_A>0$ such that: For $0\leq j \leq 3$, $X_*>X_A$, and $\xa \in \fP_*$ one has
    \[
        |\PsiD{j}(\rho_* e(\xa))| = |\PsiD{j}(-\xr_* e(\xb))| \leq \rt{2}\PsiD{j}(-\rho_*).
    \]
As $x = \PsiD{1}(-\rho_*)$ now, this time we have 
    \[
        e(-x\xa) = e(-\tf12x)e(-x\xb) = e(-\tf12 x) \exp(-2\pi i \xb \PsiD{1}(-\rho_*)).
    \]
Letting $\Psi(-\rho_* e(\xb)) = R(\xb)+iI(\xb)$ for some new $R$ and $I$, we again compute that 
    \[
        \Psi(-\rho_* e(\xb)) = \Psi(-\rho_*) + 2 \pi i \xb \PsiD{1}(-\rho_*) - 2 \pi^2 \xb^2 \PsiD{2}(-\rho_*) 
            + \tf83 \pi^3 w_*(\xb) |\xb|^3 \PsiD{3}(-\rho_*),
    \]
cf.\ \eqref{eq:PsiTaylor}, where $w_*(\xb)$ is defined as $w(\xb)$ was in equation \eqref{eq:w}. It follows that
    \<
        \label{M:eq:IntxQ*}
        \xr_*^{-x} \int_{\fP_*} \exp\!\big[\Psi(\rho_* e(\xa))\big] e(-x\xa) \,d\xa 
        =  e(-\tf12 x)\xr_*^{-x} \int_{-\xh_*}^{\xh_*} \exp\xQ_*(\rho_*,\xb) \,d\xb,
    \>
where
    \<
        \label{eq:xQ*}
        \xQ_*(\rho_*,\xb) := \Psi(-\rho_*) - 2 \pi^2 \xb^2 \PsiD{2}(-\rho_*) + \tf83 \pi^3 w_*(\xb) |\xb|^3 \PsiD{3}(-\rho_*).
    \>

Considering the overwhelming similarities between equations \eqref{M:eq:IntxQ*} and \eqref{eq:xQ*} and equations \eqref{eq:IntxQ} and \eqref{eq:xQ}, respectively, and the symmetric relations for $\PsiD{j}(\pm\xr)$ in Theorem \ref{M:thm:Relations}, we simply repeat the proof of Proposition \ref{M:prop:MainTerm} to conclude that
    \<
        \label{M:eq:MainTerm*}
        \rho_*^{-x} \int_{\fP_*} \Phi(\rho_* e(\xa)) e(-x\xa)\,d\xa = e(-\tf12 x) \frac{\rho_*^{-x}\Phi(-\rho_*)}{\sqrt{2\pi\PsiD{2}(-\rho_*)}} \lf[ 1+O(x^{-\nfr15}) \rh].
    \>

\begin{remark}
   To maintain consistency between the formulae for $p(x,f)$ and $p(n,f)$, in the remainder of this paper we write $e(-\tf12 x) = (-1)^{-x}$ and understand all future occurrences of $(-1)^{-x}$ to use this identification.
\end{remark}

Combining Propositions \ref{M:prop:pxmuReduc} and \ref{M:prop:MainTerm} and equation \eqref{M:eq:MainTerm*}, as $x\to\infty$ one has
    \<
        \label{M:eq:pxmuAsymp}
        \begin{aligned}
            p(x,\mu) &= \frac{\rho^{-x}\Phi(\rho)}{\rt{2\pi\PsiD{2}(\rho)}} \lf[ 1+O(x^{-\nfr15}) \rh]
            + (-1)^{-x} \frac{\rho_*^{-x}\Phi(-\rho_*)}{\rt{2\pi\PsiD{2}(-\rho_*)}} \lf[ 1+O(x^{-\nfr15}) \rh] \\
            & \qquad + O(e^{(\fr12+\xe)\rt{x}}),
        \end{aligned}
    \>
which is nearly equation \eqref{M:eq:pxmuForm}. Using Theorem \ref{M:thm:Relations} one finds that
    \[
        \frac{\rho^{-x}\Phi(\rho)}{\sqrt{2\pi\PsiD{2}(\rho)}} = \exp\!\Big(\Psi(\rho) - x \log(\rho) - \tf12 \log(2\pi\PsiD{2}(\rho))\Big) = \expp{\sqrt{x} + o(\sqrt{x})}, 
    \]
whereby we simplify equation \eqref{M:eq:pxmuAsymp} by writing 
    \[
        \frac{\rho^{-x}\Phi(\rho)}{\rt{2\pi\PsiD{2}(\rho)}} \lf[ 1+O\hspace{-0.05em}\big(x^{-\nfr15}\big) \rh] + O\hspace{-0.05em}\big(e^{(\fr12+\xe)\rt{x}}\big) = \frac{\rho^{-x}\Phi(\rho)}{\rt{2\pi\PsiD{2}(\rho)}} \lf[ 1+O\hspace{-0.05em}\big(x^{-\nfr15}\big) \rh]
    \] 
to finally yield the formula
    \<
        \label{M:eq:pxmuAsymp2}
        p(x,\mu) = \fr{\xr^{-x}\Phi(\xr)}{\rt{2\pi\PsiD{2}(\xr)}} \lf[ 1+O(x^{-\nfr15}) \rh]
        + (-1)^{-x} \fr{\xr_*^{-x}\Phi(-\xr_*)}{\rt{2\pi\PsiD{2}(-\xr_*)}} \lf[ 1+O(x^{-\nfr15}) \rh],
    \>
and the proof of Theorem \ref{M:thm:asymp} is complete.


\section{Formulae for \tops{$p(x,\xl)$}{p(x,λ)}}
\label{sec:Liouville}

Given the functions' strong connection, results for one of M\"obius' $\mu$ and Liouville's $\xl$ functions are often easily translated into results for the other. Our formulae for $p(x,\mu)$ and $p(x,\xl)$ conform to this pattern. Many of the proofs of our lemmata for $p(x,\xl)$ are nearly identical to the corresponding ones for $p(x,\mu)$, and indeed often easier since $\xl^2(n) = 1$ for all $n$, so our analyses in this section are notably briefer than those of the previous sections.

While the sets $\fM(X,Q)$, $\fm(X,Q)$, $\fP$, and $\fP_*$ are all defined as in section \ref{sec:prelim}, our choice of $Q$ for $p(x,\xl)$ is not the same as it is for $p(x,\mu)$, so we start anew with an unknown $Q$ such that $1 \leq Q \leq X$. All other notations are also maintained, but we emphasize that $\Phi(z)$ and $\Psi(z)$ now denote $\Phi(z,\xl)$ and $\Psi(z,\xl)$, respectively. In particular, equations \eqref{eq:sdp} and \eqref{eq:sdp*} are still written as  $\PsiD{1}(\rho) = x$ and $\PsiD{1}(-\rho)=x$. As done in our analysis of $\Psi(\xr e(\xa),\mu)$, in analyzing $\Psi(\xr e(\xa),\xl)$ we do not assume $\xr$ to be a solution to \eqref{eq:sdp} or \eqref{eq:sdp*} until necessary.

Finally we note that the functions $F_0(z)$ and $F_1(z)$ of \eqref{F:eq:F0F1Defin} are now given by the formulae
    \[
        F_0(z) = \sum_{n=1}^\infty z^n \qquad \text{and} \qquad F_1(z) = \sum_{n=1}^\infty \xl(n) z^n.
    \]

For the analysis of $\Psi(\rho e(\xa))$ for $\xa \in \fm(X,Q)$, Davenport's inequality \eqref{eq:Davenport} for $S_\mu(t,\xq)$ is replaced by a similar bound for $S_\xl(t,\xq)$ due to Bateman and Chowla.

\begin{lemma}[\cite{bateman1963some}*{Lem.\ 1}]
	\label{L:lem:BCBB}
    Fix $A > 0$. For $t > t_0$ one has $S_\xl(t,\xq) \less t (\log t)^{-A}$ uniformly for $\xq \in \rr$.
\end{lemma}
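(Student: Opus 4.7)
The plan is to deduce this from Davenport's inequality \eqref{eq:Davenport} via the elementary identity
    \[
        \xl(n) = \sum_{d^2 \mid n} \xm(n/d^2),
    \]
which is the arithmetic counterpart of the Dirichlet-series relation $\sum_n \xl(n)n^{-s} = \xz(2s)/\xz(s)$. Writing $n = d^2 m$, this identity immediately yields
    \[
        S_\xl(t,\xq) = \sum_{d \leq \rt{t}} \,\sum_{m \leq t/d^2} \xm(m) e(d^2 m \xq) = \sum_{d \leq \rt{t}} S_\xm(t/d^2,\, d^2 \xq).
    \]
Since Davenport's bound is uniform in its angular variable, the bound I derive on $S_\xl(t,\xq)$ will inherit this uniformity in $\xq$ automatically.

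Next I would split the $d$-sum at the threshold $D := (\log t)^{A+1}$. For $1 \leq d \leq D$, the inequality $d^2 \leq (\log t)^{2(A+1)}$ ensures (for $t > t_0$) that $t/d^2 \geq 3$ and $\log(t/d^2) \geq \tf12 \log t$. Applying Davenport's bound \eqref{eq:Davenport} with exponent $A+1$ then gives
    \[
        |S_\xm(t/d^2,\, d^2 \xq)| \less \frac{t/d^2}{(\log t)^{A+1}},
    \]
and summing over $d \leq D$ produces a total contribution $\less t(\log t)^{-(A+1)} \sum_{d=1}^\infty d^{-2} \less t(\log t)^{-A}$. For the remaining range $D < d \leq \rt{t}$, the trivial estimate $|S_\xm(t/d^2, d^2\xq)| \leq t/d^2$ contributes $\less t \sum_{d > D} d^{-2} \less t/D = t(\log t)^{-(A+1)}$. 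Adding the two ranges produces the asserted bound, and the uniformity in $\xq$ is immediate from the corresponding uniformity in Davenport's inequality.

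I do not anticipate any substantive obstacle: the argument is a one-step reduction to Davenport's inequality, and the only delicate point is the choice of $D$, which must be small enough that $\log(t/d^2) \asymp \log t$ throughout the Davenport range while still being large enough to render the trivial tail acceptable. The value $D = (\log t)^{A+1}$ accomplishes both simultaneously.
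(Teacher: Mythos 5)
Your proof is correct and follows essentially the same route as the paper: both use the identity $\xl(n)=\sum_{d^2\mid n}\mu(n/d^2)$ to write $S_\xl(t,\xq)=\sum_{d\le\rt{t}}S_\mu(t/d^2,d^2\xq)$, apply Davenport's uniform bound on an initial range of $d$, and bound the tail trivially. The only difference is the (immaterial) choice of split point — $(\log t)^{A+1}$ for you versus $t^{1/4}$ in the paper — and both choices close the argument.
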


\begin{proof}
    By the well known identity $\xl(n) = \sum_{d^2 \;|\; n} \mu(n/d^2)$, one has
        \[
            \sum_{n \leq t} \xl(n) e(n\xq) = \sum_{d \leq \rt{t}} \, \sum_{m \leq t/d^2} \mu(m) e(md^2\xq) = \sum_{d \leq \rt{t}} S_{\mu}(t d^{-2}, d^2 \xq).
        \]
    Using Davenport's inequality \eqref{eq:Davenport} when $d \leq t^{1/4}$, and trivially bounding
        \[ 
            \lf| S_{\mu}(t d^{-2}, d^2 \xq) \rh| \leq \sum_{t^{1/4} < d \leq \rt{t}} t d^{-2} \leq t \sum_{d > t^{1/4}} d^{-2} \less t \cdot t^{-1/4},
        \]
    the result follows.
\end{proof}

Because Lemma \ref{L:lem:BCBB} provides a perfect substitute for Davenport's inequality \eqref{eq:Davenport}, we simply repeat the proof of Lemma \ref{M:lem:Minor1} to immediately recover the estimate 
    \<
        \label{L:eq:MajorPsi1BB}
        \Psi_1(\rho e(\xa)) \less X (\logX)^{-A} \qquad (\xa \in \rr).
    \>
Our analysis of $\Psi_0(\rho e(\xa))$ is greatly expedited by the well known inequality
    \<
        \label{eq:GeomSumBB}
        S_{\xl^2}(t,\xq) = \sum_{n\leq t} e(n\xq) \less \min\{t,\|\xq\|^{-1}\},
    \>
and we remark that \eqref{eq:GeomSumBB} serves as the $\Psi(z,\xl)$-analogue of Lemma \ref{lem:Brudern}.

\begin{lemma}
    \label{L:lem:minor}
	Fix $A > 0$ and let $Q = X^{1/3}$. For $X > X_0(A)$ and $\xa \in \fm(X,Q)$ one has
		\[
			\Psi(\rho e(\xa)) \less X (\logX)^{-A}.
        \]
\end{lemma}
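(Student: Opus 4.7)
The plan is to mirror Proposition~\ref{M:prop:Minor}, with the Br\"udern--Perelli bound (Lemma~\ref{lem:Brudern}) replaced by the elementary geometric-sum estimate \eqref{eq:GeomSumBB}. Writing $\Psi = \Psi_0 + \Psi_1$ and invoking \eqref{L:eq:MajorPsi1BB} for $\Psi_1$, it will suffice to show $\Psi_0(\xr e(\xa)) \less X(\logX)^{-A}$. Since $\xl^2 \equiv 1$ forces $F_0(z) = \sum_{n\geq 1} z^n$, Abel summation yields
\[
    F_0(\xr^k e(k\xa)) = \frac{k}{X} \int_0^\infty e^{-tk/X}\, S_{\xl^2}(t, k\xa)\,dt,
\]
and inserting $S_{\xl^2}(t, k\xa) \less \min\{t, \|k\xa\|^{-1}\}$ from \eqref{eq:GeomSumBB}, followed by the obvious split of the integral at $t = \|k\xa\|^{-1}$, gives the clean bound
\[
    F_0(\xr^k e(k\xa)) \less \min\{X/k,\,\|k\xa\|^{-1}\}.
\]

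The crux is the Diophantine claim that $\|k\xa\|^{-1} \leq X/Q$ whenever $\xa \in \fm(X,Q)$ and $1 \leq k \leq Q$. If one had $\|k\xa\| < Q/X$, pick $n \in \zz$ with $|k\xa - n| < Q/X$, set $d := (k,n)$, $a := n/d$, $q := k/d$; then $(a,q) = 1$, $q \leq k \leq Q$, and $|q\xa - a| = |k\xa - n|/d < Q/X$, contradicting the minor-arc hypothesis (the case $n=0$ is handled by $a/q = 0/1$). Granting this, I would take $K := Q$: Lemma~\ref{F:lem:TailBB} bounds the tail over even $k > K$ by $\less X/K$, while for the head one has
\[
    \sum_{\substack{k \leq K \\ k \equiv 0 \mod 2}} \frac{F_0(\xr^k e(k\xa))}{k} \less \sum_{k \leq K} \frac{X/Q}{k} \less \frac{X \log X}{Q}.
\]
With $Q = X^{1/3}$ both contributions are $\less X^{2/3}\log X$, which is $o(X(\logX)^{-A})$ for any fixed $A$ and sufficiently large $X$; combined with the bound on $\Psi_1$ this completes the proof.

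The main obstacle is the Diophantine claim above, and it is also what dictates the choice $Q = X^{1/3}$. A smaller $Q$ would weaken the minor-arc hypothesis enough that the approximation-by-contradiction argument would fail for some admissible $k$, whereas a larger $Q$ would shrink the allowable range of $k$ and let the tail $X/K$ dominate; $X^{1/3}$ balances these. Beyond this point the analysis is routine, reflecting the general principle that the Liouville case is structurally easier than the M\"obius case because $\xl^2 \equiv 1$ collapses $F_0$ into a geometric-type sum.
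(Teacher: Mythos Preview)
Your proof is correct and in fact streamlines the paper's argument. Both proofs reduce to bounding $\Psi_0$ via the geometric-sum estimate \eqref{eq:GeomSumBB} and a lower bound on $\|k\xa\|$; the difference lies in how that lower bound is obtained. The paper routes through Lemma~\ref{lem:MinorIncl} with $\xk=\tfrac13$, restricting to $k\le Q^{1/3}=X^{1/9}$ and obtaining $\|k\xa\|^{-1}<XQ^{-2/3}$, which after Lemma~\ref{F:lem:Fundamental} yields $\Psi_0(\xr e(\xa))\less X^{8/9}$. Your direct contrapositive argument (reducing $n/k$ to lowest terms and landing in a major arc) gives the sharper conclusion $\|k\xa\|^{-1}\le X/Q$ on the wider range $k\le Q$; with $K=Q=X^{1/3}$ this produces the stronger bound $\Psi_0(\xr e(\xa))\less X^{2/3}\log X$. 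Your Diophantine claim is exactly the $q_k=1$ case of the reasoning inside Lemma~\ref{lem:MinorIncl}'s proof, so nothing is being smuggled in.

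One remark on your closing paragraph: your own argument does \emph{not} force $Q=X^{1/3}$. With $K=Q$ the head contributes $\less (X/Q)\log Q$ and the tail $\less X/Q$, and both are $o(X(\logX)^{-A})$ for any $Q=X^{\xd}$ with $0<\xd<1$; increasing $Q$ only improves both terms. The constraint $Q=X^{1/3}$ is inherited from the companion major-arc estimate (Lemma~\ref{L:lem:Major}), whose error term $O(Q\log K)$ must stay $o(X)$, not from the minor-arc side. This does not affect the validity of the present proof, since $Q=X^{1/3}$ is given as a hypothesis, but the balancing narrative you offer is off.
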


\begin{proof}
	By inequality \eqref{L:eq:MajorPsi1BB} it suffices to show that $\Psi_0(\rho e(\xa)) \less X(\logX)^{-A}$. 
    
    For the sake of exposition we begin with parameters $\xd, \xk \in (0,1/2)$ and let $Q=X^\xd$, let $K=Q^\xk$, and let $k \leq K$. By Lemma \ref{lem:MinorIncl} we have $\{k\xa\} \in \fm(X,Q^{1-\xk})$ for all $\xa \in \fm(X,Q)$ and all $k \leq Q^{\xk}$, which implies (for said $\xa$ and $k$) that $\|k\xa\| > Q^{1-\xk}/X$, i.e., that
		\<  
            \label{L:eq:kalphaNormInvBB}
			\|k\xa\|^{-1} < X Q^{\xk-1}.
		\>
	As $F_0(z) = \sum_{n=1}^\infty z^n$, by partial summation we have 
		\[
			F_0(\rho^k e(k\xa)) = \sum_{n = 1}^\infty e^{nk/X} e(k\xa) = \frac{k}{X} \int_0^\infty e^{-tk/X} S_{\xl^2}(t, k\xa) \,dt.
      	\]
	Bounding $S_{\xl^2}(t,k\xa) \less t$ when $t \leq XQ^{\xk-1}$ and $S_{\xl^2}(t,k\xa) \less \|k\xa\|^{-1}$ otherwise, it follows that
		\begin{align}
            \label{L:eq:F0Int2}
			F_0(\xr^{k} e(k\xa)) \less \frac{X}{k} \int_0^{kQ^{\xk-1}} e^{-u}u \,du + \|k\xa\|^{-1} \int_{k Q^{\xk-1}}^\infty e^{-u} \,du.
        \end{align}
    Integrating by parts and applying \eqref{L:eq:kalphaNormInvBB}, the right side of \eqref{L:eq:F0Int2} is seen to be
        \[			
            \less \frac{X}{k} \big( 1-(1+kQ^{\xk-1})e^{-kQ^{\xk-1}} \big) + XQ^{\xk-1} e^{-kQ^{\xk-1}} = \frac{X}{k}(1-e^{-kQ^{\xk-1}}).
		\]
	As $k Q^{\xk-1} \leq Q^{2\xk-1}$ and $2\xk-1 < 0$, we bound $1-e^{-u} \leq u$ for $0 \leq u \leq 1$ to deduce that
		\[
			F_0(\rho^k e(k\xa)) \less \frac{X}{k} ( 1 - e^{-X^{\xd(2\xk-1)}} ) \leq \frac{X}{k}X^{\xd(2\xk-1)}.
        \]
    Using $K = Q^\xk = X^{\xd\xk}$ we apply Lemma \ref{F:lem:Fundamental} to conclude that 
		\[
			\Psi_0(\rho e(\xa)) \less X^{1 + \xd(2 \xk - 1)} + X^{1 - \xd \xk},
		\]
	and letting $\xd = \xk = \tf13$ yields the bound $\Psi_0(\xr e(\xa)) \less X^{\fr{8}{9}}$, which is more than needed.
\end{proof}

We now consider $\Psi(\rho e(\xa))$ with $\xa$ in $\fM(X,Q)$, recalling the definition
    \[
        \fz = \xz(2) = \pi^2/6.
    \]

\begin{lemma}
    \label{L:lem:Major}
    Fix $A>0$, let $X > X_0(A)$, and let $Q = X^{1/3}$. Suppose that $\xa = a/q + \xb$ with $0 \leq a \leq q \leq Q$, $(a,q)=1$, and $|\xb| \leq Q/qX$. One has
    	\[
    		\Psi(\rho e(\xa)) =  \Wmaj(q) \pfr{\fz X}{1-2\pi iX\xb} + O(X(\logX)^{-A}),
    	\]
    where
        \[
            \Wmaj(q) = \begin{cases} \tf14q^{-2} & 2 \nmid q, \\ q^{-2} & 2 \mid q. \end{cases}
        \]
\end{lemma}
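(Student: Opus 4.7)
The plan is to split $\Psi = \Psi_0 + \Psi_1$ via \eqref{F:eq:Psi0Psi1Defin} and treat the two pieces separately. Since $\xl^2 \equiv 1$, the function $F_0(z) = z/(1-z)$ is a closed-form geometric series, while the proof of Lemma \ref{M:lem:Minor1}, applied mutatis mutandis with the Bateman--Chowla estimate (Lemma \ref{L:lem:BCBB}) in place of Davenport's inequality, yields $\Psi_1(\rho e(\xa)) \less X(\logX)^{-A}$ uniformly in $\xa$ (the bound already recorded at \eqref{L:eq:MajorPsi1BB}). It thus suffices to show
\[
    \Psi_0(\rho e(\xa)) = \Wmaj(q)\,\fz/\tau + O(X(\logX)^{-A}), \qquad \tau := X^{-1}(1-2\pi iX\xb),
\]
since $\fz/\tau = \fz X/(1-2\pi iX\xb)$ matches the claimed main term.

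With $z = \rho^k e(k\xa) = e^{-k\tau} e(ka/q)$, set $d = (q,k)$, $q_k = q/d$, $a_k = ak/d$, so that $(a_k,q_k) = 1$ and $e(ka/q) = e(a_k/q_k)$. If $q \mid k$ then $q_k = 1$, and Taylor expansion gives
\[
    F_0(\rho^k e(k\xa)) = \frac{e^{-k\tau}}{1-e^{-k\tau}} = \frac{1}{k\tau} - \tf12 + O(k|\tau|).
\]
If $q \nmid k$ then $q_k \geq 2$ and $\|a_k/q_k\| \geq 1/q_k$, whence $|1 - e(a_k/q_k)| \geq 4/q_k$ by $|\sin\pi x| \geq 2|x|$ on $[0,\tf12]$; since $k|\tau| \less KQ/(qX)$ is negligible for the truncation $k \leq K := X^{1/2}$, the reverse triangle inequality yields $|1 - e(a_k/q_k)e^{-k\tau}| \gg 1/q_k$, so $F_0(\rho^k e(k\xa)) \less q_k$.

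Summing over even $k \leq K$ and using Lemma \ref{F:lem:TailBB} for the tail with error $\less X/K$, the $q\mid k$ contribution is $\tau^{-1}\sum_{2|k,\,q|k,\,k\leq K} k^{-2}$ plus errors $O(\log K) + O(K|\tau|)$; extending the $k^{-2}$ sum to all $k \geq 1$ at additional cost $O(X/K)$ and computing directly, one finds $\sum_{2|k,\,q|k} k^{-2}$ equals $\fz/q^2$ if $2 \mid q$ (since then $q|k$ forces $2|k$) and $\fz/(4q^2)$ if $2 \nmid q$ (since then $q|k$ and $2|k$ force $2q|k$), reproducing $\Wmaj(q)\fz/\tau$ exactly. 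The $q \nmid k$ terms contribute at most $\sum_{k \leq K} q_k/k \less q\log K$, obtained by grouping on $d = (q,k)$; with $Q = X^{1/3}$ this totals $O(X^{1/2}) + O(X^{1/3}\logX)$, which is $\less X(\logX)^{-A}$ for $X > X_0(A)$.

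The only real subtlety is correctly identifying $\Wmaj(q)$; the parity case distinction arises because $\Psi_0$ only sees even $k$, so whether $q$ already absorbs the condition $2 \mid k$ changes the effective density of the main-term indices by a factor of $4$. The rest of the proof closely mirrors Proposition \ref{M:prop:Major}, but is substantially cleaner because the closed-form $F_0(z) = z/(1-z)$ eliminates any need for analogues of the squarefree-counting Lemma \ref{M:lem:SqFreeAsymp} or the exponential-sum identity \eqref{eq:gsum}.
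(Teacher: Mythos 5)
Your proof is correct, and while its skeleton matches the paper's (split $\Psi=\Psi_0+\Psi_1$, dispose of $\Psi_1$ via the Bateman--Chowla bound \eqref{L:eq:MajorPsi1BB}, truncate at $k\le K$, extract the main term from the indices with $q\mid k$, and compute $\sum_{2\mid k,\,q\mid k}k^{-2}$ by parity of $q$ to get $\Wmaj(q)\fz$), the evaluation of $F_0(\rho^k e(k\xa))$ is genuinely different. The paper mirrors the M\"obius case: it groups $n$ into residue classes modulo $q_k$, sums by parts against the count $t/q_k+O(1)$, and then invokes the orthogonality of the complete exponential sum $\sum_{\ell=1}^{q_k}e(a_k\ell/q_k)$ to kill the $q\nmid k$ terms, ending with errors $O(X/K)+O(Q\log K)$ and the choice $K=X^{1/9}$. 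You instead exploit the closed form $F_0(z)=z/(1-z)$: a Laurent expansion $1/(e^{k\xt}-1)=1/(k\xt)-\tfrac12+O(k|\xt|)$ when $q\mid k$, and the elementary lower bound $|1-e(a_k/q_k)e^{-k\xt}|\gg 1/q_k$ when $q\nmid k$ (valid since $k|\xt|\ll KQ/X=X^{-1/6}=o(1/q)$ for $k\le K=X^{1/2}$). Your error terms $O(X/K)+O(q\log K)+O(K|\xt|)$ are all comfortably $O(X(\logX)^{-A})$, so the different choice of $K$ is harmless. What your route buys is the elimination of partial summation, the lattice-point count, and the exponential-sum identity for this lemma; what the paper's route buys is structural uniformity with Proposition \ref{M:prop:Major}, where no closed form for $F_0$ exists and the residue-class machinery is unavoidable.
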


\begin{proof}
    As $\Psi_1(\xr e(\xa)) \less X (\log X)^{-A}$ per \eqref{L:eq:MajorPsi1BB}, it remains only to analyze $\Psi_0(\rho e(\xa))$. Again let $\tau = X^{-1}(1-2\pi iX\xb)$ so that $\rho e(\xa) = e(a/q)e^{-\tau}$, and set $q_k := q/(q,k)$ and $a_k := ak/(q,k)$. Grouping summands modulo $q_k$, we have
    	\<
    		\label{L:eq:Major2}
            F_0( \rho^k e(k\xa) )  = \sum_{n=1}^\infty e\pfr{nka}{q} \rho^{nk} e(nk\xb)
    		= \sum_{\ell =1}^{q_k} e\pfr{a_k \ell}{q_k} \sum_{ n \equiv \ell \mod{q_k}} e^{-nk \tau}.
    	\>

	Summing by parts we find that $\sum_{n\equiv\ell \mod{q_k}} e^{-nk\tau}$ is equal to
        \begin{align*}
            k \tau \int_0^\infty e^{-xk \tau} \lf(\frac{x}{q_k} + O(1)\rh) \,dx = \fr1{q_k}\int_0^\infty e^{-xk\tau} xk\tau \,dx + \O{k|\tau|\int_0^{\infty}e^{-xk/X} \, dx},
        \end{align*}
    and arguing as in the derivation of \eqref{M:eq:MajorGammaInt} we deduce that
        \<
            \label{L:eq:MajorMainInt}
            \sum_{n\equiv\ell \mod{q_k}} e^{-nk\tau} = \frac{1}{kq_k\tau} + O(|1-2\pi iX\xb|) = \frac{1}{kq_k\tau} + O\pfr{Q}{q}.
        \>
	Combining \eqref{L:eq:Major2} and \eqref{L:eq:MajorMainInt}, it follows that
        \[
            F_0( \rho^k e(k\xa) ) 
            = \frac{1}{kq_k\tau} \sum_{\ell =1}^{q_k} e\pfr{\ell a_k }{q_k}  
            + O(Q).
        \]
    Since $(a_k,q_k) = 1$, the sum here is equal to $q_k$ when $q_k=1$ (i.e., when $q\;|\;k$) and equal to 0 otherwise, whereby
        \[
            \sum_{\substack{k \leq K \\ 2 \;\mid\; k}} \frac{1}{k} F_0(\rho^k e(k\xa)) 
			= \xt^{-1} \sum_{\substack{ k \leq K \\ 2\;\mid\;k \\ q\;\mid\;k }} \fr{1}{k^2} + \O{Q \log K}.
        \]
    Conditioning on the parity of $q$ and noting that $|\tau^{-1}| \leq X$, we find that
        \[
            \sum_{\substack{k \leq K \\ 2 \;\mid\; k}} \frac{1}{k} F_0(\rho^k e(k\xa))
            = \fz \Wmaj(q) \xt^{-1} + O\pfr{X}{K} + O(Q\log K),
        \]
    and by applying Lemma \ref{F:lem:TailBB} it follows that
        \[
            \Psi_0(\rho e(\xa)) = \fz \Wmaj(q) \tau^{-1} + O(X/K) + \O{Q\log K}.
        \]
    The result then follows upon substituting $Q = X^{1/3}$ and $K = Q^{1/3} = X^{1/9}$, in agreement with the parameters $\xd$ and $\xk$ used in proving Lemma \ref{L:lem:minor}.
\end{proof}

\begin{remark}
    \label{L:rem:VW}
    Recalling from Proposition \ref{M:prop:Major} the constants $V(q)$, which are given by the formula
        \[
            V(q) = \begin{cases}
                (2q)^{-2} \tilde{V}(q) & 2 \nmid q, \\
                q^{-2} \tilde{V}(q/2) & 2 \mid q,
            \end{cases}
        \]
    where $\tilde{V}(q)$ in the multiplicative function defined on prime powers $p^k>1$ via
        \[
            \tilde{V}(p^k) = \begin{cases}
                0 & k=1, \\
                -p^2 & k \geq 2,
            \end{cases}
        \]
    we see that $V(q)$ and $W(q)$ have structures as functions of $q$. In this spirit, one could, pedantically, even say that
        \[
            W(q) = \begin{cases}
                (2q)^{-2} \tilde{W}(q) & 2 \nmid q, \\
                q^{-2} \tilde{W}(q/2) & 2 \mid q,
            \end{cases} \qquad\text{where}\qquad \tilde{W}(q) := 1.
        \]
\end{remark}

As done for our analysis of $\Psi(\xr e(\xa),\mu)$ for $\xa$ in the principal arcs, we again let
    \[
        \xh = X^{-1}(\logX)^{-\nfr14} \qqand \xh_* = X_*^{-1}(\logX_*)^{-\nfr14}
    \]
and identify the sets
    \[
        \{e(\xa):\xa\in\fP\} \qqand \{e(\xa):\xa\in\fP_*\}
    \]
with
    \[
        \{ e(\xb) : \text{$\xb \in \rr$ and $|\xb|\leq \xh$} \} \qqand \{ -e(\xb) : \text{$\xb\in\rr$ and $|\xb|\leq\xh_*$} \},
    \]
respectively. As such we have the following analogue of Proposition \ref{M:prop:Princ}.

\begin{lemma}
    \label{L:lem:Princ}
    Fix $A > 0$. For all $X > X_0(A)$, $j \geq 0$, and $|\xb|\leq\xh$, one has
        \begin{align}
            \label{eq:xlprc}
		    \PsiD{j}(\pm\rho e(\xb)) &= j!\: \fr{\fz}{4} \pfr{X}{1-2\pi iX\xb}^{j+1}  + O_j(X^{j+1}(\logX)^{-A}), \\
            \label{eq:xlprc2}
            (\pm1)^j\Psid{j}(\pm\rho e(\xb)) &= j!\: \fr{\fz}{4} \pfr{X}{1-2\pi iX\xb}^{j+1} + O_j(X^{j+1}(\logX)^{-A}).
        \end{align}
\end{lemma}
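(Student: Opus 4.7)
The proof follows the blueprint of Proposition \ref{M:prop:Princ} essentially verbatim, with the Dirichlet series $\fD[\xl^2;s] = \xz(s)$ and $\fD[\xl;s] = \xz(2s)/\xz(s)$ replacing $\fD[\mu^2;s] = \xz(s)/\xz(2s)$ and $\fD[\mu;s] = 1/\xz(s)$ in the Mellin formulae \eqref{eq:PsiMellin0} and \eqref{eq:PsiMellin1}. The effect of these substitutions is that the main term of $(\Psi_0)_{(j)}(\xr e(\xb))$ now arises from a simple pole at $s = 1$ whose residue carries an extra factor of $\xz(2) = \fz$, thereby accounting for the factor $\fz/4$ appearing in \eqref{eq:xlprc}.

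Concretely, I would first write
\[
    (\Psi_0)_{(j)}(\xr e(\xb)) = \ldint{2} 2^{-s-1}\xz(s+1)\xz(s)\xG(s+j)\xt^{-s-j}\, ds,
\]
identify the simple pole of the integrand at $s=1$, and compute its residue as $\tf14\xz(2)\,j!\,\xt^{-j-1} = (\fz\, j!/4)(X/(1-2\pi iX\xb))^{j+1}$. Truncating the contour at height $T$ and shifting the line of integration to $\xs_* = 1 - c/\log T$ (which lies safely inside the zero-free region \eqref{eq:ZetaZeroFree}), and then invoking \eqref{eq:ZetaBB} and \eqref{eq:GammaBB} exactly as in Lemma \ref{M:lem:PrincPsi1}, we pick up the residue plus an error of size $O_j(X^{j+1}(Xe^{-\pi T/5} + X^{-c/\log T}\log T))$. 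The choice $T = \exp(\rt{\logX})$ converts this to $O_j(X^{j+1}(\logX)^{-A})$.

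For $(\Psi_1)_{(j)}(\xr e(\xb))$, the Mellin integrand becomes $(1 - 2^{-s-1})\xz(s+1)(\xz(2s)/\xz(s))\xG(s+j)\xt^{-s-j}$. The only novelty is the factor $\xz(2s)$, which has a simple pole at $s = \tf12$; however, since one shifts only to $\xs_* = 1 - c/\log T > \tf12$, this pole is never crossed, and on the shifted contour $|\xz(2s)| \less 1$ because $\Re(2s) > 1$. With this harmless modification the argument of Lemma \ref{M:lem:PrincPsi1} transfers literally and yields $(\Psi_1)_{(j)}(\xr e(\xb)) \less_j X^{j+1}(\logX)^{-A}$, which is absorbed into the error. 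Summing the two contributions delivers \eqref{eq:xlprc} for $z = +\xr e(\xb)$.

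For the case $z = -\xr e(\xb)$: equation \eqref{eq:MopjPsi} shows that $(\Psi_0)_{(j)}(z)$ is an even function of $z$, so no change is needed there. For $\Psi_1$, the complete multiplicativity of $\xl$ (i.e., $\xl(2n) = -\xl(n)$) gives, after a short calculation, $\fD[(-1)^n\xl(n);s] = -(1 + 2^{1-s})\,\fD[\xl;s]$, and the extra factor $-(1+2^{1-s})$ is holomorphic and bounded on $\xs > \tf12$, so the shift-of-contour argument applies without modification. Finally, equation \eqref{eq:xlprc2} follows from \eqref{eq:xlprc} via the Stirling-number identity \eqref{eq:DiffOp} together with the observation $(\pm\xr e(\xb))^j = (\pm 1)^j + O_j(X^{-1})$, exactly as at the end of the proof of Proposition \ref{M:prop:Princ}. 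No step here presents a genuine obstacle; the only mild wrinkle, the pole of $\xz(2s)$ at $s = \tf12$, lies strictly to the left of the contour we shift to and is therefore invisible to the argument.
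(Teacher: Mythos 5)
Your proposal is correct and follows essentially the same route as the paper, which itself defers to the proof of Proposition \ref{M:prop:Princ} with the substitutions $\fD[\xl^2;s]=\xz(s)$ and $\fD[\xl;s]=\xz(2s)/\xz(s)$, the residue $2^{-2}\xz(2)\xG(j+1)\xt^{-j-1}$ at $s=1$, and the identity $\fD[(-1)^n\xl(n);s]=-(1+2^{1-s})\xz(2s)/\xz(s)$ for the $-\rho e(\xb)$ case. The one harmless deviation is that you shift the $\Psi_0$-contour only to $\xs_*=1-c/\log T$ rather than to $\tfrac12+\xe$ as the paper does for the M\"obius analogue; this yields a weaker error for $(\Psi_0)_{(j)}$ but still within the claimed $O_j(X^{j+1}(\logX)^{-A})$.
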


\begin{proof}
	The proof is nearly identical to the proof of Proposition \ref{M:prop:Princ}, following the requisite substitutions. In particular, the integrands now involve
		\begin{align*}
			\Dir{\xl}{s} &= \fr{\xz(2s)}{\xz(s)} \qqand \Dir{\xl^2}{s} = \xz(s)
		\end{align*}
	in place of $\Dir{\mu}{s}$ and $\Dir{\mu^2}{s}$, respectively. Our main terms in \eqref{eq:xlprc} and \eqref{eq:xlprc2} now come from a residue of
        \[
            2^{-2}\xz(2)\xG(j+1)\xt^{-j-1} = j!\: \fr{\fz}{4} \pfr{X}{1-2\pi iX\xb}^{j+1},  
        \] 
    cf.\ expression \eqref{eq:mures}. When $z = -\rho e(\xb)$, similar to formula \eqref{M:eq:DirNeg1} we find that
        \[
            \Dir{(-1)^n \xl(n)}{s} = -(1+2^{1-s})\frac{\xz(2s)}{\xz(s)},
        \]
    and again the arguments used in proving Proposition \ref{M:prop:Princ} yield the result.
\end{proof}

Using Lemma \ref{L:lem:Major} in place of Proposition \ref{M:prop:Major}, the arguments of section \ref{sec:Relations} for Lemma \ref{lem:sdpt} and Theorem \ref{M:thm:Relations} similarly yield the uniqueness of the solutions $\xr$ and $\xr_*$ of \eqref{eq:sdp} and \eqref{eq:sdp*}, respectively, as well as the relations of Theorem \ref{L:thm:Relations}. For convenience, again using $\fz = \xz(2)$ we record here the particular relations
    \begin{alignat}{2}
        \label{L:eq:XinxRecall}
        -x \log \rho &= \tf12 \rt{\fz x}\brk{1+O((\logx)^{-A})},
            \qquad X &&= 2\rt{x/\fz}\brk{1+O((\logx)^{-A})}, \\
        \label{L:eq:X*inxRecall}
        -x \log \rho_* &= \tf12 \rt{\fz x}\brk{1+O((\logx)^{-A})},
            \qquad X_* &&= 2\rt{x/\fz}\brk{1+O((\logx)^{-A})}.
    \end{alignat}

As done in section \ref{sec:transference} we again use an ``arc transference'' to write
    \<
        \label{eq:pxxlNewInt}
        p(x,\xl) = \dint_{\rho e(\fA)} \Phi(z) z^{-x-1}\,dz + \dint_{\rho_* e(\fA_*)} \Phi(z) z^{-x-1} \, dz + \fT(\rho,\rho_*),
    \>
where again
    \[
        \fA := [0,\tf14)\cup[\tf34,1), \qquad \fA_* := [\tf14,\tf34),   
    \]
and
    \[
        \fT(\xr,\xr_*) := \dint_{i\xr}^{i\xr_*} \Phi(z)z^{-n-1} \,dz + \dint_{-i\xr_*}^{-i\xr} \Phi(z)z^{-n-1} \,dz.
    \]
Once again, the term $\fT(\xr,\xr_*)$ only contributes a small error term to formula \eqref{eq:pxxlNewInt}.

\begin{lemma}[cf.\ Lem.\ \ref{lem:Transfer}]
    \label{L:lem:Transfer}
    For $x > x_0$ let $\rho$ and $\rho_*$ be the unique solutions to equations \eqref{eq:sdp} and \eqref{eq:sdp*}, respectively. For all $\xe > 0$, as $x\to\infty$ one has
        \[
            \fT(\rho,\rho_*) \less \expp{(\tf34+\xe)\sqrt{\fz x}}.
        \]
\end{lemma}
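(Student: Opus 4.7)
The proof will follow the structure of Lemma \ref{lem:Transfer} mutatis mutandis, with one substantive difference flagged by Remark \ref{L:rem:VW}: since $W(4) = \tf1{16} \neq 0$ while $V(4) = 0$, the bound on $\Re\Psi(it)$ for $z = it$ on the imaginary axis is no longer negligible, accounting for the weaker exponent $\tf34$ versus $\tf12$ in the M\"obius setting. As in that proof, the antisymmetry $\fT(\rho,\rho_*) = -\fT(\rho_*,\rho)$ lets us assume $\rho \leq \rho_*$, so we parameterize $[i\rho,i\rho_*]$ by $z = it$ with $t \in [\rho,\rho_*]$.

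The first step is to bound $\int_\rho^{\rho_*} dt/t^{x+1} = (\rho_*^{-x} - \rho^{-x})/x$; combined with $-x\log\rho,\,-x\log\rho_* = \tf12\sqrt{\fz x}(1+o(1))$ from \eqref{L:eq:XinxRecall} and \eqref{L:eq:X*inxRecall}, this integral is $\less \exp((\tf12+\xe)\sqrt{\fz x})$. The second step is to bound $|\Phi(it)| = \exp\Re\Psi(it)$ via Lemma \ref{L:lem:Major}. Writing $t = e^{-1/X_t}$, we see $it = e(\tf14)\,e^{-1/X_t}$, so $it$ lies in $\fM(\tf14)$ with $\xb = 0$, and the lemma yields
\[
    \Re\Psi(it) = W(4)\,\fz X_t + O\!\big(X_t(\log X_t)^{-A}\big) = \tfrac{\fz X_t}{16}\big(1+o(1)\big).
\]
Since $X_t \leq X_*$ and $X_* = 2\sqrt{x/\fz}(1+o(1))$ by Theorem \ref{L:thm:Relations}, this gives $\Re\Psi(it) \leq (\tf18+\xe)\sqrt{\fz x}$ for large $x$.

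Multiplying the two estimates yields
\[
    \lf|\dint_{i\rho}^{i\rho_*} \Phi(z)z^{-x-1}\,dz\rh| \less \exp\!\big((\tf58+2\xe)\sqrt{\fz x}\big),
\]
which in fact beats the claimed bound since $\tf58 < \tf34$; the conjugate segment is handled identically with $a/q = \tf34$ (again yielding $q=4$ and the same constant $W(4)$), so the lemma follows upon rechoosing $\xe$. The only point requiring care is the uniform validity of Lemma \ref{L:lem:Major} along the segment, i.e., that $X_t \geq X_0(A)$ for all $t \in [\rho,\rho_*]$ and that the error is dominated uniformly; both are immediate since $X,X_* \to \infty$ with $x$. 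No genuine obstacle arises: unlike the M\"obius case in which $V(4) = 0$ killed the first-order term in $\Re\Psi(it)$, here that term survives but contributes only a modest $(\tf18+\xe)\sqrt{\fz x}$, safely below the $\sqrt{\fz x}$ growth of the main terms in \eqref{L:eq:pxxlForm}.
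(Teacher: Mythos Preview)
Your proof is correct and follows essentially the same route as the paper's: bound $\int_\rho^{\rho_*} t^{-x-1}\,dt$ via \eqref{L:eq:XinxRecall}--\eqref{L:eq:X*inxRecall}, then control $\Re\Psi(it)$ using Lemma~\ref{L:lem:Major} at $a/q=\tf14$ with $W(4)=\tf1{16}$, and combine. Your arithmetic is in fact a touch sharper than the paper's (you obtain $(\tf18+\xe)\sqrt{\fz x}$ for $\Re\Psi(it)$, whereas the paper crudely doubles $\tf{\fz}{16}X_*$ to $\tf{\fz}{8}X_*$ before substituting, arriving at $(\tf14+\xe)\sqrt{\fz x}$ and hence exactly the stated exponent $\tf34$); either way the lemma follows.
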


\begin{proof}
    Fixing $A,\xe > 0$ and letting $x > x_0(A)$, relations \eqref{L:eq:XinxRecall} and \eqref{L:eq:X*inxRecall} show that
        \<
            \label{L:eq:rhoInt}
            \int_{\rho}^{\rho_*} \frac{dt}{t^{x+1}}
            = \fr{1}{x}\lf[ e^{\fr12\sqrt{\fz x}+o(\sqrt{x})} - e^{\fr12\sqrt{\fz x}+o(\sqrt{x})} \rh]
            \less e^{(\fr12+\xe)\sqrt{\fz x}}.
        \>
    For $\rho \leq t \leq \rho_*$ define $X_t$ via $t = \exp(-1/X_t)$, and assume $x$ (and $\rho$) to be large enough that $X_t (\logX_t)^{-A}$ is increasing as a function of $t$ for $t \geq \rho$. As $W(4)=\tf{1}{16}$ in Lemma \ref{L:lem:Major}, for $\rho \leq t \leq \rho_*$ and sufficiently large $\xr$, $X_*$, and $x$, we have
        \<
            \label{L:eq:RePsit}
            \Re \Psi(it) 
            = \fr{\fz}{16} X_t + O(X_t(\logX_t)^A) 
            \leq \fr{\fz}{16} X_* + O(X_*(\logX_*)^{-A}) \leq \fr{\fz}{8} X_*.
        \>
    Using equation \eqref{L:eq:X*inxRecall}, we further have
        \<
            \label{L:eq:RePsitBB}
            \fr{\fz}{8} X_* = \tf14 \rt{\fz x} + O(\rt{x}(\logx)^{-A}) \leq (\tf14+\xe)\sqrt{\fz x},
        \>
    and the result then follows from \eqref{L:eq:rhoInt}--\eqref{L:eq:RePsitBB}, akin to the proof of Lemma \ref{lem:Transfer}.
\end{proof}

Due to the overwhelming similarities between Theorems \ref{M:thm:Relations} and \ref{L:thm:Relations} and between Lemmata \ref{lem:Transfer} and \ref{L:lem:Transfer}, and the parallels drawn in Remark \ref{L:rem:VW}, the remaining arguments of sections \ref{sec:transference}--\ref{sec:MAsymp} are naturally ported to analogous arguments concerning $p(x,\xl)$, $\Phi(z,\xl)$, etc.. As such, there is little to be gained by repeating said arguments, so we merely state the salient results concerning $p(x,\xl)$ and reference their respective $p(x,\mu)$-analogues.

\begin{lemma}[cf.\ Lem.\ \ref{M:lem:NonPrincIneq} and Prop.\ \ref{M:prop:pxmuReduc}]
    \label{L:lem:pxlReduc}
    For $x > x_0$ let $\xr$ and $\xr_*$ be the unique solutions to equations \eqref{eq:sdp} and \eqref{eq:sdp*}, respectively. For fixed $B>0$, as $x\to\infty$ one has
        \[
            \begin{aligned}
                p(x,\xl) &= \rho^{-x} \int_{\fP} \Phi(\rho e(\xa)) e(-x\xa) \,d\xa + \rho_*^{-x} \int_{\fP_*} \Phi(\rho e(\xa)) e(-x\xa) \,d\xa \\
                & \qquad + O(\rho^{-x}\Phi(\rho)x^{-B}) + (\rho_*^{-x}\Phi(-\rho_*)x^{-B}) + O(e^{(\fr34+\xe)\sqrt{\fz x}}).
            \end{aligned}
        \]
\end{lemma}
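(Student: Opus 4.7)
The plan is to follow the proof of Proposition~\ref{M:prop:pxmuReduc} essentially verbatim, using the Liouville analogs already established. First, I would combine Lemma~\ref{L:lem:Transfer} with the arc-transfer identity \eqref{eq:pxxlNewInt} to reduce to the two integrals
\[
\rho^{-x}\int_\fA \Phi(\rho e(\xa))e(-x\xa)\,d\xa \qquad\text{and}\qquad \rho_*^{-x}\int_{\fA_*}\Phi(\rho_* e(\xa))e(-x\xa)\,d\xa,
\]
modulo the error $O(e^{(\fr34+\xe)\sqrt{\fz x}})$. It then suffices to replace $\fA$ by $\fP$ in the first integral and $\fA_*$ by $\fP_*$ in the second at a cost of $O(\rho^{-x}\Phi(\rho)x^{-B})$ and $O(\rho_*^{-x}\Phi(-\rho_*)x^{-B})$, respectively.

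The key step is a Liouville version of Lemma~\ref{M:lem:NonPrincIneq}: for fixed $A>1$ and all sufficiently large $X$,
\[
\Re\Psi(\rho e(\xa)) \leq \lf(1-\fr{1}{\logX}\rh)\Psi(\rho) \qquad (\xa\in\fA\setminus\fP),
\]
and the analogous bound with $\Psi(-\rho_*)$ on $\fA_*\setminus\fP_*$. The proof parallels that of Lemma~\ref{M:lem:NonPrincIneq} with the choice $Q=X^{1/3}$ prescribed by Lemmata~\ref{L:lem:minor} and~\ref{L:lem:Major}. For $\xa$ in minor arcs this is immediate from Lemma~\ref{L:lem:minor}. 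For $\xa \in \fM(a/q)\cap\fA$ with $3\leq q\leq Q$, one invokes Lemma~\ref{L:lem:Major} together with Remark~\ref{L:rem:VW}: since $|\Wmaj(q)|\leq\tf{1}{16}$ is strictly below the leading value $\Wmaj(1)=\tf14$, the ratio $|\Wmaj(q)|/\Wmaj(1)$ provides ample slack. On the remaining pieces, namely $(\fM(\tf01)\cup\fM(\tf11))\setminus\fP$, one writes $\xa = a + \xb$ with $X^{-1}(\logX)^{-1/4} < |\xb| \leq Q/X$, bounds $\Delta := (1+4\pi^2X^2\xb^2)^{-1/2} \leq 1-(\logX)^{-1/2}$, and uses $A>1$ to absorb the error term of Lemma~\ref{L:lem:Major}.

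With the real-part inequality in hand, the remainder is routine: exponentiating yields $\Phi(\rho e(\xa)) \ll \exp(\Psi(\rho)-\Psi(\rho)/\logX)$ on $\fA\setminus\fP$, and Theorem~\ref{L:thm:Relations} gives $\Psi(\rho)\asymp\sqrt{x}$ and $\logX\asymp\logx$, whence
\[
\exp\lf(-\fr{\Psi(\rho)}{\logX}\rh) \ll \exp\lf(-\fr{\sqrt{x}}{\log x}\rh) \ll \exp(-x^{\nfr14}) \ll x^{-B}
\]
for any fixed $B>0$. Multiplying by $\rho^{-x}$ and integrating over $\fA\setminus\fP$ produces $O(\rho^{-x}\Phi(\rho)x^{-B})$, and the $\fA_*\setminus\fP_*$ portion is handled identically via $\Psi(-\rho_*)$. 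The main obstacle is conceptually mild but must be checked carefully: one must verify that $\Wmaj(q)<\Wmaj(1)$ strictly for every $q\ne 1$ arising in the major-arc decomposition of $\fA$, and similarly in the $\fA_*$ decomposition using $\Wmaj(2)=\tf14$ as the new leading value. Because $\Wmaj(4)=\tf{1}{16}$ is strictly below $\tf14$ (in contrast to the sharper $V(4)=0$ in the $\mu$ case), the same gap argument succeeds without any further modification.
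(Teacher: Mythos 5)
Your proposal is correct and follows exactly the route the paper intends: it ports the proofs of Lemma \ref{M:lem:NonPrincIneq} and Proposition \ref{M:prop:pxmuReduc} to the Liouville setting, substituting Lemmata \ref{L:lem:minor}, \ref{L:lem:Major}, \ref{L:lem:Transfer} and Theorem \ref{L:thm:Relations} for their M\"obius counterparts, with the constant gap $\Wmaj(q)\leq\tfrac{1}{16}<\tfrac14=\Wmaj(1)=\Wmaj(2)$ for $q\geq 3$ playing the role that $|\Vmaj(q)|\leq\tfrac15$ plays in the $\mu$ case. The paper gives no separate proof here precisely because this porting is routine, and your write-up correctly identifies the only point requiring a check (the strict gap at $q=4$, where $\Wmaj(4)=\tfrac1{16}$ rather than $\Vmaj(4)=0$).
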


\begin{lemma}[cf.\ Prop.\ \ref{M:prop:MainTerm} and eq.\ \eqref{M:eq:MainTerm*}]
    \label{L:lem:PrincInts}
    For all $x > x_0$ let $\rho$ and $\rho_*$ be the unique solutions to equations \eqref{eq:sdp} and \eqref{eq:sdp*}, respectively. As $x \to \infty$, one has
        \begin{align*}
            \rho^{-x} \int_{\fP} \Phi(\rho e(\xa)) e(-x\xa) \,d\xa 
                &= \frac{\rho^{-x}\Phi(\rho)}{\sqrt{2\pi \PsiD{2}(\rho)}}\lf[ 1 + O(x^{-\nfr15}) \rh], \\
            \rho_*^{-x} \int_{\fP_*} \Phi(\rho_* e(\xa)) e(-x\xa) \,d\xa 
                &= (-1)^{-x} \frac{\rho_*^{-x}\Phi(-\rho_*)}{\sqrt{2\pi \PsiD{2}(-\rho_*)}} \lf[ 1 + O(x^{-\nfr15}) \rh].
        \end{align*}
    where $(-1)^{-x} = \exp(-\pi ix)$.
\end{lemma}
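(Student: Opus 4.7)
The plan is to mirror the proof of Proposition \ref{M:prop:MainTerm} and the derivation of equation \eqref{M:eq:MainTerm*}, substituting Lemma \ref{L:lem:Princ} for Proposition \ref{M:prop:Princ} and Theorem \ref{L:thm:Relations} for Theorem \ref{M:thm:Relations} throughout. The key observation is that the relations in Theorem \ref{L:thm:Relations} have the same qualitative form as those in Theorem \ref{M:thm:Relations}, differing only by explicit factors of $\fz = \xz(2)$; in particular, $\PsiD{2}(\pm\rho) \asymp X^3 \asymp x^{3/2}$ and $\PsiD{3}(\pm\rho) \less X^4 \less x^2$, so the numerology controlling the error terms is essentially identical.

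For the integral over $\fP$, I would first record an analogue of Lemma \ref{lem:PsiDerivBB}: for fixed $A>0$ and all sufficiently large $X$, one has $|\PsiD{j}(\rho e(\xb))| \leq \sqrt{2}\,\PsiD{j}(\rho)$ for $0\leq j\leq 3$ and $|\xb|\leq\eta$, which follows directly from equation \eqref{eq:xlprc}. Next, writing $\Psi(\rho e(\xb)) = R(\xb)+iI(\xb)$ and Taylor-expanding $R$ and $I$ to third order, together with the relation $\partial_\xb^j[\Psi(\rho e(\xb))] = (2\pi i)^j \PsiD{j}(\rho e(\xb))$, yields the expansion
\[
    \Psi(\rho e(\xb)) = \Psi(\rho) + 2\pi i\xb\PsiD{1}(\rho) - 2\pi^2\xb^2\PsiD{2}(\rho) + \tf{8}{3}\pi^3 w(\xb)|\xb|^3\PsiD{3}(\rho),
\]
with $|w(\xb)|\leq 1$, exactly as in \eqref{eq:PsiTaylor}. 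Since $\PsiD{1}(\rho)=x$ by \eqref{eq:sdp}, the linear-in-$\xb$ term cancels $e(-x\xb)$, reducing the integral to $\rho^{-x}\int_{-\xh}^{\xh}\exp\xQ(\rho,\xb)\,d\xb$ with $\xQ$ defined by the analogue of \eqref{eq:xQ}. I would then split the range into $|\xb|<\xd$ and $\xd\leq|\xb|\leq\xh$, with $\xd = X^{-3/2}(\log X)^{3/2}$: for the outer range, the cubic error term is dominated by the quadratic and $\Re\xQ(\rho,\xb)\leq \Psi(\rho) - c(\log X)^3$, so the contribution is $\less \rho^{-x}\Phi(\rho)x^{-B}$ for any fixed $B$; for the inner range, Theorem \ref{L:thm:Relations} gives $|\xb|^3\PsiD{3}(\rho)\less x^{-1/5}$, so the cubic term may be pulled out of the exponential as a factor $1+O(x^{-1/5})$, leaving the Gaussian integral $\int_{-\xd}^{\xd}\exp(-2\pi^2\xb^2\PsiD{2}(\rho))\,d\xb = (2\pi\PsiD{2}(\rho))^{-1/2} + O(x^{-B})$. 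Combining these yields the stated formula for the $\fP$ integral.

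For the integral over $\fP_*$, I would use the identification $\rho_* e(\xa) = -\rho_* e(\xb)$ with $\xa = \tf12+\xb$, $|\xb|\leq \xh_*$, which gives $e(-x\xa) = e(-\tf12 x)e(-x\xb) = (-1)^{-x} e(-x\xb)$. Then, since $\PsiD{j}(-\rho_*)$ satisfies the same asymptotic estimates as $\PsiD{j}(\rho)$ by Theorem \ref{L:thm:Relations}, the Taylor expansion for $\Psi(-\rho_* e(\xb))$ takes the form of \eqref{eq:xQ*} with $\rho_*$ in place of $\rho$, and the equation $\PsiD{1}(-\rho_*)=x$ from \eqref{eq:sdp*} again cancels the linear term against $e(-x\xb)$. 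The remaining analysis is identical, mutatis mutandis, and produces the second formula.

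The main obstacle is essentially bookkeeping: verifying that every implicit constant tracked through the proof of Proposition \ref{M:prop:MainTerm} remains valid after replacing Theorem \ref{M:thm:Relations} with Theorem \ref{L:thm:Relations}. Since those theorems differ only by factors of $\fz^{(1-j)/2}$ in the $j$th derivative, these alterations affect neither the exponent $-\tf15$ in the error term nor the powers of $X$ and $x$ in the intermediate bounds. Accordingly no new analytical input is required beyond what Lemma \ref{L:lem:Princ} and Theorem \ref{L:thm:Relations} already supply.
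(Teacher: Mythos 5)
Your proposal is correct and follows exactly the route the paper intends: the paper's own justification of Lemma \ref{L:lem:PrincInts} is simply the remark that the arguments of Proposition \ref{M:prop:MainTerm} and equation \eqref{M:eq:MainTerm*} port over verbatim once Lemma \ref{L:lem:Princ} and Theorem \ref{L:thm:Relations} replace their M\"obius counterparts, and your write-up is a faithful elaboration of that porting, with the correct numerology ($\PsiD{2}(\pm\rho)\asymp X^3\asymp x^{3/2}$, $\PsiD{3}(\pm\rho)\ll X^4$, $\xd=X^{-3/2}(\log X)^{3/2}$) carried through unchanged. No gaps.
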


Combining Lemmata \ref{L:lem:pxlReduc} and \ref{L:lem:PrincInts} and repeating the arguments used in deriving equation \eqref{M:eq:pxmuAsymp2} from equation \eqref{M:eq:pxmuAsymp}, Theorem \ref{L:thm:pnlAsymp} is established.

\bibliography{spnmu.bib}
\end{document}